\documentclass[12pt]{amsart}
\usepackage{amssymb}
\usepackage{amsmath}
\usepackage{amscd}
\usepackage{amsthm}
\usepackage[centertags]{amsmath}
\usepackage{amsfonts}
\usepackage{newlfont}
\usepackage{graphicx}
\usepackage[usenames]{color}
\usepackage{amsfonts, amssymb}
\usepackage{mathrsfs}
\usepackage{latexsym}
\usepackage{tikz}\usepackage{verbatim}
\usepackage{tabulary,tabularx}
\usepackage[all]{xy}

\usepackage[latin1]{inputenc}

\newtheorem{theorem}{Theorem}[section]
\newtheorem{problem}[theorem]{Problem}

\newtheorem{proposition}[theorem]{Proposition}
\newtheorem{corollary}[theorem]{Corollary}
\newtheorem{lemma}[theorem]{Lemma}
\newtheorem{definition}[theorem]{Definition}
\theoremstyle{definition}
\newtheorem{remark}[theorem]{Remark}

\theoremstyle{definition}

\theoremstyle{remark}


\newcommand\Pp{\mathcal{P}}

\newcommand{\N}{\mathbb N}

\textwidth=17.4cm \textheight=23cm \hoffset=-20.5mm \voffset=-5mm
\parskip 7.2pt

\begin{document}
\title[The sup-norm vs. the norm of the coefficients]{The sup-norm vs. the norm of the coefficients: equivalence constants for homogeneous polynomials}

\author{Daniel Galicer \and Mart\'in Mansilla \and Santiago Muro}
\thanks{This work was partially supported by projects CONICET PIP 11220130100329CO,   ANPCyT PICT 2015-2224,  ANPCyT PICT 2015-2299, ANPCyT PICT 2015 - 3085, UBACyT 20020130100474BA, UBACyT 20020130300052BA, UBACyT 20020130300057BA. The second author was supported by a CONICET doctoral fellowship.}

\address{Daniel Galicer. Departamento de Matem\'{a}tica - Pab I,
	Facultad de Cs. Exactas y Naturales, Universidad de Buenos Aires
	(1428) Buenos Aires, Argentina and IMAS-CONICET} \email{dgalicer@dm.uba.ar} 

\address{Mart\'in Mansilla. Departamento de Matem\'{a}tica - Pab I,
	Facultad de Cs. Exactas y Naturales, Universidad de Buenos Aires
	(1428) Buenos Aires, Argentina and IMAS-CONICET} \email{mmansilla@dm.uba.ar}

\address{Santiago Muro. Departamento de Matem\'{a}tica - Pab I,
	Facultad de Cs. Exactas y Naturales, Universidad de Buenos Aires,
	(1428) Buenos Aires, Argentina and CIFASIS-CONICET} \email{muro@cifasis-conicet.gov.ar}

\keywords{Hardy-Littlewood inequalities, unimodular polynomials, unconditionality in spaces of polynomials, multivariable von Neumann's inequality}
\subjclass[2010]{46G25,15A60,47H60,11C08,15A69,47A30}

\begin{abstract}
Let $A_{p,r}^m(n)$ be the best constant that fulfills the following inequality: for every $m$-homogeneous polynomial $P(z) = \sum_{|\alpha|=m} a_{\alpha} z^{\alpha}$ in $n$ complex variables, $$\big( \sum_{|\alpha|=m} |a_{\alpha}|^{r} \big)^{1/r}  \leq  A_{p,r}^m(n) \sup_{z \in B_{\ell_p^n}} \big|P(z) \big| .$$
For every degree $m$, and a wide range of values of $ p,r \in [1,\infty]$ (including any $r$ in the case $p \in [1,2]$, and any $r$ and $p$ for the 2-homogeneous case), we give the correct asymptotic behavior of these constants as $n$ (the number of variables) tends to infinity.
Remarkably, in many cases, extremal polynomials for these inequalities are not (as traditionally expected) found using classical random unimodular polynomials, and special combinatorial configurations of monomials are needed. Namely, we show that Steiner polynomials (i.e., $m$-homogeneous polynomials such that the multi-indices corresponding to the nonzero coefficients form partial Steiner systems), do the work for certain range of values of $p,r$.

As a byproduct, we present some applications of these estimates to the interpolation of tensor products of Banach spaces, to the study of (mixed) unconditionality in spaces of polynomials and to the multivariable von Neumann's inequality.
\end{abstract}

\maketitle

\section{Introduction}
As usual we denote $\ell_{p}^{n}$ for the Banach space of all $n$-tuples $z=(z_1, \dots, z_n) \in \mathbb{C}^{n}$ endowed with the norm $\Vert (z_{1} , \ldots , z_{n}) \Vert_{p} = \Big( \sum_{i=1}^{n} \vert z_{i} \vert^{p} \Big)^{1/p}$ if $1 \leq p < \infty$, and $\Vert (z_{1} , \ldots , z_{n}) \Vert_{\infty} = \max_{i=1 , \ldots, n} \vert z_{i} \vert$ for $p = \infty$.  The unit ball of $\ell_{p}^{n}$ is denoted by $B_{\ell_p^n}$. For $1 \leq p \leq \infty$ we write $p'$ for its conjugate exponent (i.e., $\frac{1}{p}+\frac{1}{p'}=1$).

An $m$-homogeneous polynomial in $n$ variables is a function $P : \mathbb{C}^{n} \to \mathbb{C}$ of the form
\[
P(z_{1} , \ldots , z_{n}) = \sum_{\alpha \in \Lambda(m,n)}
a_{\alpha} z^{\alpha},
\]
 where $\Lambda(m,n):= \{ \alpha \in \mathbb{N}_0^n : |\alpha|:= \alpha_{1} + \cdots + \alpha_{n} = m \}$, $z^{\alpha}: = z_{1}^{\alpha_{1}} \cdots z_{n}^{\alpha_{n}}$ and $a_{\alpha} \in \mathbb{C}$.

Another way of writing a polynomial $P$ is as follows:
\[
P(z_{1} , \ldots , z_{n}) = \sum_{\substack{\mathbf j \in \mathcal J(m,n)}}
c_{\mathbf j} z_{\mathbf j},
\]
where  $\mathcal J(m,n):=\{ \mathbf j=(j_{1},\ldots , j_{k}) : 1 \leq j_{1} \leq \ldots \leq j_{k} \leq n \}$, $z_{\mathbf j}:=z_{j_{1}} \cdots z_{j_{k}}$ and $c_{\mathbf j} \in \mathbb{C}$. Note that $c_{\mathbf j}= a_{\alpha}$ with $\mathbf j=(1, \stackrel{\alpha_{1}}{\ldots}, 1, \ldots , n, \stackrel{\alpha_{m}}{\ldots} ,n)$.

We refer to the elements $(z^{\alpha})_{\alpha \in \Lambda(m,n)}$ (equivalently, $(z_{\mathbf j})_{\mathbf j \in \mathcal J(m,n)}$) as the monomials.

For $1 \leq p
\leq \infty$ we denote by $\mathcal{P} (^{m}\ell_{p}^{n} )$ the Banach space of all $m$-homogeneous
polynomials in $n$ complex variables equipped with the uniform (or sup) norm
\[
\Vert P \Vert_{\mathcal{P} (^{m}\ell_{p}^{n} )} := \sup_{z \in B_{\ell_p^n}} \big|P(z) \big|.
\]

Given an $m$-homogeneous polynomial $P(z) = \sum_{\alpha \in \Lambda(m,n)} a_{\alpha} z^{\alpha}$ in $n$  variables we denote the $\ell_r$-norm of its coefficients by $$\vert P \vert_r := \Big( \sum_{\alpha \in \Lambda(m,n)} |c_{\alpha}|^{r} \Big)^{1/r}.$$
Other norm, related to the coefficients is the so-called Bombieri $r$-norm defined in \cite{beauzamy1990products}:
$$[ P ]_r := \Big( \sum_{\alpha \in \Lambda(m,n)} \big(\frac{\alpha!}{m!}\big)^{r-1} |c_{\alpha}|^{r} \Big)^{1/r}.$$
The relation between the these coefficients-norms is given by the following inequalities (see \cite{beauzamy1990products}):
\begin{equation} \label{compbombieri}
(m!)^{\frac{1}{r}-1}  \vert P \vert_r  \leq [ P ]_r \leq \vert P \vert_r.
\end{equation}

In many contexts, it is essential to relate the summability of the coefficients of a given homogeneous polynomial with the sup-norm  on the unit ball of the ambient space.
This type of comparison results have shown several applications to different problems in complex and harmonic analysis, number theory and even in modern physics.
Among them we include the contributions to the study of: the asymptotic behavior of the Bohr radius \cite{defant2011bohnenblust,bayart2014bohr,defant2011bohr} or Dirichlet-Bohr radius \cite{carando2014dirichlet}, Sidon constants and convergence of Dirichlet series \cite{balasubramanian2006bohr,bayart2015monomial,bohnenblust1931absolute,defant2011bohnenblust,de2008ordre,konyagin2001translation,queffelec1995h}, monomial series expansions of holomorphic functions in infinitely many variables and multipliers of Dirichlet series \cite{bayart2014multipliers,bayart2015monomial}, classical inequalities in operator theory \cite{dixon1976neumann,mantero1979banach,galicer2015vonNeumann} and lower bounds on the classical bias obtainable in multiplayer XOR games \cite{montanaro2012some}.

Most of the applications mentioned above rely on the ingenious use of the celebrated Bohnenblust-Hille inequality (see \cite{bayart2014bohr,defant2011bohnenblust}). This inequality is a generalization to higher degrees of Littlewood's classical 4/3-inequality \cite{littlewood1930bounded} (a forerunner of Grothendieck's inequality)  and essentially bounds the $\ell_{\frac{2m}{m+1}}$-norm of the coefficients of an $m$-homogeneous polynomial in terms of its uniform norm on the polydisk. More precisely,
\begin{theorem}[Bohnenblust-Hille inequality] \label{BHineq}
There is a constant $C_{m,\infty}>0$ (which depends on $m$ but \emph{not on} $n$) such that, for every $m$-homogeneous polynomial $P$ in any number of complex variables $n$, we have:
\begin{equation}
\vert P \vert_{\frac{2m}{m+1}} \leq C_{m,\infty} \Vert P \Vert_{\mathcal{P} (^{m}\ell_{\infty}^{n} )}.
\end{equation}
\end{theorem}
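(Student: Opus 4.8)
The plan is to prove the statement first for $m$-linear forms and then transfer it to $m$-homogeneous polynomials, the transfer costing only a factor depending on $m$. Given $P(z)=\sum_{|\alpha|=m}a_\alpha z^\alpha$, let $L=L_P$ be the (unique symmetric) associated $m$-linear form, so $L(z,\dots,z)=P(z)$, and write $c_{\mathbf i}:=L(e_{i_1},\dots,e_{i_m})$ for $\mathbf i=(i_1,\dots,i_m)\in\{1,\dots,n\}^m$. If $\alpha(\mathbf i)$ denotes the multi-index obtained from $\mathbf i$ by collecting repeated entries, then by symmetry $\sum_{\alpha(\mathbf i)=\alpha}|c_{\mathbf i}|=\tfrac{m!}{\alpha!}|c_{\mathbf i_0}|=|a_\alpha|$ (for any $\mathbf i_0$ with $\alpha(\mathbf i_0)=\alpha$), so by Hölder's inequality $|a_\alpha|^r\le(m!)^{r-1}\sum_{\alpha(\mathbf i)=\alpha}|c_{\mathbf i}|^r$; summing over $\alpha$ gives $|P|_r\le(m!)^{1-1/r}\big(\sum_{\mathbf i}|c_{\mathbf i}|^r\big)^{1/r}$. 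On the other hand, the polarization inequality yields $\|L\|_{\mathcal L(^m\ell_\infty^n)}\le\tfrac{m^m}{m!}\,\|P\|_{\mathcal P(^m\ell_\infty^n)}$. Hence it suffices to show: for every $m$-linear form $T\colon\ell_\infty^n\times\cdots\times\ell_\infty^n\to\mathbb C$, with $c_{\mathbf i}:=T(e_{i_1},\dots,e_{i_m})$, one has $\big(\sum_{\mathbf i}|c_{\mathbf i}|^{\frac{2m}{m+1}}\big)^{\frac{m+1}{2m}}\le C_m\,\|T\|$ with $C_m$ independent of $n$.

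The multilinear estimate is obtained in two steps. \emph{Step 1 (Littlewood's mixed inequality).} For each fixed $k\in\{1,\dots,m\}$ one shows $\sum_{i_k=1}^n\big(\sum_{(i_j)_{j\ne k}}|c_{\mathbf i}|^2\big)^{1/2}\le(\sqrt2)^{\,m-1}\|T\|$: fix the $k$-th index and apply Khinchin's inequality successively in the remaining $m-1$ variables to bound $\big(\sum_{(i_j)_{j\ne k}}|c_{\mathbf i}|^2\big)^{1/2}$ by $(\sqrt2)^{m-1}$ times the average over sign vectors $\varepsilon^{(j)}\in\{-1,1\}^n$ $(j\ne k)$ of $\big|\sum_{(i_j)_{j\ne k}}c_{\mathbf i}\prod_{j\ne k}\varepsilon^{(j)}_{i_j}\big|$; then sum over $i_k$ and, for each fixed choice of signs, select unimodular scalars $(\lambda_{i_k})_{i_k}$ making each inner sum nonnegative, so that the resulting quantity equals $T$ evaluated at the sign vectors in the slots $j\ne k$ and at $(\lambda_{i_k})_{i_k}\in B_{\ell_\infty^n}$ in the $k$-th slot, hence is $\le\|T\|$. \emph{Step 2 (Blei-type combinatorial inequality).} For an arbitrary array $(c_{\mathbf i})_{\mathbf i\in\{1,\dots,n\}^m}$,
\[
\Big(\sum_{\mathbf i}|c_{\mathbf i}|^{\frac{2m}{m+1}}\Big)^{\frac{m+1}{2m}}\ \le\ \prod_{k=1}^m\Big(\sum_{i_k}\big(\sum_{(i_j)_{j\ne k}}|c_{\mathbf i}|^2\big)^{1/2}\Big)^{1/m}.
\]
Combining Steps 1 and 2 gives the bound with $C_m=(\sqrt2)^{m-1}$, and therefore one may take $C_{m,\infty}=(m!)^{\frac{m-1}{2m}}(\sqrt2)^{m-1}\tfrac{m^m}{m!}$, which depends on $m$ only.

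The delicate point is Step 2. For $m=2$ it follows at once from Minkowski's and Hölder's inequalities (this is precisely Littlewood's original route to the $4/3$-inequality), but for general $m$ it requires an induction on the degree: starting from the trivial bound by $\big(\sum_{\mathbf i}|c_{\mathbf i}|^2\big)^{1/2}$ and peeling off one variable at a time, at each stage one uses Minkowski's integral inequality together with Hölder's inequality to interpolate between a ``pure $\ell_2$'' norm and the mixed norms appearing in Step 1; the exponents produced telescope exactly to $\tfrac{2m}{m+1}$, and the only genuine work is keeping track of these exponents and of the order in which Hölder is applied. One could equally well bypass the isolated combinatorial statement and run the original Bohnenblust--Hille induction directly on the polynomial (or multilinear) inequality, but the technical heart is the same interpolation argument. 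The constant obtained in this way is far from optimal — much smaller, even subexponential, constants are known via hypercontractive estimates — but the crude bound above is all that is needed here.
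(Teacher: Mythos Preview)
The paper does not give its own proof of this statement: Theorem~\ref{BHineq} is quoted in the introduction as a known result, with references to \cite{bohnenblust1931absolute,defant2011bohnenblust,bayart2014bohr}, and is used later only as a black box (e.g.\ in the proof of case $(C)$ of Theorem~\ref{teorema con A}). So there is nothing to compare against.

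That said, your sketch is a faithful outline of the classical route: reduce to multilinear forms by polarization, establish the mixed $\ell_1(\ell_2)$ bound in each variable via iterated Khinchin, and then glue the $m$ mixed norms together through the Blei--Minkowski combinatorial inequality to reach the exponent $\tfrac{2m}{m+1}$. The reduction and the constants you write are correct, and your description of Step~2 as the ``delicate point'' is accurate; the induction with Minkowski and H\"older is exactly the standard argument. One small caveat: the mixed inequality you state in Step~1 is usually written with the $\ell_1$ and $\ell_2$ in the order $\big(\sum_{(i_j)_{j\ne k}}\big(\sum_{i_k}|c_{\mathbf i}|\big)^{2}\big)^{1/2}\le(\sqrt2)^{m-1}\|T\|$ rather than the form you wrote; either variant (or the symmetric combination of both) feeds into a Blei-type inequality, but you should double-check that the version of Blei's inequality you invoke matches the mixed norm you actually bounded. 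With that alignment, the argument goes through and yields a constant depending only on $m$, which is all the paper needs.
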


Moreover, the $\ell_{\frac{2m}{m+1}}$-norm of the coefficients on the left hand side is optimal: i.e., there is no similar inequality replacing this norm by  other $\ell_{r}$-norm, for $r < \frac{2m}{m+1}$, involving a constant independent of the number of variables. \\
There are a number of analogues/generalizations of this inequality (which maintain the philosophy that the constant involved is totally independent of the number of variables), and consist in replacing the sup-norm on the ball of $\ell_\infty^n$ on the right hand side of the Bohnenblust-Hille inequality by other $\ell_p^n$-uniform norms (obviously changing the summability condition on the left side). These generalizations, inspired by some classical inequalities for bilinear forms \cite{hardy1934bilinear}, are known today in the literature as Hardy-Littlewood inequalities for homogenous polynomials and they have been carefully studied during the last years \cite{albuquerque2013optimal,dimant2013summation,praciano1981bounded}.
Precisely,

\begin{theorem}[Hardy-Littlewood type inequalities] \label{HLineq}
There is a constant $C_{m,p}>0$ (only depending on $m$ and $p$ and \emph{independent of} $n$) such that for every $m$-homogeneous polynomial in $n$-complex variables we have:
\begin{align*}
(i) & &\vert P \vert_{\frac{p}{p-m}} & \leq C_{m,p} \;\Vert P \Vert_{\mathcal{P}(^m\ell_p^n)} \hspace{0.5cm}  \text{ for } m \leq p \leq 2m, \\
(ii) & & \vert P \vert_{\frac{2mp}{mp+p-2m}} & \leq C_{m,p} \;\Vert P \Vert_{\mathcal{P}(^m\ell_p^n)} \hspace{0.5cm}  \text{ for } 2m \leq p.
\end{align*}
\end{theorem}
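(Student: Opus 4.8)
The plan is to prove the Hardy-Littlewood inequalities by reducing everything to two cornerstone cases and then interpolating. The two cornerstones are: (a) the Bohnenblust-Hille inequality (Theorem~\ref{BHineq}), which corresponds to $p=\infty$ and gives control of $|P|_{2m/(m+1)}$ by $\|P\|_{\mathcal P(^m\ell_\infty^n)}$; and (b) the endpoint $p=2m$, where both parts $(i)$ and $(ii)$ predict $|P|_2 \le C_{m,2m}\|P\|_{\mathcal P(^m\ell_{2m}^n)}$. For the $\ell_2$-estimate at $p=2m$ I would use the multilinear approach: associate to $P$ its symmetric $m$-linear form $L$ with $P(z)=L(z,\dots,z)$, relate the coefficients $a_\alpha$ of $P$ to the entries of $L$ via the multinomial identities (losing only a factor depending on $m$, not on $n$, by the polarization formula and \eqref{compbombieri}), and prove the analogous multilinear statement $\big(\sum |c_{i_1\cdots i_m}|^2\big)^{1/2}\le C\,\|L\|_{\mathcal L(^m\ell_{2m}^n)}$. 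The latter follows by iterating the bilinear Hardy--Littlewood inequality of \cite{hardy1934bilinear} one variable at a time (a standard induction on $m$), or alternatively by a direct argument combining Hölder's inequality with the fact that evaluating $L$ against vectors $e_{i_k}$ in the appropriate coordinates is controlled by the $\ell_{2m}^n$-norm.

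Once these two anchor points are in place, part $(i)$ for $m\le p\le 2m$ and part $(ii)$ for $2m\le p$ follow by complex (or real) interpolation between the $p=\infty$ case (Bohnenblust--Hille) or the $p=m$ trivial case on one side and the $p=2m$ case on the other. Concretely, for $(i)$ one interpolates between $p=m$, where the inequality is essentially the triangle inequality giving $|P|_1\le \binom{n}{\le}\cdots$—more carefully, at $p=m$ one has $|P|_\infty\le\|P\|$ combined with the number of monomials, but the clean endpoint is rather that $|P|_1 \le \binom{m+n-1}{m}\|P\|_{\mathcal P(^m\ell_m^n)}$ is \emph{not} $n$-independent, so instead one should interpolate the exponents: writing $1/\big(\tfrac{p}{p-m}\big)$ as a convex combination, viewing the coefficient map $P\mapsto (a_\alpha)$ as a bounded operator into a vector-valued $\ell_r$ space and using the Riesz--Thorin / Stein interpolation theorem on the scale of $\ell_r$-norms of coefficients while the $\ell_p^n$-norm on the domain also moves. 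For $(ii)$, $2m\le p<\infty$, one interpolates between $p=2m$ (the $\ell_2$ case just established) and $p=\infty$ (Bohnenblust--Hille, $\ell_{2m/(m+1)}$), and a direct computation checks that the interpolated exponent is exactly $\tfrac{2mp}{mp+p-2m}$ and that $1/p$ sits at the right convex-combination parameter.

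The main obstacle I anticipate is making the interpolation rigorous at the level of the \emph{coefficient sequence}: one must set up a single bounded linear (or bilinear/multilinear) operator whose domain and target both depend on the interpolation parameter, so that the norm on $\mathcal P(^m\ell_p^n)$ and the $\ell_r$-norm of the coefficients vary consistently. The cleanest way is to fix the finite index set $\Lambda(m,n)$, regard $P\mapsto (a_\alpha)_\alpha$ as the identity on a fixed finite-dimensional space but with two different norms on each side, and invoke an interpolation theorem for families of norms; one then checks that $\big[\mathcal P(^m\ell_{p_0}^n),\mathcal P(^m\ell_{p_1}^n)\big]_\theta = \mathcal P(^m\ell_{p_\theta}^n)$ isometrically (this is where one uses that $[\ell_{p_0}^n,\ell_{p_1}^n]_\theta=\ell_{p_\theta}^n$ and that taking symmetric tensor powers commutes with interpolation up to constants), and similarly that the $\ell_r$-scale on coefficients interpolates. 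Keeping all multiplicative constants independent of $n$ throughout—especially the polarization constant and the constants coming from passing between $|P|_r$ and $[P]_r$ via \eqref{compbombieri}—is the delicate bookkeeping, but none of it introduces $n$-dependence, so the stated $n$-independence of $C_{m,p}$ is preserved.
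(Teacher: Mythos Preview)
The paper does not actually prove Theorem~\ref{HLineq}; it is quoted as a known result from \cite{albuquerque2013optimal,dimant2013summation,praciano1981bounded}. So there is no ``paper's own proof'' to compare against, only the literature.

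That said, your proposed strategy has a genuine gap. The heart of your argument is the claim that
\[
\big[\mathcal P(^m\ell_{p_0}^n),\,\mathcal P(^m\ell_{p_1}^n)\big]_\theta \;=\;\mathcal P(^m\ell_{p_\theta}^n)
\]
isometrically (or at least with constants independent of $n$), which you justify by saying ``taking symmetric tensor powers commutes with interpolation up to constants.'' But this is precisely the open \emph{Tensor Interpolation Problem} recorded later in the paper as Problem~\ref{interp compleja}: for $m>2$ it is \emph{not known} whether the natural inclusion
\[
\iota_\theta:\;\otimes_{i=1,\varepsilon}^m[\ell_p^n,\ell_q^n]_\theta\;\longrightarrow\;\big[\otimes_{i=1,\varepsilon}^m\ell_p^n,\,\otimes_{i=1,\varepsilon}^m\ell_q^n\big]_\theta
\]
has norm bounded independently of $n$ (for $1\le p,q\le2$). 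Identifying $\mathcal P(^m\ell_p^n)$ with the symmetric $\varepsilon$-tensor power of $\ell_{p'}^n$, the inequality you need to make your interpolation go through is exactly the boundedness of $\iota_\theta$; Kouba's theorem \cite{kouba1991interpolation} gives it only for $m=2$. The paper even uses an \emph{assumed} positive answer to this problem in Remark~\ref{remark con supuesto} to complete a different estimate---so this is not a step one can simply ``check.''

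Your treatment of part $(i)$ is also muddled: at the endpoint $p=m$ the target exponent $p/(p-m)$ degenerates to $\infty$, and $|P|_\infty\le C_m\|P\|_{\mathcal P(^m\ell_m^n)}$ \emph{is} $n$-free (Cauchy estimates), so that endpoint is fine; but you then need the same problematic interpolation of the domain. The actual proofs in \cite{dimant2013summation,albuquerque2013optimal,praciano1981bounded} avoid interpolating the spaces $\mathcal P(^m\ell_p^n)$ altogether: they work at the multilinear level and combine coordinate-wise H\"older/Minkowski arguments with Blei-type mixed-norm inequalities, iterating over the $m$ variables. If you want an interpolation-flavoured proof, you would have to interpolate something other than the sup-norm side (for instance, work with coefficient maps into mixed-norm spaces), not the polynomial spaces themselves.
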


Again the exponents $\frac{p}{p-m}$ and $\frac{2mp}{mp+p-2m}$ in the above inequalities are the best possible.
Observe that, in the limit case ($p = \infty$) we recover the classical Bohnenblust-Hille exponent $\frac{2m}{m+1}$.

If we change any of the parameters involved on either or both sides of these Hardy-Littlewood inequalities, it is expected that the dependence on the number of variables becomes apparent. It is worth asking how this reliance is in terms of the summability of the coefficients, the uniform norm and the homogeneity degree considered. \\
Analogously, we can study a similar problem: the inequality that comes from exchanging the roles (sides of the inequality) between the norm of the coefficients and the uniform norm.

\begin{problem}\label{problema}
Let $A_{p,r}^m(n)$ and $B_{r,p}^m(n)$ be the smallest constants that fulfill the following inequalities: for every $m$-homogeneous polynomial $P$ in $n$ complex variables,
\begin{align*}
\vert P \vert_{r}  & \leq  A_{p,r}^m(n) \;\Vert P \Vert_{\mathcal{P}(^m\ell_p^n)}, \\
\Vert P \Vert_{\mathcal{P}(^m\ell_p^n)} & \leq B_{r,p}^m(n) \;\vert P \vert_{r}.
\end{align*}
How these constants behave in terms of the number of variables $n$? Which is their exact asymptotic growth?
\end{problem}

Observe that by \eqref{compbombieri}, the depence on $n$ of the constant that appears when comparing the sup-norm with the Bombieri norm is exactly the same as the constants related to Problem~\ref{problema}.

In the 80's, Goldberg \cite{goldberg1987equivalence} settled a similar problem in the context of matrix theory: given an $n \times n$ matrix $A$, he was interested in finding the best equivalence constant $c(r,p,n)$ (or its asymptotic behavior as $n$ tends to infinity) which relates the $\ell_r$-norm of the coefficients with the operator norm of $A$ acting on $\ell_p^n$. Partial and sharp results of this problem (and also some variants of it)  were given by Feng and Tonge in \cite{tonge2000equivalence,feng2003equivalence,feng2007equivalence}. Observe that Problem~\ref{problema} is essentially a polynomial version of Golberg's problem. Of course, this can also be settled for multilinear forms, whose constants turn out to have the same asymptotic growth. Indeed, note that using the notation of \cite{defant2009domains}, $A^m_{p,r}(n)=\|id:\otimes_{\varepsilon_s}^{m,s} \ell_{p'}^n \to \otimes_{\Delta_r}^{m,s} \ell_{r}^n\|$. In \cite[Proposition 3.1.]{defant2009domains} it is proved that
$$
\|id:\otimes_{\varepsilon_s}^{m,s} \ell_{p'}^n \to \otimes_{\Delta_r}^{m,s} \ell_{r}^n\|\sim \|id:\otimes_{\varepsilon}^{m} \ell_{p'}^n \to \otimes_{\Delta_r}^{m} \ell_{r}^n\|.
$$
This implies that the asymptotic behaviour  of the polynomial and the multilinear
cases are the same. 
 We should mention that some partial results for the multilinear problem were recently obtained by Araujo and Pellegrino \cite{araujo2015optimal} (see also \cite{botelho2010complex} for the case $p=2$).

For every degree $m$ and a wide range of values of $p,r \in [1,\infty]$, we give in Theorem~\ref{teorema con A} the correct asymptotic behavior of $A_{p,r}^m(n)$ as $n$ tends to infinity respectively. We also present in Proposition~\ref{asint B} the asymptotic growth of $B_{r,p}^m(n)$ for every $p,r \in [1,\infty]$. We also use these results to tackle two different problems: we present some applications of our estimates to the study of unconditionality in spaces of polynomials and the multivariable von Neumann's inequality.

\begin{definition} \label{defmix}
Let $(P_i)_{i \in \Lambda}$ be a Schauder basis of $\mathcal{P}(^m \mathbb{C}^n)$. For $ 1 \leq  p,q \leq \infty$ and $ n,m \in \mathbb{N}$ let $ \chi_{p,q}((P_i)_{i \in \Lambda})$ be the best constant $C > 0$ such that $$ \| \displaystyle\sum_{ i \in \Lambda} \theta_i c_i  P_i \|_{\mathcal{P}(^m \ell_q^n)} \leq C \| \displaystyle\sum_{ i \in \Lambda}  c_i P_i \|_{\mathcal{P}(^m \ell_p^n)},  $$ for every $P = \displaystyle\sum_{ i \in \Lambda}  c_i P_i \in \mathcal{P}(^m \mathbb{C}^n)$ and every choice of complex numbers $(\theta_i)_{i \in \Lambda}$ of modulus one.

The $(p,q)$-mixed unconditional constant of $\mathcal{P}(^m \mathbb{C}^n)$ is defined as
$$\chi_{p,q}(\mathcal{P}(^m \mathbb{C}^n)) := \inf\{\chi_{p,q}((P_i)_{i \in \Lambda}) :  (P_i)_{i \in \Lambda} \mbox{ basis for } \mathcal{P}(^m \mathbb{C}^n)\}.$$
\end{definition}

This notion was introduced by Defant, Maestre and Prengel in \cite[Section 5]{defant2009domains}.

In Section~\ref{secuncond} we provide correct estimates of the asymptotic growth of the mixed-$(p,q)$ unconditional constant as $n$ tends to infinity.
To achieve this we use some major results given in \cite{defant2009domains,bayart2015monomial} about domains of monomial convergence, our bounds on Problem \ref{problema} and multilinear interpolation. Moreover, we give an analog of a result of Pisier and Sch\"utt \cite{pisier1978some,schutt1978unconditionality} in this context. Namely, in order to study the asymptotic behavior of the mixed unconditional constants of $\mathcal{P}(^m \mathbb{C}^n)$, it is enough to look at the monomials $(z_{\mathbf j})_{\mathbf j \in \mathcal J(m,n)}$.
More precisely, we prove in Theorem~\ref{incondicionalidadbasemonomial} that $$\chi_{p,q}(\mathcal{P}(^m \mathbb{C}^n)) \sim \chi_{p,q}\big((z_{\mathbf j})_{\mathbf j \in \mathcal J(m,n)} \big).$$
We feel this result is interesting in its own right.

Mantero and Tonge considered in \cite{mantero1979banach} several versions of the multivariable von Neumann's inequality restricted to homogeneous polynomial on several commuting operators.
Among them, they were interested in the asymptotic behavior of the best possible constant $c(n)=c_{m,p,q}(n)$ such that
\begin{align*}
 \|P(T_1,\dots,T_n)\|_{\mathcal L (\mathcal H)} \le c(n) \|P\|_{\mathcal P(^m\ell_q^n)},
\end{align*}
for every $n$-tuple $T_1,\dots,T_n$  of commuting operators on a Hilbert space $\mathcal{H}$ satisfying
\begin{align}
\sum_{i=1}^n\|T_i\|_{\mathcal L (\mathcal H)}^p\le 1,
\end{align}
and every $m$-homogeneous polynomial $P$ in $n$-complex variables.
We use our estimates on Hardy-Littlewood type inequalities to obtain new upper bounds for the behavior of $c(n)$, and also address a related problem also treated in \cite{mantero1979banach}.

Before we present and prove our main results, we give some important comments. To show that our estimates are sharp it is essential to have polynomials with small norm with many nonzero coefficients. A widely used technique  to find these extremal polynomials is given by the probabilistic method: i.e., considering an $m$-homogeneous polynomial whose coefficients are given by independent random variables and computing it expectation (pretending to be small).
The systematic study of norms of random homogeneous polynomials arguably goes back to the classical Kahane-Salem-Zygmund theorem
\cite[Chapter 6]{kahane1993some}, which was found very useful in harmonic analysis. Recently,
additional applications of random (homogeneous) polynomials were given to complex and
functional analysis and to operator theory (see for example \cite{mantero1980schur,boas2000majorant,defant2001unconditional,defant2003bohr,defant2004maximum,carando2007extension,bayart2012maximum}).

Bayart in \cite{bayart2012maximum} (see also \cite{boas2000majorant,defant2003bohr,defant2004maximum}) exhibited polynomials with unimodular coefficients and with small sup-norm on the unit ball of $\ell_p^n$.
He showed that for each $1 \leq p \leq \infty$ there exists an $m$-homogeneous unimodular polynomial $P(z):=  \sum_{\alpha \in \Lambda(m,n)} \varepsilon_{\alpha} z^{\alpha}$ (i.e., $\varepsilon_{\alpha} = \pm 1$ for every $\alpha$) in $n$ complex variables  and a constant $K_{m,p}$ that depends  exclusively on $m$ and $p$ such that

\begin{equation} \label{polinomios de Bayart}
\Vert P \Vert_{\mathcal{P} (^{m}\ell_{p}^{n} )} \leq K_{m,p} \times
\begin{cases}
  n^{1- \frac{1}{p}} & \text{ if } 1 \leq p \leq 2, \\
  n^{m(\frac{1}{2} - \frac{1}{p}) + \frac{1}{2}} & \text{ if } 2 \leq p \leq \infty. \\
\end{cases}
\end{equation}

Observe that the number of non-zero coefficients is exactly the number of possible monomials, $\binom{n+m-1}{m}$. These polynomials will be very useful: they will be extremal in many ranges of values of $p,r \in [1,\infty]$ for the first inequality of Problem~\ref{problema}. Unfortunately, for a large range of values of $ p $ and $ r $ these polynomials become useless and new extremal examples are needed.
Therefore, it is important to relax the
number of terms appearing in the polynomials, by allowing them to have some zero coefficients, in order to reduce quantitatively the value of the sup-norm.
Obviously if one gets rid of many coefficients/monomials this helps considerably to lower the value of the norm but the important thing is to maintain an appropriate balance (having a sufficient number of non-zero coefficients but keeping the norm small).

We show that so-called Steiner polynomials, a special class of tetrahedral polynomials introduced by Dixon in \cite{dixon1976neumann} and studied in  \cite{galicer2015vonNeumann}  are accurate enough for our purposes.
Beyond the results exhibited in this article, this shows once again that Steiner polynomials play an important role in the area.  We believe that this motivates to study in depth these polynomials and its possible future applications.

We need some definitions to describe them. An $S_p(t, m, n)$ \textit{partial Steiner system} is a collection of subsets of
size $m$ of
$\{1,\dots,n\}$ such that every subset of $t$ elements is contained in at most one member of the
collection of subsets of size $m$.
 An $m$-homogeneous polynomial $P$ of $n$ variables  is a \textit{Steiner polynomial } if
there exists an $S_p(t, m, n)$ partial Steiner system $\mathcal S$ such that
$P(z_{1} , \ldots , z_{n}) = \sum_{\mathbf j \in \mathcal S} c_{\mathbf j} z_{\mathbf j}$ and $c_\mathbf j=\pm1$.
Note that the monomials involved in this class  have a particular combinatorial configuration.

The following result appears in \cite[Theorem 2.5.]{galicer2015vonNeumann}.

\begin{theorem}\label{Steiner con norma chica}
Let $m\ge 2$  and $\mathcal S$ be an $S_p(m-1,m,n)$ partial Steiner system. Then there exist signs
$(c_{\mathbf j})_{\mathbf j \in
\mathcal S}$ and a constant $D_{m,p}>0$ independent of $n$  such that the
$m$-homogeneous polynomial $P = \sum_{\mathbf j \in\mathcal S} c_{\mathbf j} z_{\mathbf j}$ satisfies
\[
\Vert P \Vert_{\mathcal{P}(^m\ell_p^n)} \leq D_{m,p} \times \begin{cases}
\log^{\frac{3p-3}{p}}(n) & \text{ for } 1\le p\le 2, \\
\log^{\frac{3}{p}}(n) n^{m(\frac{1}{2} - \frac{1}{p})} & \text{ for } 2 \leq p <\infty.
\end{cases}
\]
Moreover, the constant $D_{m,p}$ may be taken independent of $m$ for $p \neq 2$.
\end{theorem}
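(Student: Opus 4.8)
The plan is probabilistic: take $(c_{\mathbf j})_{\mathbf j\in\mathcal S}$ to be independent Rademacher signs and show that $\mathbb{E}\,\|P\|_{\mathcal{P}(^m\ell_p^n)}$ already obeys the claimed bound for every $p$; since the relevant events have probability $>1/2$, one realization works simultaneously, and the bound for every $p$ will follow from the two endpoint cases $p=2$ and $p=\infty$ by interpolation, together with the \emph{deterministic} estimate $\|P\|_{\mathcal{P}(^m\ell_1^n)}\le\|z\|_1^m\le 1$ (which uses neither randomness nor the design, only $\sum_{\mathbf j\in\mathcal{J}(m,n)}|z_{\mathbf j}|\le\|z\|_1^m$). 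The combinatorial input of an $S_p(m-1,m,n)$ system is that any $(m-1)$-subset lies in at most one member of $\mathcal S$; equivalently, the links of $\mathcal S$ through distinct indices are pairwise disjoint families. This bounds $|\mathcal S|\le\frac1m\binom{n}{m-1}$ and, for every $z$, controls the variance proxy $\sigma(z)^2:=\sum_{\mathbf j\in\mathcal S}|z_{\mathbf j}|^2=\sum_{A\in\mathcal S}\prod_{a\in A}|z_a|^2$, since for fixed $z$ the variable $P(z)=\sum_{\mathbf j\in\mathcal S}\varepsilon_{\mathbf j}z_{\mathbf j}$ is sub-Gaussian with parameter $\sigma(z)$.

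For the polydisc ($p=\infty$) one has $\sigma(z)^2\le|\mathcal S|\lesssim_m n^{m-1}$ uniformly on $\mathbb{T}^n$, and a Kahane--Salem--Zygmund estimate — a union bound over a net of $\mathbb{T}^n$ (whose effective cardinality is exponential in $n$, contributing $\sqrt n$) combined with the Bernstein inequality $\|\partial_k P\|_{\infty}\le m\|P\|_{\infty}$ to pass from the net back to the supremum — yields $\mathbb{E}\,\|P\|_{\mathcal{P}(^m\ell_\infty^n)}\lesssim\sqrt n\cdot\sup_{\mathbb{T}^n}\sigma\lesssim_m n^{m/2}$, with a constant that does not grow with $m$. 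For $p=2$ the variance proxy is instead bounded, $\sigma(z)^2\le\sum_{\mathbf j\in\mathcal{J}(m,n)}|z_{\mathbf j}|^2\le\frac1{m!}\|z\|_2^{2m}\le\frac1{m!}$, but now $B_{\ell_2^n}$ has metric entropy linear in $n$, so a crude union bound only gives the useless $\mathbb{E}\,\|P\|\lesssim\sqrt n$. The gain must come from the sparseness of $\mathcal S$: decompose the coordinates of a test point $z$ into the $O(\log n)$ dyadic blocks $I_k=\{a:2^{-k-1}<|z_a|\le 2^{-k}\}$ (coordinates of size $\lesssim n^{-m}$ being negligible), use $\|z\|_2\le 1$ to force $|I_k|\lesssim 2^{2k}$, split $P$ into the corresponding block-pieces, estimate each sub-polynomial by sub-Gaussian concentration — here the Steiner property caps how many monomials survive in each block pattern — and reassemble. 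The supremum over $z$ is then governed not by an exponential net of $B_{\ell_2^n}$ but by the much smaller family of admissible block configurations, whose log-cardinality is only poly-logarithmic in $n$; the $\log^{3/2}n$ comes from the $O(\log n)$ scales together with the $\sqrt{\log}$ of sub-Gaussian concentration.

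It remains to fill in intermediate $p$ by interpolation. I would apply the three-lines theorem directly to the chosen polynomial $P$, along the analytic family $w\mapsto P(z(w))$ with $|z_a(w)|=|z_a|^{p((1-w)/p_0+w/p_1)}$ and the original phases of the $z_a$: this keeps the endpoints $\|P\|_{\mathcal{P}(^m\ell_{p_0}^n)}$ and $\|P\|_{\mathcal{P}(^m\ell_{p_1}^n)}$ with \emph{no polarization constant}, hence with no dependence on $m$. Taking $(p_0,p_1)=(1,2)$ and $\theta=2-2/p$ gives $\|P\|_{\mathcal{P}(^m\ell_p^n)}\le 1^{1-\theta}(\log^{3/2}n)^{\theta}\asymp\log^{(3p-3)/p}n$ for $1\le p\le2$; taking $(p_0,p_1)=(2,\infty)$ and $\theta=1-2/p$ gives $\|P\|_{\mathcal{P}(^m\ell_p^n)}\le(\log^{3/2}n)^{1-\theta}(n^{m/2})^{\theta}\asymp\log^{3/p}n\cdot n^{m(1/2-1/p)}$ for $2\le p<\infty$; in both ranges the implied constant is controlled by the $p=2$ and $p=\infty$ constants, which for $p\ne2$ are $m$-independent, giving the last assertion. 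The main obstacle is unquestionably the $p=2$ endpoint: turning the combinatorial sparseness of $\mathcal S$ into genuinely poly-logarithmic (rather than $\sqrt n$) growth of $\mathbb{E}\sup_{B_{\ell_2^n}}|P|$. This is precisely where the hypothesis that every $(m-1)$-configuration extends to at most one member of $\mathcal S$ is indispensable, since it caps the number of monomials compatible with each dyadic scale pattern and thereby shrinks the relevant covering numbers from exponential to poly-logarithmic in $n$. A secondary difficulty is reassembling the $O((\log n)^m)$ dyadic pieces without a lossy triangle inequality, and verifying that no probabilistic or combinatorial constant blows up with $m$ when $p\ne2$.
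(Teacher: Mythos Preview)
This theorem is not proved in the present paper; it is simply quoted from \cite[Theorem~2.5]{galicer2015vonNeumann}, so there is no argument here against which to compare your proposal. That said, your overall architecture---independent Rademacher signs, endpoint estimates at $p\in\{1,2,\infty\}$, and deterministic interpolation via the log-convexity $\|P\|_{\mathcal P(^m\ell_p^n)}\le\|P\|_{\mathcal P(^m\ell_{p_0}^n)}^{1-\theta}\|P\|_{\mathcal P(^m\ell_{p_1}^n)}^{\theta}$ for a \emph{fixed} polynomial---is exactly the route taken in the cited reference, and your treatment of the $p=1$ (deterministic) and $p=\infty$ (Kahane--Salem--Zygmund with $\sigma^2\le|\mathcal S|\lesssim n^{m-1}$) endpoints is correct.

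The substantive gap is at $p=2$. Your entropy claim is not right as worded: the ``family of admissible block configurations'' (assignments of indices to dyadic levels $I_k$) has log-cardinality of order $n$, not $\mathrm{polylog}\,n$, so a union bound over configurations still costs $\sqrt n$. What one can legitimately union over is the set of scale \emph{vectors} $\mathbf k=(k_1,\dots,k_m)$, of which there are only $O((\log n)^m)$; the hard step you have glossed over is to bound, for each fixed $\mathbf k$, the supremum of $|T(z^{(k_1)},\dots,z^{(k_m)})|$ \emph{uniformly} over all $z$ compatible with that scale vector. This cannot be done by sub-Gaussian concentration alone (the index sets $I_{k_i}(z)$ vary with $z$). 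In \cite{galicer2015vonNeumann} this is handled by rescaling each block to the $\ell_\infty$ ball of the appropriate dimension and invoking the $\ell_\infty$ KSZ estimate there; the Steiner hypothesis enters to control the number of surviving monomials once $m-1$ of the block indices are fixed, which is what makes the rescaled $\ell_\infty$ bound summable over $\mathbf k$. Your sketch gestures at this but conflates the two levels of the argument.

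A secondary issue: your interpolation inherits the $p=2$ constant with exponent $\theta>0$ at every intermediate $p$, so the ``moreover'' clause on $m$-independence for $p\neq 2$ does not follow from your argument as written; obtaining a constant independent of $m$ away from $p=2$ requires keeping track of the $m$-dependence block by block rather than passing through the $p=2$ endpoint.
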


The last ingredient we need for the applications  is the existence of nearly
optimal partial Steiner systems, in the sense that they have many elements. This translates to many unimodular
coefficients of the Steiner polynomials. It is well known that any partial Steiner system $S_p(m-1,m,n)$ has
cardinality less than or equal to $\frac{1}{m} \binom{n}{m-1}$.

R\"odl \cite{Rod85} in the eighties proved that there exist partial Steiner systems $S_p(m-1,m,n)$ of cardinality at least $(1-o(1))\frac{1}{m}\binom{n}{m-1}$, where
$o(1)$ tends to zero as $n$ goes to infinity. Taking partial Steiner systems of this cardinality in Theorem~\ref{Steiner con norma chica} we have the
following.
\begin{corollary}\label{corosteiner}
Let $m\ge 2$. Then there exists a $m$-homogeneous Steiner
unimodular polynomial $P$ of $n$ complex variables with at least $C_m n^{m-1}$ unimodular coefficients satisfying the estimates in Theorem~\ref{Steiner con norma chica}, where $C_m$ is a constant that depends only on $m$.
\end{corollary}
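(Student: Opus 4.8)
The plan is to obtain $P$ by feeding an asymptotically extremal partial Steiner system into Theorem~\ref{Steiner con norma chica}; since all the analytic content needed is already packaged in that theorem, what remains is essentially a counting argument together with a bookkeeping step to cover small values of $n$.

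First I would apply R\"odl's theorem to fix, for each $n$, a partial Steiner system $\mathcal S=\mathcal S_n$ of type $S_p(m-1,m,n)$ with $\#\mathcal S\ge(1-o(1))\frac1m\binom{n}{m-1}$. Using $\binom{n}{m-1}=\frac{n(n-1)\cdots(n-m+2)}{(m-1)!}\ge\frac{(n-m+2)^{m-1}}{(m-1)!}$, there is a threshold $n_0=n_0(m)$ so that for $n\ge n_0$ one has both $n-m+2\ge n/2$ and $1-o(1)\ge\frac12$, whence $\#\mathcal S\ge\frac{1}{2m(m-1)!\,2^{m-1}}\,n^{m-1}=:\widetilde C_m\,n^{m-1}$. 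Then Theorem~\ref{Steiner con norma chica} applied to this $\mathcal S$ produces signs $(c_{\mathbf j})_{\mathbf j\in\mathcal S}$ for which $P:=\sum_{\mathbf j\in\mathcal S}c_{\mathbf j}z_{\mathbf j}$ obeys the stated sup-norm bounds; by construction $P$ is a Steiner polynomial whose number of (unimodular) coefficients equals $\#\mathcal S\ge\widetilde C_m\, n^{m-1}$.

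It remains only to absorb the finitely many $n$ with $m\le n<n_0$ into the constant. For such $n$ pick any $\mathbf j\in\mathcal J(m,n)$ with pairwise distinct entries (possible since $n\ge m$); the singleton $\{\mathbf j\}$ is a partial $S_p(m-1,m,n)$ system and $\Vert z_{\mathbf j}\Vert_{\mathcal P(^m\ell_p^n)}\le1$, so $z_{\mathbf j}$ trivially satisfies the estimates of Theorem~\ref{Steiner con norma chica}. Taking $C_m:=\min\bigl\{\widetilde C_m,\ \min_{m\le n<n_0}n^{1-m}\bigr\}>0$ then yields a valid example with at least $C_m\, n^{m-1}$ unimodular coefficients for every $n\ge m$.

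I do not anticipate a genuine obstacle: R\"odl's theorem and Theorem~\ref{Steiner con norma chica} carry all the weight, and the only steps needing (routine) care are cleanly absorbing the $o(1)$ in R\"odl's estimate and choosing $C_m$ small enough to also handle the small-$n$ range.
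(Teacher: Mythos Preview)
Your proposal is correct and follows exactly the approach indicated in the paper, which simply combines R\"odl's existence result with Theorem~\ref{Steiner con norma chica}. Your extra bookkeeping (explicitly bounding $\binom{n}{m-1}$ below and absorbing small $n$ into $C_m$) is more careful than the paper, which leaves the corollary without a written proof.
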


 The article is organized as follows. In Section~\ref{Mainresult} we address Problem~\ref{problema}.
Section~\ref{secuncond} deals with the study of (mixed) unconditionality in spaces of polynomials. Finally, Section~\ref{applicvn} presents some applications to several versions of the multivariable von Neumann's inequality.

\section{Main results} \label{Mainresult}

If $(a_{n})_{n}$ and $(b_{n})_{n}$ are two sequences of real numbers we will write $a_{n} \ll b_{n}$ if there
exists a constant $C>0$ (independent of $n$) such that $a_{n} \leq C b_{n}$ for every $n$.
We will write $a_{n} \sim b_{n}$ if $a_{n} \ll b_{n}$ and $b_{n} \ll a_{n}$.
Recall that the number of $m$-homogeneous monomials in $n$ variables is $|\mathcal{J}(m,n)|=\binom {n + m -1} {m} \sim n^m$.

For every $P \in \mathcal{P}(^m \mathbb{C}^n)$ there exists a unique symmetric $m$-linear form $T$ such that for every $x \in \mathbb{C}^n$, $P(x)= T(x, \overset{m}{\ldots}, x)$ (see \cite{dineen1999complex}). We will denote the $r$-th coefficients norm of $T$ by $|T|_r$, that is,
$$
|T|_r := \Big( \displaystyle\sum_{ i \in \mathcal{M}(m,n)} \vert T(e_{i_1}, \ldots, e_{i_m}) \vert^r \Big)^\frac{1}{r},
$$
where  $\mathcal{M}(m,n)=\{\mathbf i=(i_{1},\ldots ,i_{m}) \,:\, 1 \leq i_{l} \leq n, \, 1\le l\le m \}$.
It is well known that there exist constants $ C_l=C_l(m)>0$, $l=1,2,$ independent of $n$, such that for every $P \in \mathcal{P}(^m \mathbb{C}^n)$ and its associated symmetric $m$-linear form $T$ we have
\begin{align}
 \vert T \vert_r & \leq  \vert P \vert_r   \leq  C_1  \vert T \vert_r & \text{ for } 1 \leq r \leq \infty, \label{P vs T coef}\\
C_2 \| T \|_{\mathcal{L}(^m \ell_p^n)} & \leq  \Vert P \Vert_{\mathcal{P}(^m \ell_p^n)}   \leq   \Vert T \Vert_{\mathcal{L}(^m \ell_p^n)} & \text{ for } 1 \leq p \leq \infty. \label{P vs T unif}
\end{align}

We now state our main theorem.

\begin{theorem} \label{teorema con A}
Let $A_{p,r}^m(n)$ be the smallest constant such that for every $m$-homogeneous polynomial $P$ in $n$ complex variables, $$  \vert P \vert_r  \leq  A_{p,r}^m(n) \;\Vert P \Vert_{\mathcal{P}(^m\ell_p^n)}.$$
Then,
\[
\begin{cases}  \; A_{p,r}^m(n) \sim 1 & \text{ for } \; (A): \; [\frac{1}{2}\le\frac1{r}\le \frac{m+1}{2m}-\frac1{p} ] \text{ or } [\frac{1}{r} \leq \frac{1}{2} \; \wedge \; \frac{m}{p} \leq 1 - \frac{1}{r} ], \\

\;A_{p,r}^m(n) \sim n^{\frac{m}{p}+\frac1{r}-1} & \text{ for } \; (B): \; [\frac{1}{2m} \leq \frac{1}{p} \leq \frac{1}{m} \; \wedge \; -\frac{m}{p} + 1 \leq \frac{1}{r} ], \\

\;A_{p,r}^m(n) \sim n^{m (\frac{1}{p}+\frac{1}{r}-\frac{1}{2}) - \frac{1}{2}} & \text{ for } \; (C): \; [ \frac{m+1}{2m} \leq \frac{1}{r} \; \wedge \; \frac{1}{p} \leq \frac{1}{2} ]  \text{ or } [ \frac{1}{2} \leq \frac{1}{r} \leq \frac{m+1}{2m} \leq \frac{1}{p} + \frac{1}{r} \wedge \; \frac{1}{p} \leq \frac{1}{2}], \\
\;A_{p,r}^m(n) \sim n^{\frac{m}{r} + \frac1{p}  -1} & \text{ for } \; (D): \; [ \frac{1}{2} \leq \frac{1}{p} \; \wedge \; 1-\frac{1}{p} \leq \frac{1}{r} ], \\
\;A_{p,r}^m(n) \ll n^{\frac{m-1}{r}} & \text{ for } \; (E): \; [ \frac{1}{2} \leq \frac{1}{p} \leq 1 - \frac{1}{r} ], \\

\;A_{p,r}^m(n) \sim n^{\frac{1}{r}} & \text{ for } \; (F): \; [ \frac{m-1}{p} \leq 1 - \frac{1}{r} \; \wedge \; \frac{1}{m} \leq \frac{1}{p} \leq \frac{1}{m-1} ], \\

\end{cases}
\]
Moreover, the power of $n$ in $(E)$ cannot be improved.
\end{theorem}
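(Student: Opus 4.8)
The plan is to prove the upper and the lower estimates for $A^m_{p,r}(n)$ separately in each of the six regimes. By the norm equivalences \eqref{P vs T coef} and \eqref{P vs T unif} one may work throughout with the symmetric $m$-linear form $T$ associated to $P$ and with the quantities $|T|_r$ and $\|T\|_{\mathcal L(^m\ell_p^n)}$; since in each regime the exponent of $n$ is an affine function of $(1/p,1/r)$ and the corresponding region is convex in these coordinates, it is enough to establish each bound at the extreme points of the region and then propagate it by interpolation.

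For the \emph{upper} bounds the ingredients are: (i) the Bohnenblust--Hille and Hardy--Littlewood inequalities (Theorems~\ref{BHineq} and \ref{HLineq}), together with their optimal \emph{anisotropic} (mixed-exponent) versions --- collapsing a mixed $(s_1,\dots,s_m)$-summability bound to the isotropic $\ell_r$-norm by Hölder applied coordinate-block by coordinate-block, with the $s_k$ chosen to maximise $\sum_k 1/s_k$, produces the correct power of $n$, whereas a single Hölder step over all $\binom{n+m-1}{m}\sim n^m$ monomials is too wasteful; (ii) a Hilbertian estimate for $p=2$: iterating the flattening inequality (a bilinear form on $\ell_2^n\times\ell_2^n$ has $|T|_2\le n^{1/2}\|T\|_{\mathcal L(^2\ell_2^n)}$) gives $|T|_2\le n^{(m-1)/2}\|T\|_{\mathcal L(^m\ell_2^n)}$, and interpolating this with the trivial $|T|_\infty\le\|T\|_{\mathcal L(^m\ell_2^n)}$ via the log-convexity of $r\mapsto|T|_r$ yields $|T|_r\le n^{(m-1)/r}\|T\|_{\mathcal L(^m\ell_2^n)}$ for $r\ge 2$; (iii) an elementary duality/lifting estimate valid for every $p$: freezing $m-1$ arguments of $T$ one has $\|T(\cdot,e_{i_2},\dots,e_{i_m})\|_{\ell_{p'}}=\|T(\cdot,e_{i_2},\dots,e_{i_m})\|_{(\ell_p^n)^*}\le\|T\|_{\mathcal L(^m\ell_p^n)}$, and summing over the remaining $n^{m-1}$ indices gives $|T|_{p'}\le n^{(m-1)/p'}\|T\|_{\mathcal L(^m\ell_p^n)}$; (iv) Hölder in the coefficient norm to pass between two exponents $r$ at a fixed $p$; and (v) multilinear complex interpolation in $p$ (the log-convexity of the relevant tensor norms, as in \cite{defant2009domains}). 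With these tools one checks the bound at the corners: in regime (A) one only needs \ref{BHineq}/\ref{HLineq} and the monotonicity $|P|_r\le|P|_s$; in (C) and (D) the corners are handled by \ref{BHineq}, the Hilbert--Schmidt estimate (ii) and Hölder (iv), then interpolated in $p$ between $p=2$ and $p=\infty$; in (E) the decisive corners are $p=r'$ (estimate (iii), which gives precisely $n^{(m-1)/r}$ because $(r')'=r$) and $p=2$ with $r\ge2$ (estimate (ii)).

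For the \emph{lower} bounds and the sharpness claims one exhibits explicit extremal polynomials. The monomial $z_1^m$ gives $A^m_{p,r}(n)\gg 1$ (regime (A)). For $P=\sum_{i=1}^n z_i^m$ one has $|P|_r=n^{1/r}$ and $\|P\|_{\mathcal P(^m\ell_p^n)}\sim n^{\max(0,\,1-m/p)}$, hence $A^m_{p,r}(n)\gg n^{1/r-\max(0,1-m/p)}$, matching the exponents in (F) (for $m-1\le p\le m$) and (B) (for $m\le p\le 2m$). Bayart's unimodular polynomials \eqref{polinomios de Bayart}, which carry $\sim n^m$ coefficients of modulus one, give $A^m_{p,r}(n)\gg n^{m/r}/\|P\|_{\mathcal P(^m\ell_p^n)}$, i.e.\ $n^{m/r+1/p-1}$ for $p\le 2$ (regime (D)) and $n^{m(1/p+1/r-1/2)-1/2}$ for $p\ge 2$ (regime (C)). Finally, the ``Moreover'' in regime (E): by Corollary~\ref{corosteiner} there is an $m$-homogeneous Steiner unimodular polynomial with $\gg n^{m-1}$ coefficients of modulus one and, for $1\le p\le 2$, $\|P\|_{\mathcal P(^m\ell_p^n)}\ll\log^{c_p}(n)$, so $|P|_r\gg n^{(m-1)/r}$ and $A^m_{p,r}(n)\gg n^{(m-1)/r}/\log^{c_p}(n)$; thus the power $(m-1)/r$ cannot be lowered. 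In the interior of (E) Bayart's polynomial only yields $n^{m/r+1/p-1}<n^{(m-1)/r}$, so a random unimodular polynomial is genuinely insufficient and the combinatorial Steiner configuration is needed --- precisely the phenomenon advertised in the introduction.

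The hard part will be the \emph{sharp} upper bound when $p$ is comparable to the degree $m$ (regimes (B) and (F)): there the optimal Hardy--Littlewood exponent is far from $r$, a naive single Hölder step overshoots by a genuine power of $n$, and the Hilbert--Schmidt and lifting estimates (ii)--(iii) are themselves lossy once $m\ge 3$. One must instead invoke the optimal anisotropic Hardy--Littlewood inequalities in the range $m\le p\le 2m$ and, typically, split $P$ into its tetrahedral part and its lower part (the latter supported on $\ll n^{m-1}$ monomials and accessible by induction on $m$), tracking constants carefully so that the resulting exponent is exactly $m/p+1/r-1$ (resp.\ $1/r$) at the corners of (B) (resp.\ (F)). Verifying this bookkeeping is the technical core of the theorem.
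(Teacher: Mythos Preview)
Your lower-bound constructions are essentially correct (your use of $\sum_{i=1}^n z_i^m$ in place of the paper's block polynomial $\sum_j z_{mj+1}\cdots z_{mj+m}$ works just as well for $(B)$ and $(F)$), and the Steiner argument for the sharpness in $(E)$ is exactly the right idea.

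The upper bounds, however, contain a genuine gap. Your ingredient (v), ``multilinear complex interpolation in $p$'', is precisely the open Tensor Interpolation Problem stated as Problem~\ref{interp compleja}: for $m\ge 3$ it is \emph{not known} whether $[\otimes^m_\varepsilon\ell_{p_0'}^n,\otimes^m_\varepsilon\ell_{p_1'}^n]_\theta$ coincides (uniformly in $n$) with $\otimes^m_\varepsilon\ell_{p'}^n$, and this is exactly what would be needed to interpolate $A^m_{p,r}(n)=\|id:\mathcal P(^m\ell_p^n)\to(\mathcal P,|\cdot|_r)\|$ in the variable $p$. The paper invokes this hypothesis only \emph{conditionally}, in Remark~\ref{remark con supuesto}; the proof of the theorem avoids it altogether. (Incidentally, for $(D)$ you propose interpolating between $p=2$ and $p=\infty$, but region $(D)$ lies in $p\le 2$.)

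The paper's arguments are in fact much more direct than your outline suggests, and none of them requires interpolation in $p$. For $(C)$ one applies Bohnenblust--Hille (or Hardy--Littlewood at a shifted exponent $q\ge p$) together with the elementary comparison $\|P\|_{\mathcal P(^m\ell_q^n)}\le n^{m(1/p-1/q)}\|P\|_{\mathcal P(^m\ell_p^n)}$. For $(D)$ one applies H\"older in the last index (passing from $\ell_r$ to $\ell_{p'}$ at cost $n^{1/r+1/p-1}$) and then bounds the remaining $n^{m-1}$ summands by $\sup$. For $(B)$ the ``hard'' step evaporates: take $q=mr/(r-1)\ge p$, apply Theorem~\ref{HLineq}\,(i) at $q$, and use the same norm comparison. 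For $(F)$ the key observation you are missing is that the hypothesis $(m-1)/p\le 1-1/r$ places $(p,r)$ in region $(A)$ \emph{for degree $m-1$}, so $A^{m-1}_{p,r}(n)\sim 1$; writing $|T|_r^r=\sum_{i=1}^n|T(e_i,\cdot,\dots,\cdot)|_r^r\ll\sum_{i=1}^n\|T(e_i,\cdot,\dots,\cdot)\|_{\mathcal L(^{m-1}\ell_p^n)}^r\le n\|T\|^r$ gives $A^m_{p,r}(n)\ll n^{1/r}$ in one line. No anisotropic Hardy--Littlewood, no tetrahedral/lower splitting, and no delicate bookkeeping are needed. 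The only interpolation actually used is in $r$ (for $(E)$), where the coefficient spaces $(\mathcal P,|\cdot|_r)$ interpolate trivially.
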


The first figure represents the regions described in Theorem~\ref{teorema con A}. For the blank region we do not know right order of $A_{p,r}^m(n)$ (see the comments after  Remark~\ref{resp interp compleja} below).
It is noteworthy that much of the work is to determine \emph{which are the regions} to consider.

\begin{figure}\label{figure H-L1}
\begin{center}
\begin{tikzpicture}[scale=1.15]

	\draw (0.6,2.4) node {$(A)$};
  \path[draw, shade, top color=yellow, bottom color=yellow, opacity=.3]
     (1,4) node[below] {$ $}  -- (2, 0) -- (0, 0) -- (0,5) -- cycle;

	\draw (2,6) node {$(C)$};
  \path[draw, shade, left color=yellow, right color=red, opacity=0.3]
    (0,5) node[left] {$ $} -- (1,4) node[below] {$ $}
    -- (4, 4) -- (4,8) -- (0, 8) -- cycle;

	\draw (6,5) node {$(D)$};
  \path[draw, shade, bottom color=yellow, top color=blue, opacity=.3]
    (4,8) node[below] {$ $} -- (8,8) node[below] {$ $}
    -- (8, 0) -- (4, 4) -- cycle;

	\draw (5.5,1.5) node {$(E)$};
  \path[draw, shade, top color=blue, bottom color=yellow, opacity=.3]
    (4,4) node[below] {$ $} -- (8,0) node[below] {$ $}
     -- (4, 0) -- cycle;

	\draw (1.6,3) node {${(B)}$};
  \path[draw, shade, top color=violet, bottom color=yellow, opacity=.3]
    (2,4) node[left] {$ $} -- (1,4) node[below] {$ $}
     -- (2, 0)  -- cycle;

	\draw (2.25,0.4) node {${(F)}$};
  \path[draw, shade, left color=green, bottom color=blue, opacity=.3]
    (2,2) node[left] {$ $} -- (2.66,0) node[below] {$ $}
     -- (2, 0)  -- cycle;

\draw[dotted] (2,2) -- (0,2); 
\draw[dotted] (0,4) -- (8,4);
\draw[dotted] (1,4) -- (1,0);

\draw (0,4) node[left] {$\frac12$};
\draw (4,0) node[below] {$\frac12$};
\draw (2.66,0) node[below] {$\frac1{m-1}$};
\draw (2, 0) node[below] {$\frac1{m} $};
\draw (0,2) node[left] {$\frac1{m} $};
\draw (1, 0) node[below] {$\frac1{2m} $};
\draw (0,5) node[left] {$\frac{m+1}{2m}$};

\draw (8.1, 0) node[below] {$\frac1{p} $};
  \draw[->] (-0.3,0) -- (8.3, 0);
\draw (0,8.1) node[left] {$\frac1{r} $};
  \draw[->] (0,-0.3) -- (0, 8.3);
 \end{tikzpicture}
\end{center}

\caption{Graphical overview of the regions described in Theorem~\ref{teorema con A}. }

\end{figure}
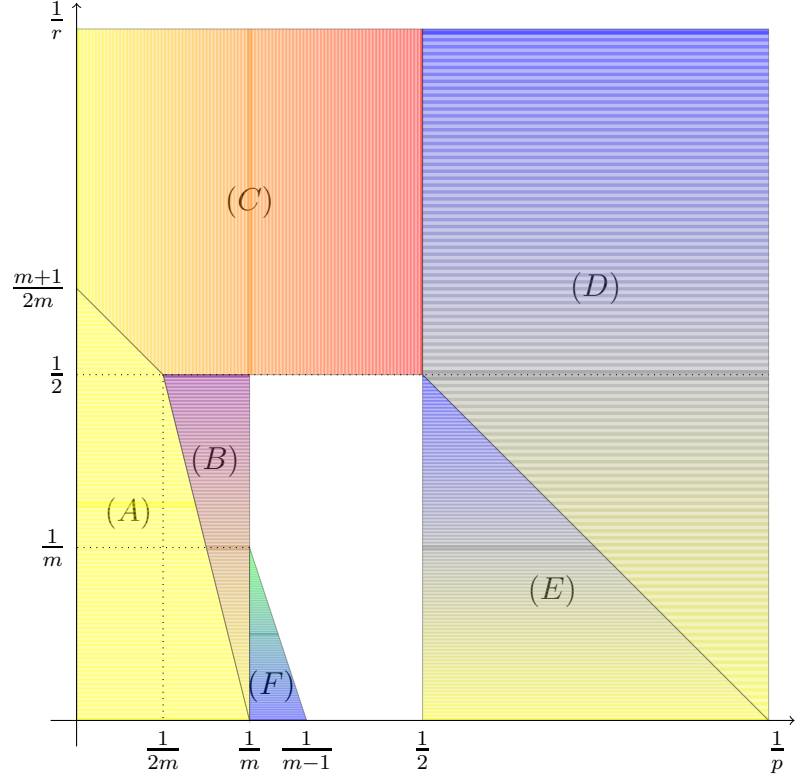

Also note that for $m=2$ we have a complete description of the asymptotics of $A_{p,r}^2 (n)$. For $r\ge 2$ this can also be deduced as a consequence of \cite[Theorems 1 and 2]{feng2007equivalence}. For $r<2$ the results are, up to our knowledge, new.

\begin{proof}
Let $P$ be an $m$-homogeneous polynomial in $n$ complex variables and $T$ its associated symmetric $m$-linear form. We will use several times the following inequalities, which are a simple consequence of H\"older's inequality,
\begin{align}
\vert P \vert_q  \leq \binom {n + m -1} {m}^{\frac{1}{q} - \frac{1}{r}} \vert P \vert_r & \ll n^{m(\frac{1}{q} - \frac{1}{r})} \vert P \vert_r &\text{ if }& 1\leq q \leq r \leq \infty. \label{comparacion coef}\\
\Vert P \Vert_{\mathcal{P}(^m\ell_p^n)}   & \leq n^{m(\frac{1}{q} - \frac{1}{p})} \Vert P \Vert_{\mathcal{P}(^m\ell_q^n)} &\text{ if }& 1\leq q \leq p \leq \infty. \label{comparacion unif}
\end{align}

Case $(A)$ is immediate from the Hardy-Littlewood inequalities and cases $(B)$ and $(C)$ also appear in \cite{araujo2015optimal}.

$\bullet (A):$  Suppose first that $\frac{1}{2}\le\frac1{r}\le \frac{m+1}{2m}-\frac1{p}$. If $q:=\frac{2mr}{(m+1)r-2m}$ then $2m  \leq q \leq p$ and by the Hardy-Littlewood inequality, Theorem~\ref{HLineq} $(ii)$,
we have $$\vert P \vert_r \ll \Vert P \Vert_{\mathcal{P}(^m\ell_q^n)} \ll \Vert P \Vert_{\mathcal{P}(^m\ell_p^n)}.$$
Now suppose $\frac{1}{r} \leq \frac{1}{2}$ and $\frac{m}{p} \leq 1 - \frac{1}{r}$. If we set $q:=\frac{mr}{r-1}$ then $m \leq q \leq \min\{p,2m\}$; then reasoning as before (but  using part $(i)$ of Theorem~\ref{HLineq}) we can easily reach the same conclusion.

$\bullet (B):$ Taking $ p \leq q = \frac{mr}{r-1} $, by the Hardy-Littlewood inequality, Theorem~\ref{HLineq} $(i)$, and \eqref{comparacion unif} it follows $$ | P |_r \ll \| P \|_{\mathcal{P}(^m \ell_q^n)} \ll \| P \|_{\mathcal{P}(^m \ell_p^n)} n^{m(\frac{1}{p} - \frac{1}{q})} = \| P \|_{\mathcal{P}(^m \ell_p^n)} n^{\frac{m}{p} + \frac{1}{r} - 1 } .$$

For the optimality we can take the polynomial $$ P = \displaystyle\sum_{j=0}^{k-1} z_{m j + 1} \cdots z_{mj + m}, \; \; \mbox{ with } \; \; k = \Big[\frac{n}{m} \Big],$$ it can be seen using Lagrange multipliers and the fact that $ p \geq m $, that $$ \| P \|_{\mathcal{P}(^m \ell_p^n)} = k \Big(\frac{1}{mk} \Big)^\frac{m}{p} \sim n^{1-\frac{m}{p}}.$$
Then,
$$
n^\frac{1}{r} \sim   k^\frac{1}{r}=| P |_r \leq  A_{p,r}^m (n) \| P \|_{\mathcal{P}(^m \ell_p^n)}
 \sim A_{p,r}^m(n) n^{1-\frac{m}{p}},
 $$
 and therefore $n^{\frac{m}{p} + \frac{1}{r} - 1 }\ll A_{p,r}^m.$

$\bullet  (C):$ Suppose $\frac{m+1}{2m} \leq \frac{1}{r}$ and $\frac{1}{p} \leq \frac{1}{2}$. Using the Bohnenblust-Hille  inequality Theorem~\ref{BHineq}, inequalities \eqref{comparacion coef} and \eqref{comparacion unif} we have
\begin{align*}
\vert P \vert_r & \ll n^{m(\frac{1}{r} - \frac{m+1}{2m})} \vert P \vert_{\frac{2m}{m+1}} \ll n^{m(\frac{1}{r} - \frac{m+1}{2m})} \Vert P \Vert_{\mathcal{P}(^m\ell_{\infty}^n)} \\
&\ll n^{m(\frac{1}{r} - \frac{m+1}{2m})} n^{\frac{m}{p}} \Vert P \Vert_{\mathcal{P}(^m\ell_p^n)} = n^{m (\frac{1}{p}+\frac{1}{r}-\frac{1}{2}) - \frac{1}{2}} \Vert P \Vert_{\mathcal{P}(^m\ell_p^n)}.
\end{align*}

Suppose  $\frac{1}{2} \leq \frac{1}{r} \leq \frac{m+1}{2m} \leq \frac{1}{p} + \frac{1}{r}$ and let $q:= \frac{2mr}{(m+1)r-2m}$.
Note that $\max\{2m,p\}\leq q$. By the Hardy-Littlewood inequality, Theorem~\ref{HLineq} $(ii)$ and \eqref{comparacion unif} we get

$$ \vert P \vert_r \ll \Vert P \Vert_{\mathcal{P}(^m\ell_q^n)} \ll  n^{m(\frac{1}{p}-\frac{1}{q})}\Vert P \Vert_{\mathcal{P}(^m\ell_p^n)} = n^{m (\frac{1}{p}+\frac{1}{r}-\frac{1}{2}) - \frac{1}{2}} \Vert P \Vert_{\mathcal{P}(^m\ell_p^n)}.$$

To show that this asymptotic growth is optimal, we consider $P$ an $m$-homogeneous unimodular polynomial as in \eqref{polinomios de Bayart}. Then, as $ \frac{1}{p} \leq \frac{1}{2}$

$$n^{\frac{m}{r}} \ll \vert P \vert_r \leq  A_{p,r}^m(n)  \Vert P \Vert_{\mathcal{P}(^m\ell_p^n)} \ll A_{p,r}^m(n) n^{m(\frac{1}{2} - \frac{1}{p}) + \frac{1}{2}}.$$
Therefore, $$n^{m (\frac{1}{p}+\frac{1}{r}-\frac{1}{2}) - \frac{1}{2}} = n^{\frac{m}{r}-  [m(\frac{1}{2} - \frac{1}{p}) + \frac{1}{2}]} \ll A_{p,r}^m(n).$$

$\bullet (D):$ We adapt an argument based on \cite{cobos1999gp} used by the authors in the case $p=r=2$ to show the relationship between the Hilbert-Schmidt and supremum norm of a multilinear form on $\ell_2^n$. If $T$ is the symmetric $m$-linear form associated to $P$, it induces a $(m-1)$-linear mapping $ \tilde{T} \in \mathcal{L}(^{m-1} l_p^n; (l_p^n)^*)$, defined by
$$
\tilde{T}(x_1, \ldots,x_{m-1})(\cdot) = T(x_1, \ldots, x_{m-1}, \cdot).
$$
Then
\begin{eqnarray*}
| T |_r^r & = & \displaystyle\sum_{ i \in \mathcal{M}(m,n)} \vert T(e_{i_1}, \ldots, e_{i_m}) \vert^r \\
& = & \displaystyle\sum_{ i \in \mathcal{M}(m-1,n)} \displaystyle\sum_{l=1}^n \vert T(e_{i_1}, \ldots, e_{i_{m-1}}, e_l) \vert^r \\
& \overset{\mbox{H\"older}}{\leq} & \displaystyle\sum_{i \in \mathcal{M}(m-1,n)} (\displaystyle\sum_{l=1}^n \vert T(e_{i_1}, \ldots, e_{i_{m-1}}, e_l) \vert^{p'} )^\frac{r}{p'} n^{\frac{r}{p} +1-r}\\
& \leq & n^{m-1 + \frac{r}{p} +1-r} \displaystyle\sup_{ \Vert x_i \Vert_{p} \leq 1} \Vert \tilde{T}(x_1, \ldots, x_{m-1}) \Vert_{p'}^r \\
& = &  n^{m + \frac{r}{p} - r} \Vert T \Vert_{\mathcal{L}(^m \ell_p^n)}^r, \\
\end{eqnarray*}
where in the first inequality we used H\"older inequality in the case $ \frac{p'}{r} \geq 1$.
Then by equations \eqref{P vs T coef} and \eqref{P vs T unif} we have $$ |P|_r \ll n^{\frac{m}{r} + \frac{1}{p} - 1} \| P \|_{\ell_p^n}.$$

For the optimality, we use \eqref{polinomios de Bayart} so, since $ 1 \leq p \leq 2 $ there exists a unimodular polynomial $P$ such that
$$ n^{\frac{m}{r}} \ll | P |_r \leq A_{p,r}^m(n) \| P \|_{\mathcal{P}(\ell_p^n)} \ll A_{p,r}^m (n) n^{1 - \frac{1}{p}}.$$

$\bullet (E):$ Observe that $$ A_{p,r}^m(n) = \| id : \mathcal{P}(^m \ell_p^n) \rightarrow (\mathcal{P}(^m \mathbb{C}^n), | \cdot |_r) \|.$$
Thus, if $ \frac{1}{r} = \frac{\theta}{p'} $, for $0<\theta<1$, we can apply complex interpolation to conclude that
$$ A_{p,r}^m(n) \leq (A_{p,p'}^m(n))^\theta (A_{p,\infty}^m(n))^{1 - \theta}.$$
Since $1\le p\le 2$, we have by part $(D)$ that $A_{p,p'}^m(n) \sim n^{\frac{m-1}{p'}}$ and also, applying the Cauchy integral formula we deduce that $A_{p,\infty}^m(n) \sim 1$. Therefore, we obtain $$A_{p,r}^m(n) \ll n^{ \frac{m-1}{r}}.$$

For the lower bound, taking a Steiner polynomial $P \in \mathcal{P}(^m \mathbb{C}^n)$ as in Corollary~\ref{corosteiner} whose associated partial Steiner system has cardinality $\gg n^{m-1}$ and $1\le p\le 2$, then
$$ n^{\frac{m-1}{r}} \ll |P|_r \leq A_{p,r}^m (n) \| P \|_{\mathcal{P}(^m \ell_p^n)}
 \ll A_{p,r}^m (n) \log^{\frac{3p-3}{p}}(n).$$

 Hence, we have that for every $\varepsilon > 0,$ $$ n^{ \frac{m-1}{r} - \varepsilon} \ll A_{p,r}^m (n).$$

$\bullet (F):$ Let $T$ be the symmetric $m$-linear form associated to $P$ and, given $ 1 \leq i \leq n$, let us define $ T_i \in \mathcal{L}(^{m-1} \mathbb{C}^n)$ as
$$
T_i(x_2, \ldots, x_m) = T(e_i, x_2, \ldots, x_m) .
$$
Then
\begin{eqnarray}
|P|_r^r\sim |T|_r^r & = & \displaystyle\sum_{ i \in \mathcal{M}(m,n)} \vert T(e_{i_1}, \ldots, e_{i_m}) \vert^r \nonumber \\
& = & \displaystyle\sum_{i=1}^n | T_i |_r^r  \nonumber \\
& \ll & \displaystyle\sum_{i=1}^n \| T_i \|_{\mathcal{L}(^{m-1} \ell_p^n)}^r \label{recurr m-1} \\
& \leq & n \| T \|_{\mathcal{L}(^m \ell_p^n )}^r \sim n \|P\|_{\mathcal{P}(^m \ell_p^n )}^r \nonumber ,
\end{eqnarray}
where we have used in \eqref{recurr m-1} the fact that $A_{p,r}^{m-1}(n)\sim 1$ for this range of $p$ and $r$.
 Therefore $$ | P |_r \ll n^{\frac{1}{r}} \| P \|_{\mathcal{P}(^m \ell_p^n)}.$$

For the lower bound, let $P= \displaystyle\sum_{j=1}^k z_{m j + 1} \cdots z_{mj + m}$ as in part $(B)$, then since $p\ge m$ (in region $(F)$), we have that $\| P \|_{\mathcal{P}(^m \ell_p^n)}\sim 1$ and thus
$$
n^{\frac{1}{r}} \sim | P |_r \ll   A_{p,r}^m (n) \| P \|_{\mathcal{P}(^m \ell_p^n)} \sim  A_{p,r}^m (n).
$$

\end{proof}

For $2\le p \le m$, $2\le r<\infty$ and $(\frac1{p},\frac1{r})\notin (F)$ we could have used interpolation (in vertical direction, as we did in the proof of part $(E)$ of Theorem~\ref{teorema con A}) to obtain effective upper bounds for $A_{p,r}^m $. We choose not to state them explicitly since  we  believe  these  estimates  are  suboptimal.

\subsection{An application to interpolation of tensor products of Banach spaces}

Kouba in \cite{kouba1991interpolation} (see also \cite{defant2003interpolation,defant2000complex}) proved a remarkable result on complex interpolation of injective tensor products of Banach spaces, which implies that for $L_p$-spaces,
\begin{equation*}
[L_{p_0}\otimes_\varepsilon L_{q_0},L_{p_1}\otimes_\varepsilon L_{q_1}]_\theta = L_{p}\otimes_\varepsilon L_{q},
\end{equation*}
for $0<\theta<1$, $1\le p_0,q_0,p_1,q_1\le 2$, $\frac1{p}=\frac{1-\theta}{p_0}+\frac{\theta}{p_1}$ and $\frac1{q}=\frac{1-\theta}{q_0}+\frac{\theta}{q_1}$.

We will show that  Theorem \ref{teorema con A} implies that a similar statement does not hold for the $m$-fold injective tensor, $m>2$.
Indeed, we will show that the following problem has a negative answer.
\begin{problem}[]\label{interp compleja}
Let $1 \leq p_0,p_1 \leq 2 $, $ 0 < \theta < 1$ and $n,m \in \mathbb{N}$ and consider
$$\iota_{\theta} : \otimes_{i=1, \varepsilon}^m [\ell_{p_0}^n, \ell_{p_1}^n]_{\theta} \longrightarrow [\otimes_{i=1, \varepsilon}^m \ell_{p_0}^n, \otimes_{i=1, \varepsilon}^m \ell_{p_1}^n ]_{\theta},$$ the natural inclusion.
Is there any constant $ C> 0 $ independent of $n$ such that $$ \| \iota_\theta \|\leq C ?$$
\end{problem}

We are sincerely grateful to Jorge Tom\'as Rodriguez, who gently gave us the idea of the following family of counterexamples.

\begin{remark}\label{resp interp compleja}
The answer to the question in Problem \ref{interp compleja} is negative in general. In particular, for $m \ge 3$, $r > m$, $ (m-1) r' \le p_1' < m$, $ m r' \le p_0' $ and $\theta$ such that $\frac{1}{p'} = \frac{1-\theta}{p_0'} + \frac{\theta}{p_1'} \le m $ there is no $C>0$ independent of $n$ such that $\| \iota_\theta \| \le C$.
\end{remark}

\begin{proof}
Observe that $(\frac1{p_0'},\frac1{r})\in (A)$ and  $(\frac1{p_1'},\frac1{r})\in (F)$ with $p_1' > m$. Assuming a positive answer to Problem~\ref{interp compleja}, we would have that for, $\frac1{p'}=\frac{1-\theta}{p_0'}+\frac{\theta}{p_1'}$, $\theta\in (0,1)$, 
 $$
 A_{p,r}^m(n)\le n^{\frac{1-\theta}{r}}.
 $$
If $\theta$ fulfills the condition $\frac{1}{p'} = \frac{1-\theta}{p_0} + \frac{\theta}{p_1} \le m $ we have $(p',r) \in (F)$, this contradicts the lower bound from region $(F)$ in Theorem \ref{teorema con A} .
\end{proof}

In \cite{bayart2018coincidence} the authors gave a similar result independently and with a different approach. Observe that the question in Problem \ref{interp compleja} still unanswered when both $ 2 \le p_0',p_1' \le m  $ and $p_0' \neq p_1'$. In the case the answer is affirmative for these parameters, this would allow us to prove the remaining cases in Theorem \ref{teorema con A}.

\subsection{Asymptotic estimates for $B_{r,p}^m (n)$.} We now present the correct asymptotic behavior for the constants $B_{r,p}^m (n)$ defined in Problem~\ref{problema}. These estimates will be useful in the next section for the applications.

\begin{proposition}\label{asint B}
Let $B_{r,p}^m(n)$ be the smallest constant such that for every $m$-homogeneous polynomial $P$ in $n$ complex variables, $
\Vert P \Vert_{\mathcal{P}(^m\ell_p^n)} \leq B_{r,p}^m(n) \;\vert P \vert_{r}.$
We have
$$ B_{r,p}^m(n) \sim
\begin{cases}
1 & \text{ for } r \leq p', \\
n^{m (1-\frac{1}{p}-\frac{1}{r})} & \text{ for } r \geq p'.
\end{cases}
$$

\end{proposition}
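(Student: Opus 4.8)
The plan is to split into the two regimes $r \le p'$ and $r \ge p'$, and in each case prove the upper bound (an inequality valid for all $P$) and then exhibit an extremal polynomial witnessing its sharpness.

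For the upper bound I would proceed as follows. For each fixed $z$ with $\|z\|_p \le 1$, the value $P(z) = \sum_{\mathbf j} c_{\mathbf j} z_{\mathbf j}$ is an inner product of the coefficient vector $(c_{\mathbf j})$ with the vector of monomials $(z_{\mathbf j})_{\mathbf j \in \mathcal J(m,n)}$, so H\"older gives $|P(z)| \le |P|_r \,\big(\sum_{\mathbf j}|z_{\mathbf j}|^{r'}\big)^{1/r'}$. Thus it suffices to estimate $\sup_{\|z\|_p\le 1}\big(\sum_{\mathbf j \in \mathcal J(m,n)}|z_{\mathbf j}|^{r'}\big)^{1/r'}$, which is the norm of the monomial "extraction" map. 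When $r \le p'$ we have $r' \ge p$, and since $\sum_{\mathbf j}|z_{\mathbf j}|^{r'} \le \big(\sum_{i}|z_i|^{r'}\big)^m \le \big(\sum_i |z_i|^p\big)^{m r'/p} \le 1$ (using $r'\ge p$ together with $\|z\|_p\le 1$, via the embedding $\ell_p^n \hookrightarrow \ell_{r'}^n$ with norm $1$), we get $\|P\|_{\mathcal P(^m\ell_p^n)}\le |P|_r$, i.e.\ $B_{r,p}^m(n)\ll 1$. When $r \ge p'$, i.e.\ $r' \le p$, one instead uses H\"older in the other direction: $\sum_{\mathbf j\in\mathcal J(m,n)}|z_{\mathbf j}|^{r'} \le |\mathcal J(m,n)|^{1-r'/p}\big(\sum_{\mathbf j}|z_{\mathbf j}|^{p}\big)^{r'/p}$ followed by $\sum_{\mathbf j}|z_{\mathbf j}|^p \le \|z\|_p^{mp}\le 1$; since $|\mathcal J(m,n)|\sim n^m$ this yields $\sup_{\|z\|_p\le 1}\big(\sum_{\mathbf j}|z_{\mathbf j}|^{r'}\big)^{1/r'}\ll n^{m(1/r'-1/p)}=n^{m(1-1/p-1/r)}$, hence $B_{r,p}^m(n)\ll n^{m(1-1/p-1/r)}$. (One must check the edge cases $r'=p$, $r=\infty$, $p=\infty$ separately, but these are routine limiting arguments.)

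For optimality I would use the single "disjoint monomials" polynomial that already appears repeatedly in the proof of Theorem~\ref{teorema con A}: $P=\sum_{j=0}^{k-1} z_{mj+1}\cdots z_{mj+m}$ with $k=[n/m]$. Its coefficient vector is a $\pm1$ vector of length $k\sim n$, so $|P|_r = k^{1/r}\sim n^{1/r}$ for every $r$ (including $r=\infty$, where $|P|_\infty=1$ — so for $r=\infty$ one should instead rescale or note directly that $\|P\|/|P|_\infty \sim \|P\|$). Its sup-norm on $B_{\ell_p^n}$ is computed by Lagrange multipliers exactly as in part $(B)$: the optimal $z$ puts equal mass $1/(mk)$ on each of the $mk$ relevant coordinates when $p\le m$, giving $\|P\|_{\mathcal P(^m\ell_p^n)} = k(1/(mk))^{m/p}\sim n^{1-m/p}$; when $p\ge m$ one gets $\|P\|_{\mathcal P(^m\ell_p^n)}\sim 1$. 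In either case $\|P\|_{\mathcal P(^m\ell_p^n)}\sim n^{\max\{1-m/p,\,0\}}$, so $B_{r,p}^m(n)\gg \|P\|/|P|_r \sim n^{\max\{1-m/p,0\}-1/r}$. A short arithmetic check shows this matches $n^{m(1-1/p-1/r)}$ precisely when $r\ge p'$ (there $1-m/p\ge 0$ fails in general, so the relevant comparison is with $m(1-1/p-1/r)$, and the two exponents agree on the nose in the region $r\ge p'$, while for $r\le p'$ the lower bound degenerates to a constant, matching $B\sim1$). If for some sub-range of $r\ge p'$ the disjoint-monomial polynomial turns out to be suboptimal, the fallback is a single monomial $P=z_1\cdots z_m$ (which gives $\|P\|_{\mathcal P(^m\ell_p^n)} = m^{-m/p}$ and $|P|_r=1$, hence only a constant lower bound) or, better, the full-support polynomial $P=\sum_{\mathbf j}z_{\mathbf j}$ evaluated at $z=n^{-1/p}(1,\dots,1)$, which gives $|P|_r = |\mathcal J(m,n)|^{1/r}\sim n^{m/r}$ and $\|P\|_{\mathcal P(^m\ell_p^n)}\ge |P(n^{-1/p}\mathbf 1)| = n^{-m/p}|\mathcal J(m,n)|\sim n^{m(1-1/p)}$, yielding $B_{r,p}^m(n)\gg n^{m(1-1/p)-m/r}=n^{m(1-1/p-1/r)}$, exactly the claimed rate.

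The main obstacle I anticipate is not any single estimate — all of them are one-line H\"older or Lagrange-multiplier computations — but rather bookkeeping the boundary and degenerate cases: $p=\infty$ (where $\ell_p^n\hookrightarrow\ell_{r'}^n$ still has norm $1$ and $n^{m(1-1/p-1/r)}=n^{m(1-1/r)}$), $r=\infty$ (where $|P|_\infty$ is a max, so "$n^{1/r}$" must be read as $1$ and the lower-bound polynomial must be chosen accordingly), $r=1$, and the exact transition $r=p'$ where the two formulas must agree ($n^{m(1-1/p-1/r)}=n^0=1$ when $1/r=1/p'=1-1/p$, which they do). I would organize the write-up as two displayed chains of inequalities for the upper bounds and two explicit polynomials for the lower bounds, and handle the extreme exponents with a sentence each rather than separate lemmas.
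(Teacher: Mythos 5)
Your overall approach matches the paper's: upper bound by H\"older, lower bound by exhibiting a full-support polynomial evaluated along the diagonal. The cosmetic difference in the upper bound is that you pair the coefficient vector against the monomial vector with exponents $(r,r')$ and then estimate $\sup_{\|z\|_p\le1}\big(\sum_{\mathbf j}|z_{\mathbf j}|^{r'}\big)^{1/r'}$, whereas the paper first proves $\|P\|_{\mathcal P(^m\ell_p^n)}\le |P|_{p'}$ via H\"older with exponents $(p',p)$ and then compares $|P|_{p'}$ with $|P|_r$ by the elementary $\ell_{p'}\hookrightarrow\ell_r$ (resp.\ the reverse) inequality; both routes reduce to the same estimate $\sum_{\mathbf j\in\mathcal J(m,n)}|z_{\mathbf j}|^p\le\|z\|_p^{mp}$ and are essentially equivalent. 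Your final lower-bound argument (the polynomial $P=\sum_{\mathbf j\in\mathcal J(m,n)}z_{\mathbf j}$ evaluated at $n^{-1/p}(1,\dots,1)$, giving $\|P\|_{\mathcal P(^m\ell_p^n)}\gg n^{m(1-1/p)}$ against $|P|_r\sim n^{m/r}$) is exactly the paper's.

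However, the intermediate claim about the disjoint-monomial polynomial is wrong, and you should delete that detour rather than present it as a near-miss. For $P=\sum_{j=0}^{k-1}z_{mj+1}\cdots z_{mj+m}$ you correctly compute $|P|_r\sim n^{1/r}$ and $\|P\|_{\mathcal P(^m\ell_p^n)}\sim n^{\max\{1-m/p,\,0\}}$, so the lower bound it yields is $n^{\max\{1-m/p,0\}-1/r}$. This does \emph{not} ``agree on the nose'' with $n^{m(1-1/p-1/r)}$: the gap in the exponents is $(m-1)(1-1/r)$ when $p\ge m$, and $m(1-1/p)-(m-1)/r\ge 1-1/p$ when $p\le m$, both strictly positive for $m\ge2$, $r>1$, $p>1$. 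So the disjoint-monomial polynomial gives a strictly weaker lower bound throughout the interesting range of the region $r\ge p'$; the full-support polynomial is not an optional ``fallback'' but is the required witness, as in the paper. With that correction, and with the boundary cases $r=\infty$, $p=\infty$, $r=p'$ handled as you sketch, the proposal is sound.
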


\begin{proof}
Let $n,m \in \mathbb{N}$ and $1 \leq p, r \leq \infty$.
Let $ P = \displaystyle\sum_{\alpha \in \Lambda(m,n) } a_{\alpha} z^{\alpha} $ be an $m$-homogeneous polynomial in $n$ variables.
Suppose first that $r \leq p'$. Then
\begin{eqnarray*}
\Vert P \Vert_{\mathcal{P}(^m \ell_p^n)} & = & \displaystyle\sup_{z \in B_{\ell_p^n}} \vert \displaystyle\sum_{\alpha \in \Lambda(m,n)} a_{\alpha} z^{\alpha} \vert \\
& \leq & \displaystyle\sup_{z \in B_{\ell_p^n}} (\displaystyle\sum_{\alpha \in \Lambda(m,n)} \vert a_{\alpha} \vert^{p'} )^{\frac{1}{p'}} \displaystyle(\sum_{\alpha \in \Lambda(m,n)} \vert z^{\alpha} \vert^{p} )^{\frac{1}{p}} \\
& \leq & | P |_{p'} \displaystyle\sup_{z \in B_{\ell_p^n}}(\displaystyle\sum_{\mathbf{i} \in \mathcal M(m,n)} \vert z_\mathbf{i} \vert^{p} )^{\frac{1}{p}} \\
& = &  | P |_{p'} \displaystyle\sup_{z \in B_{\ell_p^n}}(\displaystyle\sum_{k=1}^n \vert z_k \vert^{p})^{\frac{m}{p}} \\
& = &  | P |_{p'} \\
&\le& | P |_{r}.
\end{eqnarray*}

On the other hand, if $ r \geq p'$,
\begin{eqnarray*}
\Vert P \Vert_{\mathcal{P}(^m \ell_p^n)} & \leq & | P |_{p'} \\
& \leq & | P |_r n^{m(\frac{1}{p'} - \frac{1}{r})} \\
& = & | P |_r n^{m(1  - \frac{1}{p}- \frac{1}{r})}.
\end{eqnarray*}

To study lower bounds, let us take the polynomial $P(z) = \sum_{\mathbf{j} \in \mathcal J(m,n)} z_{\mathbf{j}}$. Note that $ | P  |_r \sim n^{\frac{m}{r}} $ and
\begin{eqnarray*}
\Vert P \Vert_{\mathcal{P}(^m \ell_p^n)} & = & \displaystyle\sup_{z \in B_{\ell_p^n}} \vert \displaystyle\sum_{\mathbf{j} \in \mathcal J(m,n)} z_{\mathbf{j}} \vert \\
& \geq & \vert \displaystyle\sum_{\mathbf{j} \in \mathcal J(m,n)} n^{-\frac{m}{p}} \vert \; \; \; \; \;\mbox{  taking  } z=(\overbrace{ \frac{1}{n^{\frac{1}{p}}}, \ldots, \frac{1}{n^{\frac{1}{p}}}}^m) \\
& \sim & n^{m(1-\frac{1}{p})}.\\
\end{eqnarray*}

Therefore $B_{r,p}^m(n) \gg n^{m(1 - \frac{1}{r} - \frac{1}{p})}.$
\end{proof}

\section{Mixed unconditional basis constant for homogeneous polynomials on $\ell_p$ spaces} \label{secuncond}
 Here we will study the asymptotic growth of $\chi_{p,q}(\mathcal{P}(^m \mathbb{C}^n))$ for fixed $1 \leq q,p \leq \infty$ and $m \in \mathbb{N}$ as $n$ tends to infinity (see Definition~\ref{defmix}).

The following result shows that, in order to study the asymptotic behavior of the mixed unconditional constants of $\mathcal{P}(^m \mathbb{C}^n)$, it is enough to understand what happens with the monomial basis $(z_{\mathbf j})_{\mathbf j \in \mathcal{J}(m,n)}$. These can be seen as a sort extension of a result of Pisier and Sch\"utt \cite{pisier1978some,schutt1978unconditionality} (see also \cite{defant2001unconditional, defant2011bohr,carando2011unconditionality}).

\begin{theorem} \label{incondicionalidadbasemonomial}
We have the following relation:
$$  \chi_{p,q}(\mathcal{P}(^m \mathbb{C}^n)) \leq \chi_{p,q}\big((z_{\mathbf j})_{\mathbf j \in \mathcal{J}(m,n)}\big) \leq 2^m \chi_{p,q}(\mathcal{P}(^m \mathbb{C}^n)).$$
\end{theorem}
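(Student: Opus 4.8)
The plan is to establish the two inequalities separately. The left-hand inequality $\chi_{p,q}(\mathcal{P}(^m\mathbb{C}^n)) \leq \chi_{p,q}\big((z_{\mathbf j})_{\mathbf j\in\mathcal J(m,n)}\big)$ is immediate from the definition: the monomial family $(z_{\mathbf j})_{\mathbf j\in\mathcal J(m,n)}$ is one particular basis of $\mathcal{P}(^m\mathbb{C}^n)$, so its mixed unconditional constant is one of the quantities over which we take the infimum defining $\chi_{p,q}(\mathcal{P}(^m\mathbb{C}^n))$. Hence the interesting content is the right-hand inequality.

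For the right-hand inequality, the approach I would take mirrors the Pisier--Sch\"utt philosophy cited just before the statement: given an \emph{arbitrary} basis $(P_i)_{i\in\Lambda}$ of $\mathcal{P}(^m\mathbb{C}^n)$ with small mixed unconditional constant $\chi_{p,q}((P_i)_{i\in\Lambda})=:C$, I want to produce a bound of the form $\chi_{p,q}\big((z_{\mathbf j})_{\mathbf j}\big)\leq 2^m C$, and then take the infimum over all bases $(P_i)$. The key step is to extract, from control of sign-flips in the basis $(P_i)$, control of sign-flips in the monomial basis. The natural tool is an averaging/randomization argument: flipping signs of the coefficients $c_{\mathbf j}$ of a polynomial written in the monomial basis can be realized by averaging the sign-flip operators in the basis $(P_i)$ against the change-of-basis matrix, and the factor $2^m$ should come from the combinatorial structure of $\mathcal J(m,n)$ — specifically, that a monomial $z_{\mathbf j}=z_{j_1}\cdots z_{j_m}$ of degree $m$ can be recovered by a polarization-type sum of $2^m$ (or fewer) evaluations, or equivalently that multiplying each variable $z_k$ by a sign $\theta_k\in\{\pm1\}$ multiplies the monomial $z_{\mathbf j}$ by $\prod_{k}\theta_k^{\alpha_k}$, and such products range over at most $2^m$ distinct sign patterns as $(\theta_k)$ varies. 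Concretely, I would consider the substitution operators $R_\theta\colon P(z)\mapsto P(\theta_1 z_1,\dots,\theta_n z_n)$ for $\theta\in\{\pm1\}^n$: these are isometries of $\mathcal{P}(^m\ell_p^n)$ and of $\mathcal{P}(^m\ell_q^n)$, they act diagonally on the monomial basis (so they ARE monomial sign-flips, but only those arising from variable sign-flips), and they also act on the basis $(P_i)$. Averaging $R_\theta$ against Rademacher-type weights lets one isolate individual monomial coefficients; the loss incurred in passing from the "full" monomial sign-flip group to the subgroup generated by variable sign-flips is exactly accounted for by $2^m$, since the characters $\theta\mapsto\prod\theta_k^{\alpha_k}$ on $\{\pm1\}^n$ take at most $2^m$ values for $|\alpha|=m$ and separate the monomials up to this multiplicity.

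The main obstacle I anticipate is the bookkeeping in the averaging step: one must show that an arbitrary choice of signs $(\eta_{\mathbf j})_{\mathbf j\in\mathcal J(m,n)}$ on the monomials can be approximated, in the relevant operator-norm sense, by a convex combination (with total weight controlled by $2^m$) of operators of the form "pick a basis $(P_i)$, flip its signs by some $(\theta_i)$, express back in monomials" composed with variable-substitution isometries $R_\theta$ — or, more likely, that one does not need arbitrary monomial sign patterns at all, only that $\chi_{p,q}\big((z_{\mathbf j})_{\mathbf j}\big)$ can be computed testing against sign patterns of the special form $\eta_{\mathbf j}=\prod_k\theta_k^{\alpha_k}$, up to the factor $2^m$. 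I would therefore first reduce $\chi_{p,q}\big((z_{\mathbf j})_{\mathbf j}\big)$ to a supremum over the structured sign patterns coming from $\{\pm1\}^n$ acting on variables (this is where the $2^m$ enters, via a counting argument on the values of the characters), then observe that for such structured patterns the sign-flip operator on the monomial basis equals $R_\theta$, which is an isometry on both $\mathcal{P}(^m\ell_p^n)$ and $\mathcal{P}(^m\ell_q^n)$ and commutes with re-expansion in any basis — giving the bound $2^m\cdot 1$ a priori, but more carefully $2^m\,\chi_{p,q}((P_i)_{i\in\Lambda})$ once one inserts the basis $(P_i)$ into the comparison. Taking the infimum over bases $(P_i)$ then yields $\chi_{p,q}\big((z_{\mathbf j})_{\mathbf j}\big)\leq 2^m\,\chi_{p,q}(\mathcal{P}(^m\mathbb{C}^n))$, completing the proof.
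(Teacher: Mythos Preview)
Your plan for the right-hand inequality has a genuine gap. The reduction you propose---replacing arbitrary monomial sign-flips $(\eta_{\mathbf j})_{\mathbf j}$ by the structured ones $\eta_{\mathbf j}=\prod_k\theta_k^{\alpha_k}$ coming from variable substitutions $R_\theta$, at the cost of a factor $2^m$---cannot work. The operators $R_\theta$ are isometries of both $\mathcal P(^m\ell_p^n)$ and $\mathcal P(^m\ell_q^n)$, so testing only against those patterns yields constant $1$. If your reduction were valid it would give $\chi_{p,q}\big((z_{\mathbf j})_{\mathbf j}\big)\le 2^m$ for \emph{all} $p,q$, which is false: already for $p=q=\infty$ this constant grows like $n^{(m-1)/2}$. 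Your counting heuristic (``the characters take at most $2^m$ values'') does not deliver what you need: the characters $\theta\mapsto\theta_{j_1}\cdots\theta_{j_m}$ take values in $\{\pm1\}$, but the number of \emph{distinct} such characters grows with $n$, and in any case the gap between the full monomial sign group and the variable-sign subgroup is exactly the unconditional constant you are trying to bound. Your final clause (``more carefully $2^m\chi_{p,q}((P_i))$ once one inserts the basis'') does not repair this, because $R_\theta$ commutes with re-expansion in any basis and never sees $(P_i)$.

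The paper's argument is structurally different and supplies the missing link to the arbitrary basis $(P_i)$. It first reformulates $\chi_{p,q}((P_i))$ dually, as the best constant in $\sum_i|\langle P_i',Q\rangle\langle Q',P_i\rangle|\le C\|Q\|_{\mathcal P(^m\ell_p^n)}\|Q'\|_{\mathcal P(^m\ell_q^n)'}$. To bound the analogous sum for the monomial basis, one inserts $1=|\sum_i\langle z_{\mathbf j}',P_i\rangle\langle P_i',z_{\mathbf j}\rangle|$, swaps the order of summation, applies Cauchy--Schwarz in $\mathbf j$, and then uses Bayart's hypercontractive inequality $\big(\sum_{\mathbf j}|c_{\mathbf j}|^2\big)^{1/2}\le 2^{m/2}\int_{\mathbb T^n}|P(w)|\,dw$ on each factor. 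This converts the $\ell_2$-sums into averages over $w\in\mathbb T^n$ of expressions of the form $|\langle (T_w)^*Q',P_i\rangle|$ and $|\langle P_i',T_{\tilde w}Q\rangle|$, where $T_w$ multiplies monomial coefficients by $w_{\mathbf j}$; now the dual characterization of $\chi_{p,q}((P_i))$ applies directly, and since each $T_w$ has norm one the result follows. The factor $2^m$ arises as $2^{m/2}\cdot 2^{m/2}$ from the two applications of Bayart's inequality, not from any combinatorial counting of sign patterns.
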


Our proof relies on Szarek's approach \cite{szarek1981note} combined with the following inequality due to Bayart \cite{bayart2002hardy} (see also \cite{weissler1980logarithmic,defant2015p}).

\begin{lemma}[Bayart's inequality]
Let $P(z)=\sum_{\mathbf j\in\mathcal J(m,n)}c_{\mathbf j} z_{\mathbf j}$ be an $m$-homogeneous polynomial in $n$-variables. Then
\begin{equation}\label{bayart ineq}
\Big(\sum_{\mathbf j\in\mathcal J(m,n)}|c_{\mathbf j}|^2\Big)^{1/2}\le 2^{m/2}\int_{\mathbb T^n}| P(w)|dw,
\end{equation}
where $\mathbb T^n$ stands for the $n$-dimensional torus and $dw$ is the normalized Lebesgue measure on  $\mathbb T^n$.
\end{lemma}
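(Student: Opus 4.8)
The plan is to first observe that the left‑hand side of \eqref{bayart ineq} is simply an $L^2$‑norm. Viewed as characters of $\mathbb T^n$, the monomials $(z_{\mathbf j})_{\mathbf j\in\mathcal J(m,n)}$ form an orthonormal system in $L^2(\mathbb T^n)$, so by Parseval's identity $\big(\sum_{\mathbf j\in\mathcal J(m,n)}|c_{\mathbf j}|^2\big)^{1/2}=\|P\|_{L^2(\mathbb T^n)}$. Hence the lemma is equivalent to the Khinchin‑type estimate $\|P\|_{L^2(\mathbb T^n)}\le 2^{m/2}\,\|P\|_{L^1(\mathbb T^n)}$ for every $m$‑homogeneous polynomial $P$, and this is what I would prove.

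I would argue by induction on the number of variables $n$. Write $P(z)=\sum_{k=0}^m z_n^{\,k}\,S_k(z_1,\dots,z_{n-1})$, where $S_k$ is $(m-k)$‑homogeneous in $n-1$ variables; note that $S_k(\hat z)=\widehat{f_{\hat z}}(k)$ for the one‑variable polynomial $f_{\hat z}(w):=P(z_1,\dots,z_{n-1},w)$. Integrating in $w=z_n$ first and using the orthogonality of $1,w,\dots,w^m$ on $\mathbb T$ gives $\|P\|_{L^2(\mathbb T^n)}^2=\sum_{k=0}^m\|S_k\|_{L^2(\mathbb T^{n-1})}^2$. Applying the inductive hypothesis to each $S_k$ (degree $m-k\le m$, in $n-1$ variables) yields $\|S_k\|_{L^2(\mathbb T^{n-1})}^2\le 2^{\,m-k}\|S_k\|_{L^1(\mathbb T^{n-1})}^2$, so it only remains to check that $\sum_{k=0}^m 2^{-k}\|S_k\|_{L^1(\mathbb T^{n-1})}^2\le \|P\|_{L^1(\mathbb T^n)}^2$. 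Introducing on finite sequences the norm $\|(a_k)_k\|:=\big(\sum_k 2^{-k}|a_k|^2\big)^{1/2}$, this is obtained from Minkowski's integral inequality and the one‑variable estimate
\begin{equation*}
\sum_{k\ge 0}2^{-k}\,|\widehat f(k)|^2\ \le\ \Big(\int_{\mathbb T}|f(w)|\,dw\Big)^2\qquad\text{for every analytic polynomial }f,
\end{equation*}
applied slicewise to $f_{\hat z}$ and then integrated in $\hat z\in\mathbb T^{n-1}$. The base case $n=1$ is trivial, since there $P=c\,z_1^{\,m}$.

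The displayed one‑variable estimate is itself obtained by iterating (via the $L^1(\mathbb T)$‑isometry $F\mapsto (F-F(0))/w$, which lowers the degree by one) the single sharp inequality
\begin{equation*}
\|F\|_{L^1(\mathbb T)}^2\ \ge\ |F(0)|^2+\tfrac12\,\|F-F(0)\|_{L^1(\mathbb T)}^2,\qquad F\in H^1(\mathbb D).
\end{equation*}
Each iteration contributes one factor $\tfrac12$, producing exactly the weights $2^{-k}$.

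This last inequality is the heart of the matter and the step I expect to be hardest, since $H^1(\mathbb T)$ does not enjoy good quantitative behaviour and the triangle inequality alone is insufficient. The approach I would take is to normalize $F(0)=c>0$, set $G=F-F(0)\in H^1_0$, and study $\psi(\lambda):=\|c+\lambda G\|_{L^1(\mathbb T)}$, which is convex with $\psi(0)=c$; a direct computation exploiting that $\int_{\mathbb T}G\,dw=\int_{\mathbb T}G^2\,dw=0$ (both $G$ and $G^2$ lie in $H^1_0$) gives $\psi'(0)=0$ and $\psi''(0)=\|G\|_{L^2(\mathbb T)}^2/(2c)$, while $\psi(\lambda)/\lambda\to\|G\|_{L^1(\mathbb T)}$ as $\lambda\to\infty$ and $\|G\|_{L^1}\le\|G\|_{L^2}$; the point is then to combine this local/asymptotic information with convexity to get $\psi(\lambda)^2\ge c^2+\tfrac12\lambda^2\|G\|_{L^1}^2$ for \emph{all} $\lambda\ge 0$ (the constant $\tfrac12$ is optimal, as $F(w)=1+\varepsilon w$ shows). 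Establishing this for the full range of $\lambda$, rather than only near $0$ and $\infty$, is the main technical obstacle; alternatively one may deduce it from the known sharp Hardy‑space inequalities in the circle of ideas around Weissler's hypercontractivity, the degree‑one instance being exactly the sharp Steinhaus–Khinchin inequality. I would therefore isolate this one‑variable fact as a separate lemma and prove it carefully, the remainder of the argument being the bookkeeping of the induction above.
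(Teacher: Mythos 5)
The paper does not prove this lemma; it is invoked as a black box from Bayart \cite{bayart2002hardy}, with \cite{weissler1980logarithmic,defant2015p} offered as further references, so your proposal has to stand on its own. The reduction you set up is correct and clean: Parseval identifies the left-hand side with $\|P\|_{L^2(\mathbb T^n)}$; the slicing $P=\sum_{k} z_n^k S_k$ together with orthogonality gives $\|P\|_{L^2}^2=\sum_k\|S_k\|_{L^2}^2$; and the combination of the inductive hypothesis for the $S_k$ with Minkowski's integral inequality for the weighted norm $\|(a_k)_k\|=\bigl(\sum_k 2^{-k}|a_k|^2\bigr)^{1/2}$ does reduce everything to the one-variable estimate $\sum_{k\ge0}2^{-k}|\hat f(k)|^2\le\|f\|_{L^1(\mathbb T)}^2$ for analytic polynomials $f$. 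That estimate is precisely the statement that the dilation $T_{1/\sqrt2}f(w):=f(w/\sqrt2)$ is a contraction from $H^1(\mathbb T)$ to $H^2(\mathbb T)$, and Bayart's argument also comes down to this fact (by direct tensorization), so up to here your route is sound and essentially the standard one.

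The genuine gap is exactly where you flag it, and it is not small: the one-variable $H^1\to H^2$ hypercontractive estimate \emph{is} the analytic content of the lemma, and you do not prove it. Your plan to obtain it by iterating $\|F\|_1^2\ge|F(0)|^2+\tfrac12\|F-F(0)\|_1^2$ is structurally fine (the shift $F\mapsto(F-F(0))/w$ is an $L^1(\mathbb T)$-isometry, so iterating does produce the weights $2^{-k}$), but this one-step inequality is itself left unestablished. The convexity sketch yields $\psi'(0)=0$, $\psi''(0)=\|G\|_2^2/(2c)$, and the slope of $\psi$ at infinity --- this matches the target parabola to second order at the origin and bounds it asymptotically --- but convexity of $\psi$ (or of $\psi^2$) gives no lower bound on $\psi''$ away from $0$, so it cannot by itself exclude $\psi^2$ dipping below $c^2+\tfrac12\lambda^2\|G\|_1^2$ at intermediate $\lambda$: a convex function matching a parabola to second order at $0$ and dominating it at $\infty$ can still cross it in between. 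And falling back, as you suggest, on ``known sharp Hardy-space inequalities in the circle of ideas around Weissler's hypercontractivity'' is circular, since that circle of ideas is precisely the one-variable estimate being proved.

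To close the gap you would need an actual proof of the $H^1\to H^2$ contractivity of $T_{1/\sqrt2}$. The classical route is Riesz factorization: write $f=gh$ with $g,h\in H^2$ and $\|g\|_2\|h\|_2=\|f\|_1$, use that $T_r$ is multiplicative, and reduce by Cauchy--Schwarz to the $H^2\to H^4$ contractivity of $T_{1/\sqrt2}$, which after squaring becomes the concrete coefficient inequality $\sum_n 2^{-n}\bigl|\sum_k a_k a_{n-k}\bigr|^2\le\bigl(\sum_k|a_k|^2\bigr)^2$; that inequality must still be proved or explicitly cited. Your one-step inequality, if true, would imply all of this via the iteration, so it is at least as strong and cannot be an elementary convexity observation. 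As written, the proposal gives a correct and nicely organized reduction to one variable, but the core analytic fact is missing.
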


Before giving the proof we define the following operator.
For any $w=(w_1 \ldots, w_n) \in \mathbb{T}^n$ and any $ 1 \leq p \leq \infty $ we define the operator
\begin{eqnarray*}
T_w^p : \mathcal{P}(^m \ell_p^n ) & \longrightarrow & \mathcal{P}(^m \ell_p^n) \\
\displaystyle\sum_{ \mathbf j \in \mathcal{J}(m,n)} a_{\mathbf j } z_{\mathbf j}  & \longmapsto & \displaystyle\sum_{ \mathbf j \in \mathcal{J}(m,n)} a_{\mathbf j } z_{\mathbf j} w_{\mathbf j},
\end{eqnarray*}
which clearly has norm one.

We also need a remark whose proof is straightforward from Definition~\ref{defmix}.

\begin{remark} \label{equivincond}
 Let $(P_i)_{i \in \Lambda}$ be a basis for $\mathcal{P}(^m \mathbb{C}^n)$ and  $(P_i')_{i \in \Lambda}$  its dual basis (i.e., $\langle P_i', P_k\rangle = \delta_{i.k}$). For $ 1 \leq q, p \leq \infty$ and $ n,m \in \mathbb{N}$, $ \chi_{p,q}((P_i)_{i \in \Lambda})$ is exactly the best constant $C > 0$ such that
$$ \sum_{i \in \Lambda} \vert \langle P_i',Q\rangle \langle Q',P_i\rangle \vert \leq C \Vert Q  \Vert_{\mathcal{P}(^m \ell_p^n)} \Vert Q'  \Vert_{\mathcal{P}(^m \ell_q^n)'},$$
for every $Q \in \mathcal{P}(^m \mathbb{C}^n)$ and $Q' \in \mathcal{P}(^m \mathbb{C}^n)'$.
\end{remark}

\begin{proof}[Proof of Theorem~\ref{incondicionalidadbasemonomial}]
 Let $(P_i)_{i \in \Lambda}$ be a basis for $\mathcal{P}(^m \mathbb{C}^n)$ and  $(P_i')_{i \in \Lambda}$  its dual basis. Consider $Q \in \mathcal{P}(^m \mathbb{C}^n)$ and $Q' \in \mathcal{P}(^m \mathbb{C}^n)'$. Since $1=\vert \langle z_{\mathbf j}', z_{\mathbf j} \rangle \vert = \vert \sum_{i \in \Lambda} \langle z_{\mathbf j}', P_i \rangle  \langle P_i', z_{\mathbf j} \rangle \vert$, we have

 \begin{align*}
\sum_{\mathbf j \in \mathcal{J}(m,n)} \vert \langle Q', z_{\mathbf j}\rangle &\langle z_{\mathbf j}', Q \rangle \vert   = \sum_{\mathbf j\in \mathcal{J}(m,n)} \vert \langle Q', z_{\mathbf j}\rangle \langle z_{\mathbf j}', Q \rangle \vert \vert \sum_{i \in \Lambda} \langle z_{\mathbf j}', P_i \rangle  \langle P_i', z_{\mathbf j} \rangle \vert \\
& \leq \sum_{i \in \Lambda} \sum_{\mathbf j \in \mathcal{J}(m,n)} \vert \langle Q', z_{\mathbf j}\rangle \langle z_{\mathbf j}', Q \rangle  \langle z_{\mathbf j}', P_i \rangle  \langle P_i', z_{\mathbf j}  \rangle \vert\\
& { \leq} \sum_{i \in \Lambda} \big(\sum_{\mathbf j \in \mathcal{J}(m,n)} \vert \langle Q', z_{\mathbf j}\rangle \langle z_{\mathbf j}', P_i \rangle  \vert^2 \big)^{\frac{1}{2}}  \big(\sum_{\mathbf j \in \mathcal{J}(m,n)} \vert  \langle z_{\mathbf j}', Q \rangle   \langle P_i', z_{\mathbf j}  \rangle\vert^2 \big)^{\frac{1}{2}} \\
& {\leq} \sum_{i \in \Lambda} 2^{m/2} \int_{\mathbb{T}^n} \vert {\sum_{\mathbf j \in \mathcal{J}(m,n)}  \langle Q', z_{\mathbf j}\rangle \langle z_{\mathbf j}', P_i \rangle w_{\mathbf j} }\vert dw \; \cdot \; 2^{m/2} \int_{\mathbb{T}^n} \vert {\sum_{\mathbf j \in \mathcal{J}(m,n)}  \langle z_{\mathbf j}', Q\rangle \langle P_i', z_{\mathbf j} \rangle \tilde{w}_{\mathbf j} }\vert d\tilde{w} \\
& = \sum_{i \in \Lambda} 2^m \int_{\mathbb{T}^n} \vert {\langle (T_w^q)^*(Q'), P_i\rangle}\vert dw \; \cdot \; \int_{\mathbb{T}^n} \vert {\langle P_i', T_{\tilde{w}}^p(Q) \rangle}\vert d\tilde{w} \\
& = 2^m \int_{\mathbb{T}^n \times \mathbb{T}^n} \sum_{i \in \Lambda} \vert \langle (T_w^q)^*(Q'), P_i\rangle \langle P_i', T_{\tilde{w}}^p(Q) \rangle \vert dw d\tilde{w} \\
& \leq 2^m \int_{\mathbb{T}^n \times \mathbb{T}^n} \chi_{p,q}((P_i)_{i \in \Lambda}) \Vert  (T_w^q)^*(Q') \Vert_{\mathcal{P}(^m \ell_q^n)'} \Vert T_{\tilde{w}}^p(Q)  \Vert_{\mathcal{P}(^m \ell_p^n)} dw d\tilde{w} \\
& \leq 2^m \chi_{p,q}((P_i)_{i \in \Lambda}) \Vert  Q' \Vert_{\mathcal{P}(^m \ell_q^n)'} \Vert Q  \Vert_{\mathcal{P}(^m \ell_p^n)},
 \end{align*}
where we applied Cauchy-Schwarz for the second inequality, Bayart's inequality \eqref{bayart ineq} for the third one and Remark~\ref{equivincond} for the basis $(P_i)$ for the next to last inequality.
 Using Remark~\ref{equivincond} again but for the monomial basis $(z_{\mathbf j})_{ \mathbf j\in \mathcal{J}(m,n)}$ we have that
$$  \chi_{p,q}\big((z_{\mathbf j})_{\mathbf j \in \mathcal{J}(m,n)}\big) \leq 2^m \chi_{p,q}((P_i)_{i \in \Lambda}).$$
Since $(P_i)_{i \in \Lambda}$ is an arbitrary basis of $\mathcal{P}(^m \mathbb{C}^n)$ we have
$$  \chi_{p,q}(\mathcal{P}(^m \mathbb{C}^n)) \leq \chi_{p,q}\big((z_{\mathbf j})_{\mathbf j \in \mathcal{J}(m,n)}\big) \leq 2^m \chi_{p,q}(\mathcal{P}(^m \mathbb{C}^n)),$$
which concludes the proof.
\end{proof}

We now present some estimates for the asymptotic behavior of the mixed-$(p,q)$ unconditional constant of $\mathcal P(^m\mathbb C^n)$. Note that in the case $q=p$ we recover the results from \cite{defant2001unconditional}.
\begin{theorem}\label{cte incond mon}
\[
\begin{cases}  \; \chi_{p,q}(\mathcal{P}(^m \mathbb{C}^n)) \sim 1 & \text{ for } (I): \; [\frac{1}{p} + \frac{m-1}{2m} \leq \frac{1}{q} \wedge \frac{1}{p} \leq \frac{1}{2} ] \text{ or } [ \frac{m-1}{m} + \frac{1}{mp} < \frac{1}{q} \wedge \frac{1}{2} \leq \frac{1}{p} ], \\
\;\chi_{p,q}(\mathcal{P}(^m \mathbb{C}^n)) \sim n^{m (\frac{1}{p}-\frac{1}{q}+\frac{1}{2}) - \frac{1}{2}} & \text{ for } (II) \; \; [ \frac{1}{p} + \frac{m-1}{2m} \geq \frac{1}{q} \wedge \frac{1}{p} \leq \frac{1}{2}  ] ,\\
\; \chi_{p,q}(\mathcal{P}(^m \mathbb{C}^n)) \sim n^{(m-1)(1-\frac{1}{q}) + \frac{1}{p}  -\frac{1}{q}} & \text{ for } (III) \; : \; [ \frac{1}{p} \geq \frac{1}{q} \; \wedge \; \frac{1}{2} \leq \frac{1}{p}  ], \\
\; \chi_{p,q}(\mathcal{P}(^m \mathbb{C}^n)) \sim_{\varepsilon}  n^{(m-1)(1-\frac{1}{q}) + \frac{1}{p}  -\frac{1}{q}} & \text{ for } (III') \; : \; [ 1 - \frac{1}{m} + \frac{1}{mp} \geq \frac{1}{q}\ge\frac1{p} \; \wedge \; \frac{1}{2} < \frac{1}{p} <1 ]. \\
\end{cases}
\]
where $\chi_{p,q}(\mathcal{P}(^m \mathbb{C}^n)) \sim_{\varepsilon} n^{(m-1)(1-\frac{1}{q}) + \frac{1}{p}  -\frac{1}{q}}$ means that
$$  n^{(m-1)(1-\frac{1}{q}) + \frac{1}{p}  -\frac{1}{q}} \ll \chi_{p,q}(\mathcal{P}(^m \mathbb{C}^n)) \ll n^{(m-1)(1-\frac{1}{q}) + \frac{1}{p}  -\frac{1}{q} + \varepsilon},$$
for every $\varepsilon >0$.

Moreover, in $(III')$ for every $\lambda > \frac{1}{p}$ we have
$$  n^{(m-1)(1-\frac{1}{q}) + \frac{1}{p}  -\frac{1}{q}} \ll \chi_{p,q}(\mathcal{P}(^m \mathbb{C}^n)) \ll \log(n)^{m(\frac{1}{q} - \frac{1}{p}) + (\lambda + \frac{1}{p} ) \frac{m^2}{m-1} \frac{p-q}{q(p-1)}}   n^{(m-1)(1-\frac{1}{q}) + \frac{1}{p}  -\frac{1}{q}}.$$
\end{theorem}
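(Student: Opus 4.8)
The plan is to pass to the monomial basis and then determine $\chi_{p,q}\big((z_{\mathbf j})_{\mathbf j\in\mathcal J(m,n)}\big)$, which by Theorem~\ref{incondicionalidadbasemonomial} is all that is needed. Taking the supremum over unimodular multipliers in Definition~\ref{defmix} replaces $\|\sum_{\mathbf j}\theta_{\mathbf j}c_{\mathbf j}z_{\mathbf j}\|_{\mathcal P(^m\ell_q^n)}$ by $\|\widehat P\|_{\mathcal P(^m\ell_q^n)}$, where $\widehat P:=\sum_{\mathbf j}|c_{\mathbf j}|z_{\mathbf j}$; since a polynomial with nonnegative coefficients attains its norm at a point with nonnegative coordinates and $\|P\|_{\mathcal P(^m\ell_p^n)}\le\|\widehat P\|_{\mathcal P(^m\ell_p^n)}$, this gives
\[
\chi_{p,q}\big((z_{\mathbf j})_{\mathbf j}\big)\ \sim\ \sup_{P}\ \frac{\|\widehat P\|_{\mathcal P(^m\ell_q^n)}}{\|P\|_{\mathcal P(^m\ell_p^n)}}\ =\ \sup_{z\in B_{\ell_q^n}}\ \sup_{\|P\|_{\mathcal P(^m\ell_p^n)}\le1}\ \sum_{\mathbf j\in\mathcal J(m,n)}|c_{\mathbf j}|\,|z_{\mathbf j}|.
\]
Thus what must be estimated is exactly the \emph{size} of $B_{\ell_q^n}$ measured against the set of monomial convergence of $\mathcal P(^m\ell_p^n)$, which is where the results of \cite{defant2009domains,bayart2015monomial} enter.

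For the lower bounds, in region $(I)$ nothing is needed beyond $\chi_{p,q}\ge1$. In regions $(II)$, $(III)$ and $(III')$ I would plug the Bayart unimodular polynomials from \eqref{polinomios de Bayart} into the displayed formula: if $P=\sum_{\mathbf j}\varepsilon_{\mathbf j}z_{\mathbf j}$ then $\widehat P=\sum_{\mathbf j}z_{\mathbf j}$, whose $\mathcal P(^m\ell_q^n)$-norm is comparable to $n^{m(1-1/q)}$ (lower bound by evaluating at a constant vector of $\ell_q^n$-norm one, upper bound trivially), so dividing by \eqref{polinomios de Bayart} yields $n^{m(1/p-1/q+1/2)-1/2}$ when $p\ge2$ (region $(II)$) and $n^{(m-1)(1-1/q)+1/p-1/q}$ when $p\le2$ (regions $(III)$, $(III')$). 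The polynomial $\sum_{j}z_{mj+1}\cdots z_{mj+m}$ used in parts $(B)$ and $(F)$ of Theorem~\ref{teorema con A} gives an alternative lower bound that is convenient in some sub-cases.

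The upper bounds are obtained in layers. The cheapest is $\chi_{p,q}\big((z_{\mathbf j})_{\mathbf j}\big)\le A_{p,r}^m(n)\,B_{r,q}^m(n)$ for every $r$ (pass from $\|P\|_{\mathcal P(^m\ell_p^n)}$ to $|P|_r$ and back to $\|\widehat P\|_{\mathcal P(^m\ell_q^n)}$); combined with Theorem~\ref{teorema con A} and Proposition~\ref{asint B} and optimized over $r$, it already yields the matching estimate in region $(III)$ (take $r=p'$) and on parts of $(I)$ and $(II)$. For the rest of $(I)$ and $(II)$ I would use a Bohnenblust--Hille or Hardy--Littlewood inequality at the critical exponent $s$ together with the elementary bound $\sup_{z\in B_{\ell_q^n}}\big(\sum_{\mathbf j}|z_{\mathbf j}|^{s'}\big)^{1/s'}\le n^{\,m\max\{1/s'-1/q,\,0\}}$, which follows from $\sum_{\mathbf j}|z_{\mathbf j}|^{s'}\le\big(\sum_i|z_i|^{s'}\big)^m$ and H\"older; choosing $s$ according to whether $p\ge2m$, $m\le p\le2m$, etc., reproduces the exponent $m(1/p-1/q+1/2)-1/2$ and, on the boundary line $1/q=1/p+(m-1)/(2m)$, the bound $\chi_{p,q}\ll1$. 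The remaining interior cases of region $(II)$ (where $A_{p,r}^m(n)B_{r,q}^m(n)$ overshoots by a genuine power of $n$ for every admissible $r$) are reached by multilinear complex interpolation anchored at the endpoint exponents $p=2$ and $p=2m$, where the previous estimates are sharp; for $m=2$ this is unconditional via Kouba's interpolation theorem for injective tensor products, and for $m\ge3$ it rests on the interpolation of the $m$-fold injective tensors, i.e.\ on (a form of) Problem~\ref{interp compleja}.

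The main obstacle is region $(III')$, where $1<p<2$, the $A_{p,r}^m(n)B_{r,q}^m(n)$ bound loses a genuine power of $n$ and the Hardy--Littlewood inequalities on $\ell_p$ are no longer sharp enough at the critical exponent, so only a bound up to a power of $\log n$ (a fortiori up to $n^\varepsilon$) is available. Here the plan is to replace the Hardy--Littlewood step by the logarithmic estimates on the domain of monomial convergence of $m$-homogeneous polynomials on $\ell_r$, $1<r<2$, of \cite{bayart2015monomial}: for every $\lambda>1/p$ these control $\sum_{\mathbf j}|c_{\mathbf j}|\,|z_{\mathbf j}|$ with a factor $\log(n)^{\lambda\frac{m^2}{m-1}\frac{p-q}{q(p-1)}}$ in place of a power of $n$, which after combining with the coefficient estimates of Theorem~\ref{teorema con A} gives the stated upper bound, while the Bayart lower bound above shows it is optimal up to the logarithmic correction. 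A further, easy-to-underestimate difficulty throughout is to identify the four regions in the first place — that is, to find the correct exponent as a function of $p,q,m$ — and to verify that the upper and lower estimates agree along each common boundary, which forces the somewhat intricate case analysis.
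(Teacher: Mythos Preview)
Your overall strategy --- reduce to the monomial basis via Theorem~\ref{incondicionalidadbasemonomial}, use Bayart's unimodular polynomials for the lower bounds, and combine the $A_{p,r}^m(n)B_{r,q}^m(n)$ bound with Hardy--Littlewood and monomial convergence for the upper bounds --- is the paper's approach, and your treatment of region $(I)$, region $(III)$ (where $r=p'$ works just as well as the paper's $r=q'$), and all the lower bounds is correct.

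The genuine gap is in region $(II)$. You claim that the ``remaining interior cases'' (those with $2<p<2m$ where both your layers overshoot) must be reached by complex interpolation in the variable $p$, anchored at $p=2$ and $p=2m$, and that for $m\ge 3$ this rests on the open Problem~\ref{interp compleja}. This detour is unnecessary, and the theorem is unconditional. The paper interpolates in the \emph{target} variable $q$ instead: fix $p\ge 2$, use region~$(I)$ (monomial convergence) to get $\chi_{p,q_m}\sim 1$ at the boundary $\tfrac{1}{q_m}=\tfrac{1}{p}+\tfrac{m-1}{2m}$, use your layer~1 with $r=1$ to get $\chi_{p,\infty}\ll n^{m(\frac{1}{p}+\frac{1}{2})-\frac{1}{2}}$, and then apply the standard multilinear interpolation theorem to the symmetric $m$-linear form associated to $\sum_{\mathbf j}\theta_{\mathbf j}c_{\mathbf j}z_{\mathbf j}$, viewed as a map on $\ell_q^n\times\cdots\times\ell_q^n$. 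This is ordinary multilinear interpolation of the range spaces and requires no hypothesis on injective tensor products. Equivalently, once $\chi_{p,q_m}\sim 1$ is in hand, homogeneity plus $\|z\|_{q_m}\le n^{1/q_m-1/q}\|z\|_q$ for $z\in B_{\ell_q^n}$ already gives $\chi_{p,q}\ll n^{m(1/q_m-1/q)}=n^{m(\frac{1}{p}-\frac{1}{q}+\frac{1}{2})-\frac{1}{2}}$ throughout $(II)$ --- so your own monomial-convergence reformulation at the start of the proposal, had you pushed it, would have closed the gap without any interpolation in $p$.

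The same interpolation-in-$q$ idea is what makes $(III')$ work: the paper first shows $\chi_{p,q_m}\ll(\log n)^{m\lambda}$ for every $\lambda>\tfrac{1}{p}$, where $\tfrac{1}{q_m}=\tfrac{m-1}{m}+\tfrac{1}{mp}$ (this is Lemma~\ref{inc log}, obtained from the Lorentz-space inclusion $d(w_\lambda,q_m)\subset mon(\mathcal P(^m\ell_p))$ of \cite{bayart2015monomial}), and then interpolates in $q$ between this endpoint and $\chi_{p,p}\sim n^{(m-1)(1-\frac{1}{p})}$ from $(III)$. The specific log-exponent $\lambda\,\tfrac{m^2}{m-1}\,\tfrac{p-q}{q(p-1)}$ is exactly $\theta m\lambda$ for the interpolation parameter $\theta$ determined by $\tfrac{1}{q}=\tfrac{\theta}{q_m}+\tfrac{1-\theta}{p}$; your sketch for $(III')$ is in the right spirit but does not identify this step.
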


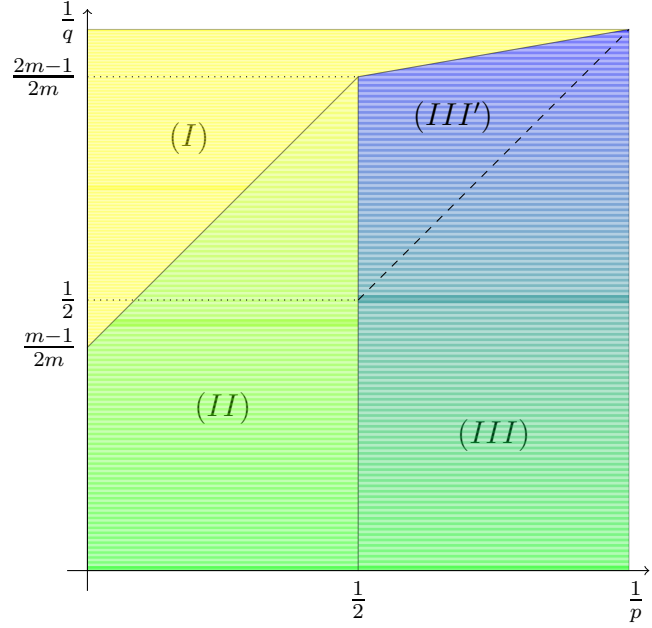
\begin{figure} \label{Figura incondicionalidad}
\begin{center}
\begin{tikzpicture}[scale=0.9]

	\draw (1.5,6.4) node {$(I)$};
  \path[draw, shade, top color=yellow, bottom color=yellow, opacity=.3]
     (4,7.3) node[below] {$ $}  -- (8, 8) -- (0, 8) -- (0,3.3) -- cycle;

    \draw (2,2.4) node {$(II)$};
  \path[draw, shade, top color=yellow, bottom color=green, opacity=.3]
     (0,0) node[below] {$ $}  -- (0, 3.3) -- (4,7.3) -- (4,0) -- cycle;

	\draw (6,2) node {$(III)$};
  \path[draw, shade, top color=blue, bottom color=green, opacity=.3]
    (4,7.3) node[below] {$ $} -- (8,8) -- (8,0) node[below] {$ $}
     -- (4, 0) -- cycle;

\draw[dashed] (4,4) -- (8,8);
	\draw (5.4,6.7) node {$(III')$};

\draw[dotted] (0,4) -- (4,4); 
\draw[dotted] (0,7.3) -- (4,7.3); 

\draw (4,0) node[below] {$\frac12$};
\draw (0,4) node[left] {$\frac12$};
\draw (0,7.3) node[left] {$\frac{2m-1}{2m}$};
\draw (0,3.3) node[left] {$\frac{m-1}{2m}$};

\draw (8.1, 0) node[below] {$\frac1{p} $};
  \draw[->] (-0.3,0) -- (8.3, 0);
\draw (0,8.1) node[left] {$\frac1{q} $};
  \draw[->] (0,-0.3) -- (0, 8.3);
 \end{tikzpicture}
\end{center}

\caption{Graphical overview of the mixed unconditional constant described in Theorem~\ref{cte incond mon}.}

\end{figure}

To prove the theorem we need a lemma and also to recall some results on monomial convergence.
\begin{lemma}\label{ineq chi}
Let $1\le q,p\le\infty$, then we have
\begin{equation}
\chi_{p,q}(\mathcal{P}(^m \mathbb{C}^n)) \ll B_{r,q}^m(n) A_{p,r}^m(n) \; \: \mbox{ for every } \;\; 1 \leq r \leq \infty.
\end{equation}
\end{lemma}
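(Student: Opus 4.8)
The plan is to bound the mixed unconditional constant $\chi_{p,q}(\mathcal{P}(^m \mathbb C^n))$ by factoring the identity through a third space where the coefficients are measured in the $\ell_r$-norm, and the whole point is that once the coefficients are made "unconditional" (i.e.\ we pass to the $|\cdot|_r$ norm of the coefficients), switching signs costs nothing. First I would recall, via Theorem~\ref{incondicionalidadbasemonomial}, that it suffices to estimate $\chi_{p,q}\big((z_{\mathbf j})_{\mathbf j\in\mathcal J(m,n)}\big)$ up to the constant $2^m$, so I only need to work with the monomial basis. For an $m$-homogeneous polynomial $P=\sum_{\mathbf j\in\mathcal J(m,n)} c_{\mathbf j} z_{\mathbf j}$ and any choice of unimodular scalars $(\theta_{\mathbf j})_{\mathbf j}$, write $P_\theta:=\sum_{\mathbf j}\theta_{\mathbf j} c_{\mathbf j} z_{\mathbf j}$. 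The key observation is that the $\ell_r$-norm of the coefficients is sign-invariant: $|P_\theta|_r=|P|_r$ for every such $\theta$.

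The main estimate is then the chain
\begin{equation*}
\|P_\theta\|_{\mathcal P(^m\ell_q^n)} \le B_{r,q}^m(n)\, |P_\theta|_r = B_{r,q}^m(n)\, |P|_r \le B_{r,q}^m(n)\, A_{p,r}^m(n)\, \|P\|_{\mathcal P(^m\ell_p^n)},
\end{equation*}
where the first inequality is the definition of $B_{r,q}^m(n)$ (Problem~\ref{problema}), the equality is the sign-invariance just noted, and the last inequality is the definition of $A_{p,r}^m(n)$. Since this holds for every $P$ and every unimodular $(\theta_{\mathbf j})_{\mathbf j}$, by the definition of $\chi_{p,q}$ applied to the monomial basis we get $\chi_{p,q}\big((z_{\mathbf j})_{\mathbf j\in\mathcal J(m,n)}\big)\le B_{r,q}^m(n)\,A_{p,r}^m(n)$. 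Combining with Theorem~\ref{incondicionalidadbasemonomial}, $\chi_{p,q}(\mathcal P(^m\mathbb C^n))\le 2^m\,B_{r,q}^m(n)\,A_{p,r}^m(n)\ll B_{r,q}^m(n)\,A_{p,r}^m(n)$, which is the claim. Note $r\in[1,\infty]$ is arbitrary, so one is free to optimize over $r$ afterwards.

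There is really no serious obstacle here: the argument is a one-line factorization of the identity $\mathcal P(^m\ell_p^n)\to(\mathcal P(^m\mathbb C^n),|\cdot|_r)\to\mathcal P(^m\ell_q^n)$ through the sign-invariant middle norm, plus the reduction to the monomial basis from Theorem~\ref{incondicionalidadbasemonomial}. The only point that deserves a moment's care is making sure the definition of $\chi_{p,q}((P_i)_{i\in\Lambda})$ is being applied with the correct source and target norms ($\ell_p$ on the right, $\ell_q$ on the left after twisting by $\theta$), and that the constant $2^m$ from Theorem~\ref{incondicionalidadbasemonomial} is absorbed into the $\ll$. I would present the estimate directly for the monomial basis and then invoke Theorem~\ref{incondicionalidadbasemonomial}, rather than trying to run it for an arbitrary basis, since the sign-invariance of $|\cdot|_r$ is exactly what makes the monomial basis the natural one to use.
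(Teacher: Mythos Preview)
Your proof is correct and follows exactly the same route as the paper's: factor through $(\mathcal P(^m\mathbb C^n),|\cdot|_r)$, use the sign-invariance of $|\cdot|_r$, and invoke Theorem~\ref{incondicionalidadbasemonomial} to pass between the monomial basis constant and $\chi_{p,q}(\mathcal P(^m\mathbb C^n))$. One cosmetic remark: since Theorem~\ref{incondicionalidadbasemonomial} gives $\chi_{p,q}(\mathcal P(^m\mathbb C^n))\le \chi_{p,q}\big((z_{\mathbf j})_{\mathbf j}\big)$ directly, the factor $2^m$ you introduce is not actually needed at this step.
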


\begin{proof}
Let $P(z) = \displaystyle\sum_{\mathbf j\in \mathcal{J}(m,n)} c_{\mathbf j} z_{\mathbf j} $ be an $m$-homogeneous polynomial in $n$ variables and  $(\theta_{\mathbf j})_{\mathbf j \in \mathcal J(m,n)}$ be a sequence of complex numbers of modulus one, then
$$ \Big\| \displaystyle\sum_{\mathbf j\in \mathcal{J}(m,n) } \theta_{\mathbf j} c_{\mathbf j} z_{\mathbf j} \Big\|_{\mathcal{P}(^m \ell_q^n)} \leq B_{r,q}^m(n) \Big( \displaystyle\sum_{\mathbf j\in \mathcal{J}(m,n)} |c_{\mathbf j}|^r \Big)^{\frac{1}{r}} \leq  B_{r,q}^m(n)  A_{q,r}^m(n) \Big\| \displaystyle\sum_{\mathbf j\in \mathcal{J}(m,n) } c_{\mathbf j} z_{\mathbf j} \Big\|_{\mathcal{P}(^m \ell_p^n)}, $$ for every $ 1 \leq r \leq \infty$. Thus $\chi_{p,q}((z_{\mathbf j})_{\mathbf j\in \mathcal{J}(m,n)}) \leq B_{r,q}^m(n) A_{p,r}^m(n)$ and the result follows from Theorem~\ref{incondicionalidadbasemonomial}.
\end{proof}

The set of monomial convergence for $m$-homogeneous polynomials over the domain $\ell_p$, denoted by  $mon (\mathcal P(^m \ell_p))$, is defined as
\begin{equation}
 mon((\mathcal P(^m \ell_p)) := \{ z \in \ell_p: \sum_{\alpha \in \N_0^{\N}} \vert c_\alpha(P) z^{\alpha} \vert < \infty, \mbox{ for all } P \in \mathcal P(^m \ell_p))\}
\end{equation}

See \cite{defant2009domains, bayart2015monomial} and the references therein for what is known about these sets. There is a strong relation between monomial convergence and mixed unconditionality.
Let $X,Y$ be Banach sequence spaces. We denote by $X_n$ and $Y_n$ the $n$-dimensional subspaces given by the span of the first $n$ canonical vectors in $X$ and $Y$ respectively. The following result is an immediate consequence of \cite[Theorem 5.2]{defant2009domains}.
\begin{theorem}\label{relacion mon}
 Let $X,Y$ be Banach sequence spaces. Then the following are equivalent
\begin{itemize}
 \item[i)] $ Y\subset mon(\mathcal P(^m X))$,
 \item[ii)] there exists $C=C(m)>0$ such that for every $m$-homogeneous polynomial in $n$ variables  $P(z) = \displaystyle\sum_{\mathbf j\in \mathcal{J}(m,n)} c_{\mathbf j} z_{\mathbf j} $, and every $u\in B_{Y_n}$
 $$\displaystyle\sum_{\mathbf j\in \mathcal{J}(m,n)} |c_{\mathbf j} u_{\mathbf j}| \le C\|P\|_{\mathcal P(^m X_n)}.$$
 \end{itemize}
 In particular,
 \begin{equation}\label{relacion mon y chi}
\ell_q \subset mon(\mathcal P(^m \ell_p))\; \mbox{ if and only if  } \; \chi_{p,q}(\mathcal{P}(^m \mathbb{C}^n)) \sim 1.
 \end{equation}
 \end{theorem}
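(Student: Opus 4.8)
The plan is to obtain the equivalence (i)$\Leftrightarrow$(ii) by a routine translation of \cite[Theorem 5.2]{defant2009domains}, and then to deduce the ``in particular'' clause from a phase-optimization identity combined with Theorem~\ref{incondicionalidadbasemonomial}.

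For the implication (ii)$\Rightarrow$(i) I would argue directly. Given $z\in Y$, rescale so that $z\in B_Y$; for $P\in\mathcal P(^mX)$ let $P_n$ denote the restriction of $P$ to the first $n$ coordinates, an $m$-homogeneous polynomial in $n$ variables with $\|P_n\|_{\mathcal P(^mX_n)}\le\|P\|_{\mathcal P(^mX)}$ because $X_n\hookrightarrow X$ isometrically. Applying (ii) to $P_n$ with $u$ the $n$-th truncation of $z$ gives $\sum_{\mathbf j\in\mathcal J(m,n)}|c_{\mathbf j}(P)z_{\mathbf j}|\le C\|P\|_{\mathcal P(^mX)}$ for every $n$, and letting $n\to\infty$ yields $z\in mon(\mathcal P(^mX))$. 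For (i)$\Rightarrow$(ii) one argues by contradiction: if (ii) failed, then for each $k$ there would be a finite block of coordinates $I_k$ (the $I_k$ taken pairwise disjoint), an $m$-homogeneous polynomial $Q_k$ supported on $I_k$ with $\|Q_k\|_{\mathcal P(^mX_{I_k})}\le1$, and $u_k\in B_{Y_{I_k}}$ with $\sum_{\mathbf j}|c_{\mathbf j}(Q_k)(u_k)_{\mathbf j}|\ge 4^k$; setting $P:=\sum_k 2^{-k}Q_k$ and $z:=\sum_k 2^{-k/m}u_k$ one checks, using disjointness of supports and that $X,Y$ are Banach sequence spaces, that $P\in\mathcal P(^mX)$ and $z\in Y$ while $\sum_\alpha|c_\alpha(P)z^\alpha|=\infty$, contradicting (i). This gluing is exactly what is carried out in \cite[Theorem 5.2]{defant2009domains}, so I would invoke that result rather than reproduce it.

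For the ``in particular'' statement I would specialize to $X=\ell_p$ and $Y=\ell_q$, so $X_n=\ell_p^n$, $Y_n=\ell_q^n$ and $B_{Y_n}=B_{\ell_q^n}$. Fix a coefficient vector $(c_{\mathbf j})_{\mathbf j\in\mathcal J(m,n)}$. Given $u\in B_{\ell_q^n}$, picking unimodular scalars $\theta_{\mathbf j}$ so that each $\theta_{\mathbf j}c_{\mathbf j}u_{\mathbf j}$ is a nonnegative real shows $\sum_{\mathbf j}|c_{\mathbf j}u_{\mathbf j}|=\big|\sum_{\mathbf j}\theta_{\mathbf j}c_{\mathbf j}u_{\mathbf j}\big|\le\sup_{|\theta_{\mathbf j}|=1}\big\|\sum_{\mathbf j}\theta_{\mathbf j}c_{\mathbf j}z_{\mathbf j}\big\|_{\mathcal P(^m\ell_q^n)}$; conversely $\big|\sum_{\mathbf j}\theta_{\mathbf j}c_{\mathbf j}z_{\mathbf j}\big|\le\sum_{\mathbf j}|c_{\mathbf j}|\,|z_{\mathbf j}|$ for every $z\in B_{\ell_q^n}$ and every choice of $\theta$, and the right-hand side is unchanged if $z$ is replaced by $(|z_1|,\dots,|z_n|)\in B_{\ell_q^n}$. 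Hence $\sup_{u\in B_{\ell_q^n}}\sum_{\mathbf j}|c_{\mathbf j}u_{\mathbf j}|=\sup_{|\theta_{\mathbf j}|=1}\big\|\sum_{\mathbf j}\theta_{\mathbf j}c_{\mathbf j}z_{\mathbf j}\big\|_{\mathcal P(^m\ell_q^n)}$, so the optimal constant $C$ in (ii) for this $X,Y$ is precisely $\chi_{p,q}\big((z_{\mathbf j})_{\mathbf j\in\mathcal J(m,n)}\big)$. By Theorem~\ref{incondicionalidadbasemonomial} this quantity is comparable, within the factor $2^m$ and uniformly in $n$, to $\chi_{p,q}(\mathcal P(^m\mathbb C^n))$. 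Consequently $\ell_q\subset mon(\mathcal P(^m\ell_p))$ --- equivalently, (ii) holds with a constant $C=C(m)$ independent of $n$ --- if and only if $\chi_{p,q}(\mathcal P(^m\mathbb C^n))$ stays bounded in $n$, i.e.\ $\chi_{p,q}(\mathcal P(^m\mathbb C^n))\sim1$.

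The hard part is the gluing behind (i)$\Rightarrow$(ii): verifying that the assembled polynomial and point genuinely lie in $\mathcal P(^mX)$ and $Y$. Since this is handled once and for all in \cite[Theorem 5.2]{defant2009domains}, the actual work here reduces to the elementary phase-optimization identity of the last step, plus the bookkeeping needed to match the two formulations.
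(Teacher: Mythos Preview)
Your proposal is correct and matches the paper's approach: the paper does not give a proof at all, stating only that the result ``is an immediate consequence of \cite[Theorem 5.2]{defant2009domains},'' so your citation of that reference for (i)$\Leftrightarrow$(ii) together with the phase-optimization identity and Theorem~\ref{incondicionalidadbasemonomial} for the ``in particular'' clause is exactly the intended argument, just spelled out in more detail than the authors chose to.
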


 Let $\mathbf p=(p_j)_{j\in\mathbb N}$ be the  sequence of prime numbers. For a real function $f$, we define $f(\mathbf p):=(f(p_j))_{j\in\mathbb N}$, and we denote by $f(\mathbf p)\cdot\ell_p$ the set $\{(f(p_j)x_j)_{j\in\mathbb N}:\, x\in\ell_p\}$.
 \begin{lemma} \label{inclusion mon}
  Let  $\frac{1}{2} \leq \frac{1}{p}$ and $\frac{1}{q_m}= 1 - \frac{1}{m} + \frac{1}{mp}$. For $\lambda>\frac1{p}$, we define the sequence $w_{\lambda}:=(w_{\lambda}(j))_{j\in\mathbb N}$ where $w_{\lambda}(j)=\log(p_j)^{q_m \lambda + 1}$ and define the Lorentz Banach sequence space
  $$
d(w_{\lambda},q_m)=\Big\{y=(y_j)_{j\in\mathbb N}:\, \|y\|_{d(w_{\lambda},q_m)}:=\left(\sum_{j}|y_j^*|^{q_m} w_{\lambda}(j) \right)^{1/q_m} < \infty \Big\},
$$
where $y^*$ denotes the decreasing rearrangement of $y$. Then, $$d(w_{\lambda},q_m)\subset mon(\mathcal P(^m \ell_p)).$$
 \end{lemma}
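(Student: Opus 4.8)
\textbf{Proof plan for Lemma~\ref{inclusion mon}.}

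The plan is to reduce the claimed inclusion $d(w_\lambda,q_m)\subset mon(\mathcal P(^m\ell_p))$ to a quantitative polynomial estimate of the type described in Theorem~\ref{relacion mon}, and then to obtain that estimate by interpolating the known Hardy--Littlewood-type bounds for $|P|_r$ against $\|P\|_{\mathcal P(^m\ell_p^n)}$. Concretely, by Theorem~\ref{relacion mon} it suffices to show that there is a constant $C=C(m)$ such that for every $m$-homogeneous polynomial $P(z)=\sum_{\mathbf j\in\mathcal J(m,n)}c_{\mathbf j}z_{\mathbf j}$ in $n$ variables and every $u\in B_{d(w_\lambda,q_m)_n}$ one has $\sum_{\mathbf j}|c_{\mathbf j}u_{\mathbf j}|\le C\|P\|_{\mathcal P(^m\ell_p^n)}$. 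The exponent $q_m$ is exactly chosen so that $\frac1{q_m}=1-\frac1m+\frac1{mp}$; note that for $\frac12\le\frac1p$ one has $2m\le p'$ in the relevant regime, and that $q_m$ is (up to the $m$-homogeneity bookkeeping) the conjugate of the Hardy--Littlewood exponent associated to $\ell_p$. So the first step is to pin down, via H\"older, that $\sum_{\mathbf j}|c_{\mathbf j}u_{\mathbf j}|\le |P|_{q_m'}\,\|u\|_{\ell_{q_m}}$-type inequalities hold at the level of monomials, and then to upgrade $\|u\|_{\ell_{q_m}}$ to the weaker Lorentz norm $\|u\|_{d(w_\lambda,q_m)}$ by paying only a logarithmic factor.

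The core of the argument is the following interpolation-flavoured estimate, in the spirit of the Pisier/Bayart approach to weighted summability (as in \cite{bayart2002hardy,defant2015p,bayart2015monomial}): one shows that for $u$ with decreasing rearrangement $u^*$,
\[
\sum_{\mathbf j\in\mathcal J(m,n)}|c_{\mathbf j}u_{\mathbf j}|\;\ll\; \|P\|_{\mathcal P(^m\ell_p^n)}\,\Big(\sum_{k\ge1} (u_k^*)^{q_m}\log(k+1)^{q_m\lambda}\Big)^{1/q_m},
\]
which is precisely $\|u\|_{d(w_\lambda,q_m)}$ up to identifying $\log(k+1)$ with $\log(p_k)$ (the $k$-th prime), since $\log p_k\sim\log k$. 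To get this one dyadically decomposes the index set according to the size of $u^*$, applies on each block the Hardy--Littlewood inequality together with the sub-polynomial growth of the partial sums $\sum_{|\alpha|=m,\ \alpha\ \text{supported on first } N} 1\sim N^m$, and sums a geometric-type series in which the logarithmic weight compensates exactly the borderline divergence. The key input making the borderline case work is the Bohnenblust--Hille/Hardy--Littlewood exponent being optimal, so one only loses a logarithm (this is where $\lambda>\frac1p$ enters, to make the series converge). I would organize this as: (i) reduce to the monomial estimate via Theorem~\ref{relacion mon}; (ii) H\"older at monomial level to replace $|P|$ by a coefficient $\ell_r$-norm with $r=q_m'$; (iii) invoke the relevant Hardy--Littlewood bound from Theorem~\ref{HLineq} (for $p$ in the stated range $\frac1p\ge\frac12$, so $p\le2$, one uses part $(i)$ together with $A_{p,r}^m(n)\sim1$ on the appropriate piece of region $(A)$); (iv) perform the dyadic decomposition on $u^*$ and sum, using $\log p_k\sim\log k$ and $\lambda>\frac1p$ to close.

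The main obstacle I anticipate is step (iv): controlling the borderline summation so that the weight $w_\lambda$ appears with the exact power $q_m\lambda$ and the constant stays independent of $n$. One must be careful that the Hardy--Littlewood constant $C_{m,p}$ is $n$-independent (which it is, by Theorem~\ref{HLineq}), and that the dyadic blocks of $u^*$ interact correctly with the combinatorial count of monomials supported on a given set of variables; a naive estimate loses a power of $n$ rather than a logarithm, so the decomposition has to be done on the variable index set, grouping the $c_{\mathbf j}$ by the largest variable index appearing in $\mathbf j$, and then summing in $k$ with the logarithmic weight absorbing the tail. A secondary technical point is passing from finite $n$ to the infinite-dimensional statement $d(w_\lambda,q_m)\subset mon(\mathcal P(^m\ell_p))$, which is routine once the $n$-uniform monomial estimate is in hand (truncate, apply the finite estimate, let $n\to\infty$, and use that $d(w_\lambda,q_m)\hookrightarrow\ell_p$ so that the ambient convergence makes sense).
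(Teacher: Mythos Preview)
Your proposal takes a substantially different route from the paper, and the route you sketch has concrete gaps.

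\textbf{What the paper does.} The paper's argument is short and indirect: it quotes \cite[Theorem~5.3]{bayart2015monomial}, which asserts that for $p\le2$ and any $\varepsilon>\frac1p$ one has
\[
\frac{1}{\mathbf p^{\sigma_m}\log(\mathbf p)^\varepsilon}\cdot\ell_p\ \subset\ mon(\mathcal P(^m\ell_p)),\qquad \sigma_m=\tfrac{m-1}{m}\bigl(1-\tfrac1p\bigr),
\]
and then shows, by a direct pointwise estimate using the Prime Number Theorem, that any $y\in d(w_\lambda,q_m)$ satisfies $y^*\in \frac{1}{\mathbf p^{\sigma_m}\log(\mathbf p)^{\varepsilon_0}}\cdot\ell_p$ for some $\varepsilon_0>\frac1p$. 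No Hardy--Littlewood bound, no dyadic decomposition, and no use of Theorem~\ref{relacion mon} is needed: all the hard analysis is hidden in the cited result.

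\textbf{Where your plan breaks.} You propose to use Theorem~\ref{HLineq} part $(i)$ and the fact that $A^m_{p,r}(n)\sim1$ ``on the appropriate piece of region $(A)$''. Neither tool is available here. For $\frac12\le\frac1p$ (i.e.\ $p\le2$) and $m\ge3$ one has $p<m$, so Theorem~\ref{HLineq}$(i)$, which requires $m\le p\le2m$, does not apply at all. Likewise, with $r=q_m'=mp'$ you get $\frac1r=\frac1{mp'}\le\frac1{2m}$ and $\frac{m}{p}\ge1>1-\frac1r$, so $(\frac1p,\frac1r)$ is not in region $(A)$ of Theorem~\ref{teorema con A}; it lies in region $(E)$, where $A^m_{p,r}(n)$ grows like $n^{(m-1)/r}$, not like $1$. (Your claim ``$2m\le p'$ in the relevant regime'' is also false in general: for $p=2$ and $m=2$ one has $p'=2<4=2m$.) So the step ``H\"older + Hardy--Littlewood gives an $n$-free bound'' is simply not available, and this is not a technicality that the dyadic decomposition will absorb as a logarithm; you would lose a genuine power of $n$ at this point.

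What would actually be needed to carry out your programme is essentially the full proof of \cite[Theorem~5.3]{bayart2015monomial} (a recursive comparison between $|P|_r$-norms restricted to the first $N$ and first $2N$ variables, iterated along a dyadic scale, using hypercontractivity-type estimates specific to $p\le2$), not Theorem~\ref{HLineq}. That is a nontrivial piece of work which your sketch does not supply; the paper avoids it entirely by citing the result and reducing the lemma to an elementary inclusion between sequence spaces.
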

\begin{proof}
Note that $y \in mon(\Pp(^m \ell_p))$ if and only if its decreasing rearrangement $y^*$ belongs to $ mon(\Pp(^m \ell_p))$.
 In \cite[Theorem 5.3]{bayart2015monomial} it was proved that  for any $\varepsilon > \frac{1}{p}$ we have,
 $$
 \frac{1}{\mathbf p^{\sigma_m} \log(\mathbf p)^\varepsilon} \cdot \ell_p \subset mon(\Pp(^m \ell_p)),
 $$
 where $\sigma_m = \left( \frac{m-1}{m} \right) \left(1 - \frac{1}{p} \right) $.
 We will show that for every $y \in d(w_{\lambda},q_m)$ we have that  $y^* \in \frac{1}{\mathbf p^{\sigma_m} \log(\mathbf p)^{\varepsilon_0}} \cdot \ell_p$ for some $\varepsilon_0 > \frac{1}{p}$.

%

%

Notice that
\begin{equation} \label{cota coef lorentz}
 \log(p_n)^{\lambda q_m+1} (y_n^*)^{q_m} \ll \frac{1}{n},
\end{equation}
indeed, if not there is a subsequence $(n_k)$ and constants $C_k\to\infty$ such that 
$$ \log(p_{n_k})^{\lambda q_m+1} (y_{n_k}^*)^{q_m} \ge  C_k\frac{1}{{n_k}}.$$ 
By the Prime Number Theorem ($p_n \sim n \log(n)$) and some elementary computations, we have that for sufficiently large $k$, $\log(p_j)/\log(p_{n_k})\ge 1/2$ if $j\ge n_k/2$. Thus, 
\begin{align*}
\sum_{j=1}^{n_k}\log(p_j)^{\lambda q_m+1} (y_j^*)^{q_m} &  \ge (y_{n_k}^*)^{q_m} \sum_{n_k/2\le j \le n_k}\log(p_j)^{\lambda q_m+1} \\
& \ge \frac12 (y_{n_k}^*)^{q_m} \log(p_{n_k})^{\lambda q_m+1} \cdot \frac{n_k}{2}\\
& \ge \frac{C_k}{4} \to \infty,
\end{align*} 
which is a contradiction since $y\in d(w_{\lambda},q_m)$. Therefore, by \eqref{cota coef lorentz},
$$  y_n^* \ll \frac{1}{n^{\frac{1}{q_m}}} \; \frac1{log(p_n)^{\frac{1}{q_m}+\lambda}}\le \frac{1}{n^{\frac{1}{q_m}}} \; \frac1{log(n)^{\frac{1}{q_m}+\lambda}}.$$
If we take $\varepsilon_0, \tilde \lambda$ such that $\frac{1}{p}<\varepsilon_0<\tilde \lambda<\lambda$, and use the Prime Number Theorem  we have
\begin{align*}
p_n^{\sigma_m} \log( p_n)^{\varepsilon_0} y_n^* &\ll \frac{p_n^{\sigma_m}\log(p_n)^{\varepsilon_0}}{n^{\frac{1}{q_m}}  \log(n)^{\frac{1}{q_m} + \lambda }} \\
& \ll \frac{n^{\sigma_m} \log(n)^{\sigma_m+\tilde\lambda}}{n^{\frac{1}{q_m}} \log(n)^{\frac{1}{q_m} + \lambda}}  \\
& = n^{- \frac{1}{p}} \log(n)^{ -\frac{1}{p}+\tilde\lambda - \lambda}.
\end{align*}
Since the sequence $ (n^{- \frac{1}{p}} \log(n)^{ -\frac{1}{p}+\tilde\lambda-\lambda})_{n \ge 1}$ is in $\ell_p$, we conclude the proof.
\end{proof}

%

\begin{lemma} \label{inc log}
Let $\frac{1}{2} \leq \frac{1}{p}$ and $ \frac{1}{q_m}= 1 - \frac{1}{m} + \frac{1}{mp}  $. Then if $\lambda>\frac1{p}$,
 $$
 \chi_{p,q_m}(\mathcal{P}(^m \mathbb{C}^n)) \ll  \log(n)^{ m ( \lambda + \frac{1}{q_m})}.
$$
\end{lemma}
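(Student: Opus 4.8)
The plan is to combine the Lorentz‑space inclusion of Lemma~\ref{inclusion mon} with Theorem~\ref{relacion mon} and a direct estimate on the monomial basis. By Lemma~\ref{inclusion mon}, $d(w_\lambda,q_m)\subset mon(\mathcal P(^m \ell_p))$; hence Theorem~\ref{relacion mon}, applied with $X=\ell_p$ and $Y=d(w_\lambda,q_m)$, gives a constant $C=C(m)>0$ such that
$$
\sum_{\mathbf j\in\mathcal J(m,n)}|c_{\mathbf j}|\,|u_{\mathbf j}|\le C\,\Big\|\sum_{\mathbf j\in\mathcal J(m,n)}c_{\mathbf j}z_{\mathbf j}\Big\|_{\mathcal P(^m\ell_p^n)}
$$
for every $m$-homogeneous polynomial $\sum_{\mathbf j}c_{\mathbf j}z_{\mathbf j}$ in $n$ variables and every $u\in B_{(d(w_\lambda,q_m))_n}$.

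First I would compare the $n$-dimensional section $(d(w_\lambda,q_m))_n$ with $\ell_{q_m}^n$. Since $w_\lambda$ is non-decreasing and, by the Prime Number Theorem, $\log(p_n)\sim\log n$, we get $w_\lambda(j)\le w_\lambda(n)=\log(p_n)^{q_m\lambda}\ll(\log n)^{q_m\lambda}$ for all $1\le j\le n$. Therefore, for $y\in\mathbb C^n$ regarded inside $d(w_\lambda,q_m)$, using $\sum_j|y_j^*|^{q_m}=\|y\|_{q_m}^{q_m}$,
$$
\|y\|_{d(w_\lambda,q_m)}^{q_m}=\sum_{j=1}^n|y_j^*|^{q_m}\,w_\lambda(j)\ll(\log n)^{q_m\lambda}\,\|y\|_{q_m}^{q_m},
$$
so $\|y\|_{d(w_\lambda,q_m)}\ll(\log n)^{\lambda}\,\|y\|_{q_m}$; equivalently, there is $K=K(m,\lambda)>0$ with $B_{\ell_{q_m}^n}\subset K(\log n)^{\lambda}\,B_{(d(w_\lambda,q_m))_n}$.

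Next I would bound $\chi_{p,q_m}\big((z_{\mathbf j})_{\mathbf j\in\mathcal J(m,n)}\big)$ directly. Fix coefficients $(c_{\mathbf j})_{\mathbf j}$, unimodular scalars $(\theta_{\mathbf j})_{\mathbf j}$, and $z\in B_{\ell_{q_m}^n}$, and set $u=(u_k)_{k=1}^n$ with $u_k=|z_k|/(K(\log n)^{\lambda})$, so that $u\in B_{(d(w_\lambda,q_m))_n}$ by the previous step (both norms depend only on the moduli of the coordinates). Since $|z_{\mathbf j}|=|z_{j_1}|\cdots|z_{j_m}|=\big(K(\log n)^{\lambda}\big)^{m}u_{\mathbf j}$, the triangle inequality together with the displayed estimate gives
$$
\Big|\sum_{\mathbf j}\theta_{\mathbf j}c_{\mathbf j}z_{\mathbf j}\Big|\le\sum_{\mathbf j}|c_{\mathbf j}|\,|z_{\mathbf j}|=\big(K(\log n)^{\lambda}\big)^{m}\sum_{\mathbf j}|c_{\mathbf j}|\,|u_{\mathbf j}|\le C\,\big(K(\log n)^{\lambda}\big)^{m}\,\Big\|\sum_{\mathbf j}c_{\mathbf j}z_{\mathbf j}\Big\|_{\mathcal P(^m\ell_p^n)}.
$$
Taking the supremum over $z\in B_{\ell_{q_m}^n}$ yields $\chi_{p,q_m}\big((z_{\mathbf j})_{\mathbf j\in\mathcal J(m,n)}\big)\ll(\log n)^{m\lambda}$, and, since the monomials form a basis of $\mathcal P(^m\mathbb C^n)$ (or by Theorem~\ref{incondicionalidadbasemonomial}), $\chi_{p,q_m}(\mathcal P(^m\mathbb C^n))\ll(\log n)^{m\lambda}$.

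I expect the only subtle point to be the passage from Lemma~\ref{inclusion mon} — an inclusion of infinite-dimensional sequence spaces whose weight grows along the primes — to the quantitative finite-dimensional comparison of norms on $\mathbb C^n$: one has to check that truncating to the first $n$ coordinates costs only the worst value $w_\lambda(n)\sim(\log n)^{q_m\lambda}$ of the weight, which is precisely where the Prime Number Theorem is used. The remaining steps are homogeneity of the polynomial and elementary bookkeeping.
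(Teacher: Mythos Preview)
Your argument is correct and follows essentially the same route as the paper: apply Lemma~\ref{inclusion mon} together with Theorem~\ref{relacion mon}, compare $\ell_{q_m}^n$ with the $n$-dimensional section of the Lorentz space via the bound $w_\lambda(j)\le w_\lambda(n)\ll(\log n)^{q_m\lambda}$ (Prime Number Theorem), and use $m$-homogeneity to pull out the factor $(\log n)^{m\lambda}$. The only cosmetic difference is that the paper introduces an auxiliary $\tilde\lambda$ with $\tfrac1p<\tilde\lambda<\lambda$ before invoking the Prime Number Theorem, which is not strictly necessary for the $\ll$ conclusion.
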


\begin{proof}
 Let $\lambda> \tilde \lambda > \frac1{p}$. If $u \in B_{\ell_{q_m}^n}$ then, by the Prime Number Theorem, we know that
 $$ \Vert u \Vert_{d(w_{\tilde\lambda},q_m)_n} \leq \log(p_n)^{\tilde \lambda + \frac{1}{q_m}} \ll \log(n)^{\lambda+ \frac{1}{q_m}}.$$
 For every $m$-homogeneous polynomial in $n$ variables  $P(z) = \displaystyle\sum_{\mathbf j\in \mathcal{J}(m,n)} c_{\mathbf j} z_{\mathbf j} $ and every $u\in B_{\ell_{q_m}^n}$,  by Lemma \ref{inclusion mon} and Theorem \ref{relacion mon}, we have that
  \begin{align*}
\displaystyle\sum_{\mathbf j\in \mathcal{J}(m,n)} |c_{\mathbf j} u_{\mathbf j}| & \le C \Vert u \Vert_{d(w_{\tilde \lambda},q_m)_n}^m \|P\|_{\mathcal P(^m \ell_p^n)} \\
& \ll \log(n)^{m ( \lambda + \frac{1}{q_m})} \|P\|_{\mathcal P(^m \ell_p^n)},
 \end{align*}
and from this inequality it is easy  to conclude the proof.
 \end{proof}

We now prove Theorem~\ref{cte incond mon}.

\begin{proof}[Proof of Theorem~\ref{cte incond mon}]
The proof is divided in cases.

$\bullet (I):$
Let $p \geq 2$. By \cite[Example 4.6]{defant2009domains} we know that
$\ell_{q_m} \subset mon(\mathcal P(^m \ell_p))$ where
$$ \frac{1}{q_m} = \frac{1}{p} + \frac{m-1}{2m}.$$
On the hand, if $p \leq 2$, by \cite[Theorem 5.1]{bayart2015monomial} we know that
$\ell_{q_m - \varepsilon} \subset mon(\mathcal P(^m \ell_p))$ for every $\varepsilon >0$ where
$$ \frac{1}{q_m} = \frac{m-1}{m} + \frac{1}{mp}.$$

Therefore, by the statement in \eqref{relacion mon y chi} and monotonicity we known that $\chi_{p,q}(\mathcal{P}(^m \mathbb{C}^n)) \sim 1$ in region
$$(I): \; [\frac{1}{p} + \frac{m-1}{2m} \leq \frac{1}{q} \wedge \frac{1}{p} \leq \frac{1}{2} ] \text{ or } [ \frac{m-1}{m} + \frac{1}{mp} < \frac{1}{q} \wedge \frac{1}{2} \leq \frac{1}{p} ].$$


$\bullet (II):$ We know by $(I)$ that $\chi_{p,q_m}(\mathcal{P}(^m \mathbb{C}^n))\sim 1$, for $\frac1{q_m}=\frac1{p}+\frac{m-1}{2m}$.
We now  estimate $\chi_{p,\infty}(\mathcal{P}(^m \mathbb{C}^n))$ for $0\le \frac1{p}\le \frac12$.
 Take $ {r}=1 $.
By  Proposition~\ref{asint B} and Theorem~\ref{teorema con A} $(C)$ we have
$$
B_{r,\infty}^m(n) \sim 1 \; , \; A_{p,r}^m(n) \sim n^{m(\frac{1}{p}+\frac{1}{2})-\frac{1}{2}}.
$$
Using Lemma~\ref{ineq chi}
$$
\chi_{p,\infty}(\mathcal{P}(^m \mathbb{C}^n)) \ll n^{m(\frac{1}{p}+\frac{1}{2})-\frac{1}{2}}.
$$

  Take a polynomial $P\in\mathcal{P}(^m \mathbb{C}^n)$, $P= \displaystyle\sum_{\mathbf j\in \mathcal{J}(m,n) }  c_{\mathbf j} z_{\mathbf j}$ with $\|P\|_{\mathcal P(^m\ell^n_p)}=1$ and take signs $(\theta_{\mathbf j})_{\mathbf j\in \mathcal{J}(m,n)}$. Therefore since
\begin{align*}
\Big\|\sum_{\mathbf j\in \mathcal{J}(m,n) } \theta_{\mathbf j} c_{\mathbf j} z_{\mathbf j}\Big\|_{\mathcal P(^m\ell^n_{q_m})}&\le \chi_{p,q_m}(\mathcal{P}(^m \mathbb{C}^n))\sim 1,\\
\Big\|\sum_{\mathbf j\in \mathcal{J}(m,n) } \theta_{\mathbf j} c_{\mathbf j} z_{\mathbf j}\Big\|_{\mathcal P(^m\ell^n_{\infty})}&\le  \chi_{p,\infty}(\mathcal{P}(^m \mathbb{C}^n))\ll n^{m(\frac{1}{p}+\frac{1}{2})-\frac{1}{2}},
\end{align*}
we have, by \eqref{P vs T unif} and The Multilinear Interpolation Theorem (see \cite[Section 4.4]{BerLof76}) that for $\theta \in (0,1)$ and $\frac1{q}=\frac{\theta}{q_m}+\frac{1-\theta}{\infty},$
$$
\|\sum_{\mathbf j\in \mathcal{J}(m,n) } \theta_{\mathbf j} c_{\mathbf j} z_{\mathbf j}\|_{\mathcal P(^m\ell^n_{q})}\ll n^{(1-\theta)[m(\frac{1}{p}+\frac{1}{2})-\frac{1}{2}]}=n^{m (\frac{1}{p}-\frac{1}{q}+\frac{1}{2}) - \frac{1}{2}}.
$$

For the lower bound let $P(z) = \displaystyle\sum_{\alpha \in \Lambda(m,n)} \varepsilon_\alpha z^\alpha $ be a unimodular polynomial as in \eqref{polinomios de Bayart} with $p\ge 2$, then if $w=(\frac1{n^{1/q}},\dots,\frac1{n^{1/q}})\in S_{\ell_q}$, we have
$$
\chi_{p,q}(\mathcal{P}(^m \mathbb{C}^n)) \gg \frac{\| \displaystyle\sum_{\alpha \in \Lambda(m,n)} z^\alpha \|_{\mathcal{P}(^m \ell_q^n)}}{\| \displaystyle\sum_{\alpha \in \Lambda(m,n)} \varepsilon_\alpha z^\alpha \|_{\mathcal{P}(^m \ell_p^n)}} \gg \frac{| \displaystyle\sum_{\alpha \in \Lambda(m,n)} w^\alpha |}{n^{m(\frac{1}{2}-\frac{1}{p}) + \frac{1}{2}}} \gg \frac{n^{m(1-\frac{1}{q})}}{n^{m(\frac{1}{2}-\frac{1}{p}) + \frac{1}{2}}} = n^{m(\frac{1}{p}-\frac{1}{q}+\frac{1}{2}) - \frac{1}{2}}.$$

$\bullet (III):$ For $[ \frac{1}{2} \leq \frac{1}{p} \; \wedge \; \frac{1}{q} \leq \frac{1}{p} ]$ let us take $ \frac{1}{r} = 1 - \frac{1}{q}$. Note that $\frac1{r}\ge 1 - \frac{1}{p}$. Then by Proposition~\ref{asint B} and Theorem~\ref{teorema con A} $(D)$,
$$
B_{r,q}^m(n) \sim 1 \; , \; A_{p,r}^m(n) \sim n^{(m-1)(1-\frac{1}{q})+\frac{1}{p}-\frac1{q}},
$$
and therefore
$$
\chi_{p,q}(\mathcal{P}(^m \mathbb{C}^n)) \ll n^{(m-1)(1 - \frac{1}{q}) + \frac{1}{p} - \frac{1}{q} }.
$$
For the lower bound let $P(z) = \displaystyle\sum_{\alpha \in \Lambda(m,n)} \varepsilon_\alpha z^\alpha $ be a unimodular polynomial as in \eqref{polinomios de Bayart} with $ 1 \leq p \leq 2$, we have
\begin{equation} \label{Bayart incond}
\chi_{p,q}(\mathcal{P}(^m \mathbb{C}^n))\gg \frac{\| \displaystyle\sum_{\alpha \in \Lambda(m,n)} z^\alpha \|_{\mathcal{P}(^m \ell_q^n)}}{\| \displaystyle\sum_{\alpha \in \Lambda(m,n)} \varepsilon_\alpha z^\alpha \|_{\mathcal{P}(^m \ell_p^n)}} \gg \frac{n^{m(1-\frac{1}{q})}}{n^{1-\frac{1}{p}}} = n^{(m-1)(1-\frac{1}{q}) + \frac{1}{p} - \frac{1}{q}}.
\end{equation}
$\bullet (III'):$ Let $\frac{1}{2} < \frac{1}{p} < 1$.
For every $\lambda > \frac{1}{p}$ we know by Lemma~\ref{inc log}  that $\chi_{p,q_m}(\mathcal{P}(^m \mathbb{C}^n))\ll \log(n)^{ m (\lambda+\frac1{q_m})}$,  where  $\frac1{q_m}=1 - \frac{1}{m}+\frac{1}{mp}$.
On the other hand, by $(III)$ we know that $\chi_{p,p}(\mathcal{P}(^m \mathbb{C}^n))\sim n^{(m-1)(1-\frac1{p})}$.

As in $(II)$ we will use the The Multilinear Interpolation Theorem: take a polynomial $P\in\mathcal{P}(^m \mathbb{C}^n)$, $P= \displaystyle\sum_{\mathbf j\in \mathcal{J}(m,n) }  c_{\mathbf j} z_{\mathbf j}$ with $\|P\|_{\mathcal P(^m\ell^n_p)}=1$ and take signs $(\theta_{\mathbf j})_{\mathbf j\in \mathcal{J}(m,n)}$. Therefore since
\begin{align*}
\Big\|\sum_{\mathbf j\in \mathcal{J}(m,n) } \theta_{\mathbf j} c_{\mathbf j} z_{\mathbf j}\Big\|_{\mathcal P(^m\ell^n_{q_m})}&\le \chi_{p,q_m}(\mathcal{P}(^m \mathbb{C}^n))\ll \log(n)^{m( \lambda + \frac{1}{q_m}) },\\
\Big\|\sum_{\mathbf j\in \mathcal{J}(m,n) } \theta_{\mathbf j} c_{\mathbf j} z_{\mathbf j}\Big\|_{\mathcal P(^m\ell^n_p)}&\le  \chi_{p,p}(\mathcal{P}(^m \mathbb{C}^n))\sim n^{(m-1)(1-\frac1{p})},
\end{align*}
we have, by \eqref{P vs T unif} and \cite[Section 4.4]{BerLof76}, that for $\theta \in (0,1)$ and $\frac1{q}=\frac{\theta}{q_m}+\frac{1-\theta}{p},$
\begin{align*}
\|\sum_{\mathbf j\in \mathcal{J}(m,n) } \theta_{\mathbf j} c_{\mathbf j} z_{\mathbf j}\|_{\mathcal P(^m\ell^n_{q})} 
& \ll \log(n)^{\theta m ( \lambda + \frac{1}{q_m})} n^{(1-\theta)[(m-1)(1-\frac1{p})]}  \\
& = \log(n)^{\theta m ( \lambda + \frac{1}{q_m})} \;  n^{(m-1)(1-\frac{1}{q}) + \frac{1}{p}  -\frac{1}{q}}, 
\end{align*}
with $\theta=\frac{m}{m-1}p'(\frac1{q}-\frac1{p})$.
This concludes the upper bound for the region $(III'): [ 1 - \frac{1}{m} + \frac{1}{mp} \geq \frac{1}{q}\ge\frac1{p} \; \wedge \; \frac{1}{2} < \frac{1}{p} <1 ]$.

The lower bound is exactly as in \eqref{Bayart incond}.
\end{proof}


\section{Some applications to the multivariable von Neumann's inequality} \label{applicvn}
A classical inequality in operator theory, due to  von Neumann \cite{vNe51}, asserts that if $T$ is  a linear
contraction on a complex Hilbert space $\mathcal H$ (i.e., its operator norm is less than or equal to one)
then
$$
\|p(T)\|_{\mathcal L(\mathcal H)}\le  \sup\{|p(z)| : z\in \mathbb C, \, |z|\le1 \},
$$
for every polynomial $p$ in one (complex) variable.

Using dilation theory (see \cite{SzN74}), for polynomials in two commuting  contractions, Ando \cite{And63} exhibited
an analogue inequality. However Varopoulos \cite{Var74} showed that von Neumann's
inequality cannot be extended for three or more commuting contractions.

It is an open problem of great interest in operator theory (see for example \cite{Ble01,Pis01}) to determine whether
there exists a constant $K (n)$ that adjusts von Neumann's inequality. More precisely, it is unknown whether
or not there exists a constant $K(n)$ such that
\begin{equation} \label{Pisier problem}
\|p(T_1,\dots,T_n)\|_{\mathcal L(\mathcal
H)}\le K(n) \; \sup\{ |p(z_1, \dots, z_n)| : |z_i| \leq 1 \},
\end{equation}
for every polynomial $p$ in $n$ variables and every $n$-tuple $(T_1, \dots, T_n)$ of commuting contractions in
$\mathcal L(\mathcal{H})$.

Dixon \cite{dixon1976neumann} studied the multivariable von Neumann's inequality restricted to homogeneous polynomials and \cite{mantero1979banach} studied some variations of this problem.
One of them is to determine the asymptotic behavior of the best possible constant $c(n)=c_{m,p,q}(n)$ such that
\begin{align*}
 \|P(T_1,\dots,T_n)\|_{\mathcal L(\mathcal H)}\le c(n)\|P\|_{\mathcal P(^m\ell_q^n)},
\end{align*}
for every $n$-tuple $T_1,\dots,T_n$  of commuting operators on a Hilbert space satisfying
\begin{align}
\sum_{i=1}^n\|T_i\|_{\mathcal L(\mathcal H)}^p\le 1, \label{Ip}
\end{align}
and any $m$-homogeneous polynomial on $n$ variables, $P$. Some lower bounds were proven there and also some upper bounds were given for the case $p=q$. We will apply the results of Section \ref{Mainresult} to show upper bounds for $c(n)$ for any $1\le p,q\le\infty$.

Recall that given a bilinear form $a: X_1 \times X_2 \to \mathbb{C}$ its uniform norm is $$\Vert a \Vert_{Bil(X_1 \times X_2)} : = \sup_{(x_1,x_2) \in B_{X_1} \times B_{X_2}} \vert a(x_1,x_2) \vert.$$
We need the following  lemma from \cite{mantero1979banach} which is an easy consequence of the Grothendieck inequality. We prove it for the sake of completeness.
\begin{lemma}
 For  $i=1,\dots,N$, $j=1,\dots,M$ let $x_i,y_j$ be vectors in some Hilbert space $\mathcal H$ such the $\sum_{i=1}^N \|x_i\|_{\mathcal H}^p\le1$ and $\sum_{j=1}^M  \|y_j\|_{\mathcal H}^p\le1$, and let $(a_{i,j})_{i,j}\in\mathbb C^{N \times M}$. Then
 \begin{align*}
  \left|\sum_{i,j}a_{ij}\langle x_i,y_j\rangle\right|\le K_G\|a\|_{Bil(\ell_p^N\times\ell_p^M)},
 \end{align*}
where $K_G$ denotes the Grothendieck constant and $a$ is the bilinear form on $\mathbb C^N\times\mathbb C^M$ whose coefficients are the $a_{ij}$'s.
\end{lemma}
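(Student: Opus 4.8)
The plan is to reduce the statement directly to the classical Grothendieck inequality by a rescaling argument. Set $\lambda_i:=\|x_i\|_{\mathcal H}$ and $\mu_j:=\|y_j\|_{\mathcal H}$. For indices with $\lambda_i=0$ (resp.\ $\mu_j=0$) the corresponding vector is zero, so those terms of $\sum_{i,j}a_{ij}\langle x_i,y_j\rangle$ vanish and may be discarded; for the remaining indices put $u_i:=x_i/\lambda_i$ and $v_j:=y_j/\mu_j$, which are unit vectors in $\mathcal H$. Since $\langle x_i,y_j\rangle=\lambda_i\mu_j\langle u_i,v_j\rangle$, one has
\[
\sum_{i,j}a_{ij}\langle x_i,y_j\rangle=\sum_{i,j}(a_{ij}\lambda_i\mu_j)\langle u_i,v_j\rangle.
\]

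Next I would invoke Grothendieck's inequality in its bilinear Hilbert-space form: for any scalar matrix $(b_{ij})$ and any unit vectors $u_i,v_j$ in a Hilbert space,
\[
\Big|\sum_{i,j}b_{ij}\langle u_i,v_j\rangle\Big|\le K_G\,\sup\Big\{\,\Big|\sum_{i,j}b_{ij}s_it_j\Big| : |s_i|\le 1,\ |t_j|\le 1\,\Big\}.
\]
Applying this with $b_{ij}=a_{ij}\lambda_i\mu_j$ yields
\[
\Big|\sum_{i,j}a_{ij}\langle x_i,y_j\rangle\Big|\le K_G\,\sup_{|s_i|,|t_j|\le 1}\Big|\sum_{i,j}a_{ij}\lambda_i\mu_j\,s_it_j\Big|.
\]

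The final step is to bound the supremum on the right by $\|a\|_{Bil(\ell_p^N\times\ell_p^M)}$. Given scalars with $|s_i|\le1$ and $|t_j|\le1$, set $s':=(\lambda_is_i)_{i=1}^N$ and $t':=(\mu_jt_j)_{j=1}^M$. Then $\|s'\|_p^p=\sum_i\lambda_i^p|s_i|^p\le\sum_i\lambda_i^p\le1$ and likewise $\|t'\|_p\le1$, so $s'\in B_{\ell_p^N}$ and $t'\in B_{\ell_p^M}$; moreover $\sum_{i,j}a_{ij}\lambda_i\mu_js_it_j=a(s',t')$. Hence the supremum is at most $\|a\|_{Bil(\ell_p^N\times\ell_p^M)}$, and combining this with the previous display gives the asserted estimate.

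There is no genuine obstacle here: the argument is entirely elementary once Grothendieck's inequality is at hand. The only points that require a little care are the bookkeeping for vectors of zero norm (handled by dropping the vanishing terms) and the observation that the $\ell_p$-constraints on $(x_i)_i$ and $(y_j)_j$ transfer, after the rescaling, to $\ell_p$-constraints on $s'$ and $t'$, which is immediate from $|s_i|,|t_j|\le1$.
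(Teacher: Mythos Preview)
Your proof is correct and follows essentially the same route as the paper: normalize the vectors, apply Grothendieck's inequality to the rescaled matrix $(a_{ij}\lambda_i\mu_j)$, and then absorb the norms $\lambda_i,\mu_j$ into the $\ell_p$-ball constraint. If anything, your write-up is slightly more careful than the paper's (you handle the zero-norm case explicitly and spell out the final absorption step), but the argument is identical.
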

\begin{proof}
 \begin{align*}
  \left|\sum_{i,j}a_{i,j}\langle x_i,y_j\rangle\right| & \le \left|\sum_{i,j}a_{i,j}\|x_i\|_{\mathcal H}\|y_j\|_{\mathcal H}\langle \frac{x_i}{\|x_i\|_{\mathcal H}},\frac{y_j}{\|y_j\|_{\mathcal H}}\rangle\right|  \\
  & \le K_G \sup\{\sum_{i,j}a_{i,j}\|x_i\|_{\mathcal H}\|y_j\|_{\mathcal H}\beta_i\gamma_j \; :\; \beta\in B_{\ell_\infty^N},\gamma\in B_{\ell_\infty^M}\} \\
  & \le K_G\|a\|_{Bil(\ell_p^N\times\ell_p^M)}.
 \end{align*}
\end{proof}

\begin{proposition}
 Let $T_1,\dots,T_n$  be commuting operators on a Hilbert space $\mathcal H$ satisfying (\ref{Ip}) and $P\in\mathcal P(^m\mathbb C^n)$. Then
 \begin{align*}
   \|P(T_1,\dots,T_n)\|_{\mathcal L(\mathcal H)} \le C A^{m-1}_{q,p'}(n)\|P\|_{\mathcal P(^m\ell_q^n)},
 \end{align*}
 where $C$ is constant independent of $n$.
\end{proposition}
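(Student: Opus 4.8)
The plan is to pair $P(T_1,\dots,T_n)$ against unit vectors, split off a single coordinate operator so that the resulting expression becomes bilinear, and then apply the Grothendieck-type lemma above. Write $T$ for the symmetric $m$-linear form associated to $P$; since the $T_i$ commute, $P(T_1,\dots,T_n)=\sum_{\mathbf i\in\mathcal{M}(m,n)}T(e_{i_1},\dots,e_{i_m})\,T_{i_1}\cdots T_{i_m}$. Fix $\xi,\eta$ in the unit ball of $\mathcal H$. Moving $T_{i_1}$ to the other side of the inner product,
\[
\langle P(T_1,\dots,T_n)\xi,\eta\rangle=\sum_{i_1=1}^n\ \sum_{\mathbf k\in\mathcal{M}(m-1,n)}T(e_{i_1},e_{k_1},\dots,e_{k_{m-1}})\,\langle x_{\mathbf k},y_{i_1}\rangle,
\]
with $x_{\mathbf k}:=T_{k_1}\cdots T_{k_{m-1}}\xi$ and $y_{i}:=T_i^{*}\eta$. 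By submultiplicativity of the operator norm and \eqref{Ip}, $\sum_{\mathbf k\in\mathcal{M}(m-1,n)}\|x_{\mathbf k}\|^p\le\big(\sum_i\|T_i\|^p\big)^{m-1}\|\xi\|^p\le1$ and $\sum_{i}\|y_i\|^p\le\big(\sum_i\|T_i\|^p\big)\|\eta\|^p\le1$. Therefore the lemma, applied to the bilinear form $a$ on $\mathbb C^{n^{m-1}}\times\mathbb C^n$ with coefficients $a_{\mathbf k,i}:=T(e_i,e_{k_1},\dots,e_{k_{m-1}})$, yields $|\langle P(T_1,\dots,T_n)\xi,\eta\rangle|\le K_G\,\|a\|_{Bil(\ell_p^{n^{m-1}}\times\ell_p^n)}$.

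Next I would identify this bilinear norm. Fixing $\beta\in B_{\ell_p^n}$ and maximizing over the $\ell_p^{n^{m-1}}$-variable gives $\big(\sum_{\mathbf k\in\mathcal{M}(m-1,n)}|T(\beta,e_{k_1},\dots,e_{k_{m-1}})|^{p'}\big)^{1/p'}=|S_\beta|_{p'}$, where $S_\beta$ is the symmetric $(m-1)$-linear form $S_\beta(w_2,\dots,w_m):=T(\beta,w_2,\dots,w_m)$; hence $\|a\|_{Bil(\ell_p^{n^{m-1}}\times\ell_p^n)}=\sup_{\beta\in B_{\ell_p^n}}|S_\beta|_{p'}$. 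Let $Q_\beta\in\mathcal P(^{m-1}\mathbb C^n)$ be the $(m-1)$-homogeneous polynomial associated to $S_\beta$. Combining the first inequality of \eqref{P vs T coef}, the definition of $A^{m-1}_{q,p'}(n)$, and the second inequality of \eqref{P vs T unif} (all for $(m-1)$-homogeneous objects in $n$ variables),
\[
|S_\beta|_{p'}\le|Q_\beta|_{p'}\le A^{m-1}_{q,p'}(n)\,\|Q_\beta\|_{\mathcal P(^{m-1}\ell_q^n)}\le A^{m-1}_{q,p'}(n)\,\|S_\beta\|_{\mathcal{L}(^{m-1}\ell_q^n)}.
\]

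Finally it remains to bound $\sup_{\beta\in B_{\ell_p^n}}\|S_\beta\|_{\mathcal{L}(^{m-1}\ell_q^n)}=\sup\{|T(\beta,w_2,\dots,w_m)|:\beta\in B_{\ell_p^n},\ w_2,\dots,w_m\in B_{\ell_q^n}\}$ by a constant (independent of $n$) times $\|P\|_{\mathcal P(^m\ell_q^n)}$. For this I would use the polarization identity for the symmetric form $T$, which expresses $T(\beta,w_2,\dots,w_m)$ as a linear combination, with coefficients depending only on $m$, of the values $P(\varepsilon_1\beta+\varepsilon_2w_2+\cdots+\varepsilon_mw_m)$, $\varepsilon_l=\pm1$; bounding each $|P(\cdot)|$ by $\|P\|_{\mathcal P(^m\ell_q^n)}$ times the $m$-th power of the $\ell_q^n$-norm of its argument then closes the estimate. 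Putting the three steps together and taking the supremum over $\xi,\eta$ in the unit ball gives $\|P(T_1,\dots,T_n)\|_{\mathcal{L}(\mathcal H)}\le C\,A^{m-1}_{q,p'}(n)\,\|P\|_{\mathcal P(^m\ell_q^n)}$ with $C=C(m)K_G$ independent of $n$. The main obstacle is precisely this last step: the single argument $\beta$ is measured in the $\ell_p^n$-norm while the remaining $m-1$ arguments are measured in the $\ell_q^n$-norm, so one must control $\|\beta\|_q$ for $\beta\in B_{\ell_p^n}$ (immediate when $B_{\ell_p^n}\subseteq B_{\ell_q^n}$, and the delicate point otherwise) while keeping all polarization constants free of $n$; the preceding two steps are routine bookkeeping around the lemma and the comparisons \eqref{P vs T coef}–\eqref{P vs T unif}.
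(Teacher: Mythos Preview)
Your approach is essentially identical to the paper's: split off one operator, apply the Grothendieck lemma to the resulting bilinear form on $\ell_p^{n^{m-1}}\times\ell_p^n$, and then control $\sup_{\beta\in B_{\ell_p^n}}|a_\beta|_{p'}$ by $A_{q,p'}^{m-1}(n)$ times a polarization-type bound on $\|P_\beta\|_{\mathcal P(^{m-1}\ell_q^n)}$. For that final step the paper simply cites Harris's inequality $\|P_\beta\|_{\mathcal P(^{m-1}\ell_q^n)}\le e\,\|P\|_{\mathcal P(^{m}\ell_q^n)}$, which---exactly as you flag---needs $\|\beta\|_q\le 1$ and hence $p\le q$; this hypothesis is satisfied in the subsequent corollary and remark, so the obstacle you single out is precisely the one the paper leaves implicit.
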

\begin{proof}

Let $a_{\mathbf i}$, $\mathbf i \in\mathcal M(m,n)$ be the coefficients of the symmetric $m$-linear  form $a$ associated to $P$, and let $x,y$ be unit vectors in $\mathcal H$. Note that we may also view $a$ as a bilinear form on $\mathbb C^{n^{m-1}}\times\mathbb C^n$, then by the previous lemma,
\begin{align*}
  \left|\sum_{\mathbf i \in\mathcal M(m,n)}a_{\mathbf i}\langle T_{i_1}\dots T_{i_m}x,y\rangle\right| & = \left|\sum_{(\mathbf i,j) \in\mathcal M(m-1,n)\times\{1,\dots,n\}} a_{(\mathbf i,j)}\langle T_{i_1}\dots T_{i_{m-1}}x,T_j^ *y\rangle\right|  \\
  & \le K_G \|a\|_{Bil(\ell_p^{n^{m-1}}\times\ell_p^n)} = K_G \sup_{\beta\in B_{\ell_p^n}}\left(\sum_{\mathbf i \in\mathcal M(m-1,n)}\left|\sum_{j=1}^na_{(\mathbf i,j)}\beta_j\right|^{p'}\right)^{1/{p'}}.
 \end{align*}
Note that $\sum_{j=1}^na_{(\mathbf i,j)}\beta_j$ are the coefficients of the $(m-1)$-linear form $a_\beta$ which is obtained by fixing one variable of $a$ at $\beta$, that is, $a_\beta(v_1,\dots,v_{m-1})=a(\beta,v_1,\dots,v_{m-1})$. Note also that the $p'$-norm of the coefficients of $a_\beta$ is less than or equal to the $p'$-norm of the coefficients of the associated polynomial $P_\beta$.
 Then, since $\|P_\beta\|_{\mathcal P(^{m-1}\ell_q^n)}\le e\|P\|_{\mathcal P(^{m}\ell_q^n)}$ (see for example \cite{harris1972bounds}),  taking supremum over $x,y\in B_{\mathcal H}$ we have
\begin{align*}
 \|P(T_1,\dots,T_n)\|_{\mathcal{L}(\mathcal{H})} \le K_G \sup_{\beta\in B_{\ell_p^n}} A^{m-1}_{q,p'}(n)\|P_\beta\|_{\mathcal P(^{m-1}\ell_q^n)}\le K_GeA^{m-1}_{q,p'}(n)\|P\|_{\mathcal P(^{m}\ell_q^n)}.
 \end{align*}
\end{proof}

\begin{remark}
 Taking $p=q$ and using Theorem~\ref{teorema con A} we recover the inequality proved in \cite{mantero1979banach}, that is, $c(n)\ll n^{\frac{m-2}{p'}}$ if $p\le2$ and $c(n)\ll n^{\frac{m-2}{2}}$ if $p\ge2$.

\end{remark}

We also have the following corollary.

\begin{corollary}

Let $T_1,\dots,T_n$  be commuting operators on a Hilbert space $\mathcal H$ satisfying (\ref{Ip}). If  $ [\frac{1}{2}\le\frac1{p'}\le \frac{m}{2(m-1)}-\frac1{q} ] \text{ or } [\frac{1}{p'} \leq \frac{1}{2} \; \wedge \; \frac{m-1}{q} \leq 1 - \frac{1}{p'} ],$ we have that
\begin{align*}
   \|P(T_1,\dots,T_n)\|_{\mathcal{L}(\mathcal{H})} \leq D \|P\|_{\mathcal P(^m\ell_q^n)},
\end{align*}
for every  $m$-homogeneous polynomial $P$, where $D$ is constant independent of $n$.
 \end{corollary}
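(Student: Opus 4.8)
The plan is to deduce the corollary directly from the preceding proposition together with case $(A)$ of Theorem~\ref{teorema con A}. The proposition gives the bound $\|P(T_1,\dots,T_n)\|_{\mathcal L(\mathcal H)}\le C A^{m-1}_{q,p'}(n)\|P\|_{\mathcal P(^m\ell_q^n)}$ with $C$ independent of $n$, so it suffices to show that under the stated hypotheses on $p'$ and $q$ we have $A^{m-1}_{q,p'}(n)\sim 1$. In other words, the whole content of the corollary is a bookkeeping check: the hypotheses are designed precisely so that the point $(\tfrac1{q},\tfrac1{p'})$ lies in region $(A)$ of Theorem~\ref{teorema con A} \emph{for the degree $m-1$}.

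Concretely, I would recall that case $(A)$ of Theorem~\ref{teorema con A}, applied with homogeneity degree $m-1$ (so $m$ there is replaced by $m-1$), says that $A^{m-1}_{s,t}(n)\sim 1$ whenever
$$
\tfrac12\le\tfrac1t\le\tfrac{(m-1)+1}{2(m-1)}-\tfrac1s=\tfrac{m}{2(m-1)}-\tfrac1s,
\qquad\text{or}\qquad
\tfrac1t\le\tfrac12\ \wedge\ \tfrac{m-1}{s}\le 1-\tfrac1t .
$$
Now substitute $s=q$ and $t=p'$. The first disjunct becomes $\tfrac12\le\tfrac1{p'}\le\tfrac{m}{2(m-1)}-\tfrac1q$ and the second becomes $\tfrac1{p'}\le\tfrac12\ \wedge\ \tfrac{m-1}{q}\le 1-\tfrac1{p'}$. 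These are exactly the two hypotheses listed in the statement of the corollary, so in either case $A^{m-1}_{q,p'}(n)\sim 1$, i.e.\ there is a constant $C'$ independent of $n$ with $A^{m-1}_{q,p'}(n)\le C'$.

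Putting the two facts together, for every $m$-homogeneous polynomial $P$ in $n$ complex variables and every $n$-tuple $T_1,\dots,T_n$ of commuting operators on $\mathcal H$ satisfying \eqref{Ip}, the proposition yields
$$
\|P(T_1,\dots,T_n)\|_{\mathcal L(\mathcal H)}\le C A^{m-1}_{q,p'}(n)\|P\|_{\mathcal P(^m\ell_q^n)}\le C C'\,\|P\|_{\mathcal P(^m\ell_q^n)},
$$
and one sets $D:=C C'$, which is independent of $n$. This completes the proof.

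The main (and essentially only) obstacle is the elementary but slightly error-prone verification that the degree shift $m\mapsto m-1$ carries the regions correctly: one must be careful that Theorem~\ref{teorema con A} is being invoked in degree $m-1$, not $m$, since the proposition reduces an $m$-linear estimate to an $(m-1)$-linear one. There is also the minor edge-case check that $m\ge 2$ (so that $m-1\ge 1$ and the quantity $\tfrac{m}{2(m-1)}$ makes sense); for $m=2$ the degree drops to linear forms, where $A^1_{q,p'}(n)\sim 1$ holds trivially whenever $p'\le q$, and one checks that both hypotheses collapse to exactly this condition. No new analytic input beyond what is already in the excerpt is required.
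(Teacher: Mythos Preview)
Your proof is correct and is exactly the argument the paper has in mind: the corollary is stated without proof precisely because it follows immediately from the preceding proposition together with case $(A)$ of Theorem~\ref{teorema con A} applied in degree $m-1$, and your bookkeeping verification of the region is accurate. One tiny slip in your side remark on the $m=2$ edge case: the condition for $A^1_{q,p'}(n)\sim 1$ is $p'\ge q'$ (equivalently $p\le q$), not $p'\le q$; but this does not affect the main argument.
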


Another variant studied in \cite{mantero1979banach} is to determine the  best possible constant $d(n)=d_{m,p,q}(n)$ such that
\begin{align*}
 \|P(T_1,\dots,T_n)\|_{\mathcal{L}(\mathcal{H})} \le d(n)\|P\|_{\mathcal P(^m\ell_q^n)},
\end{align*}
for every $m$-homogeneous polynomial in $n$ variables, $P$, and every  $n$-tuple $T_1,\dots,T_n$  of commuting operators on a Hilbert space $\mathcal H$ satisfying
\begin{align}
\left(\sum_{i=1}^n|\langle T_ix,y\rangle|^p\right)^{1/p}\le \|x\|_{\mathcal H} \|y\|_{\mathcal H}, \label{IIp}
\end{align}
for any vectors $x,y\in \mathcal H$. Note that (\ref{IIp}) is equivalent to $\|\sum_{i=1}^nT_i\beta_i\|_{\mathcal{L}(\mathcal{H})}\le\|\beta\|_{p'}$, for every $\beta\in\mathbb C^n$.
\begin{lemma}
 Let $T_1,\dots,T_n\in\mathcal L(\mathcal H)$ be operators satisfying (\ref{IIp}), and let $x,y\in \mathcal H$. Then if $Q$ is the $m$-homogeneous polynomial in $n$ variables defined by
 $$
 Q(z)=\sum_{\mathbf i \in\mathcal M(m,n)}\langle T_{i_1}\dots T_{i_m}x,y\rangle z_{i_1}\dots z_{i_m},
 $$
 we have $\displaystyle \|Q\|_{\mathcal P(^m\ell_{p'}^n)}\le\|x\|_{\mathcal{H}} \|y\|_{\mathcal{H}}$.
\end{lemma}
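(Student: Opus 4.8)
The plan is to observe that, for a fixed point $z=(z_1,\dots,z_n)$, the value $Q(z)$ is nothing but a matrix coefficient of the $m$-th power of the operator $S_z:=\sum_{i=1}^n z_iT_i$. Indeed, since the sum defining $Q$ runs over \emph{all} ordered $m$-tuples $\mathbf i\in\mathcal M(m,n)$, expanding the product gives
$$
S_z^m=\Big(\sum_{i_1=1}^n z_{i_1}T_{i_1}\Big)\cdots\Big(\sum_{i_m=1}^n z_{i_m}T_{i_m}\Big)=\sum_{\mathbf i\in\mathcal M(m,n)}z_{i_1}\cdots z_{i_m}\,T_{i_1}\cdots T_{i_m},
$$
whence $Q(z)=\langle S_z^m x,y\rangle$.

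Next I would estimate $|Q(z)|\le\|S_z^m\|_{\mathcal L(\mathcal H)}\,\|x\|_{\mathcal H}\|y\|_{\mathcal H}\le\|S_z\|_{\mathcal L(\mathcal H)}^m\,\|x\|_{\mathcal H}\|y\|_{\mathcal H}$, using the Cauchy--Schwarz inequality together with submultiplicativity of the operator norm. To control $\|S_z\|_{\mathcal L(\mathcal H)}$ I would invoke the reformulation of \eqref{IIp} recorded just above the statement, namely that \eqref{IIp} is equivalent to $\big\|\sum_{i=1}^n T_i\beta_i\big\|_{\mathcal L(\mathcal H)}\le\|\beta\|_{p'}$ for every $\beta\in\mathbb C^n$; applying this with $\beta=z$ yields $\|S_z\|_{\mathcal L(\mathcal H)}\le\|z\|_{p'}$.

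Combining the two bounds, for every $z$ with $\|z\|_{p'}\le1$ we obtain $|Q(z)|\le\|z\|_{p'}^m\,\|x\|_{\mathcal H}\|y\|_{\mathcal H}\le\|x\|_{\mathcal H}\|y\|_{\mathcal H}$, and taking the supremum over $z\in B_{\ell_{p'}^n}$ gives $\|Q\|_{\mathcal P(^m\ell_{p'}^n)}\le\|x\|_{\mathcal H}\|y\|_{\mathcal H}$, as claimed. There is essentially no serious obstacle here: the only points requiring a little care are the bookkeeping in the factorization of $S_z^m$ (it is crucial that $\mathcal M(m,n)$ consists of \emph{ordered} tuples, so that commutativity of the $T_i$ plays no role in this step), and recognizing that the dual exponent $p'$ appears precisely because condition \eqref{IIp} is, in its equivalent form, a statement about the norm of $\ell_{p'}$-combinations of the $T_i$.
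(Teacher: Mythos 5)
Your proof is correct and follows essentially the same route as the paper's: identify $Q(z)=\langle S_z^m x,y\rangle$ with $S_z=\sum_i z_iT_i$ by expanding the $m$-th power over ordered tuples, bound $|Q(z)|$ via Cauchy--Schwarz and submultiplicativity of the operator norm, and control $\|S_z\|_{\mathcal L(\mathcal H)}\le\|z\|_{p'}$ using the equivalent reformulation of \eqref{IIp}. Your remark that the step $\sum_{\mathbf i}z_{i_1}\cdots z_{i_m}T_{i_1}\cdots T_{i_m}=S_z^m$ holds because $\mathcal M(m,n)$ consists of ordered tuples (so commutativity is not needed there) is a nice point of care, and indeed the corresponding line in the paper, written as an inequality, is in fact an equality for exactly this reason.
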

\begin{proof}
 \begin{align*}
 \|Q\|_{\mathcal P(^m\ell_{p'}^n)} & = \sup_{z\in B_{\ell_{p'}^n}}\left|\sum_{\mathbf i \in\mathcal M(m,n)}\langle T_{i_1}\dots T_{i_m}x,y\rangle z_{i_1}\dots z_{i_m}\right| = \sup_{z\in B_{\ell_{p'}^n}}\left|\left\langle \sum_{\mathbf i \in\mathcal M(m,n)}T_{i_1}\dots T_{i_m}z_{i_1}\dots z_{i_m}x,y\right\rangle\right| \\
  & \le \sup_{z\in B_{\ell_{p'}^n}}\left|\left\langle \left(\sum_{l=1}^nz_lT_{l}\right)^mx,y\right\rangle\right|
   \le \sup_{z\in B_{\ell_{p'}^n}}\left\|\sum_{l=1}^nz_lT_{l}\right\|^m \|x\|_{\mathcal{H}} \|y\|_{\mathcal{H}} \le \|x\|_{\mathcal{H}} \|y\|_{\mathcal{H}}.
 \end{align*}

\end{proof}

\begin{proposition}
  Let $T_1,\dots,T_n$  be commuting operators on a Hilbert space $\mathcal H$ satisfying (\ref{IIp}) and $P\in\mathcal P(^m\mathbb C^n)$. Then
 \begin{align*}
   \|P(T_1,\dots,T_n)\|_{\mathcal L(\mathcal H)} \le  A^{m}_{q,r}(n)A^{m}_{p',r'}(n)\|P\|_{\mathcal P(^m\ell_q^n)}.
 \end{align*}
 \end{proposition}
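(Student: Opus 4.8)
The plan is to run the same Grothendieck--duality scheme as in the previous proposition, but now to pair the coefficients of $P$ against those of the auxiliary polynomial $Q$ produced by the preceding lemma. First I would write
\[
\|P(T_1,\dots,T_n)\|_{\mathcal L(\mathcal H)}=\sup_{\|x\|_{\mathcal H},\,\|y\|_{\mathcal H}\le 1}\bigl|\langle P(T_1,\dots,T_n)x,y\rangle\bigr|,
\]
and let $a$ be the symmetric $m$-linear form associated to $P$, with coefficients $a_{\mathbf i}=a(e_{i_1},\dots,e_{i_m})$, $\mathbf i\in\mathcal M(m,n)$. Since the $T_i$ commute, $P(T_1,\dots,T_n)=\sum_{\mathbf i\in\mathcal M(m,n)}a_{\mathbf i}\,T_{i_1}\cdots T_{i_m}$ (the sum over ordered multi-indices collapses, term by term, to $\sum_{\mathbf j\in\mathcal J(m,n)}c_{\mathbf j}T_{j_1}\cdots T_{j_m}$), and therefore
\[
\langle P(T_1,\dots,T_n)x,y\rangle=\sum_{\mathbf i\in\mathcal M(m,n)}a_{\mathbf i}\,\langle T_{i_1}\cdots T_{i_m}x,y\rangle=\sum_{\mathbf i\in\mathcal M(m,n)}a_{\mathbf i}\,b_{\mathbf i},
\]
where $b_{\mathbf i}:=\langle T_{i_1}\cdots T_{i_m}x,y\rangle$ is exactly the coefficient array of the polynomial $Q$ in the preceding lemma; note that, again because the $T_i$ commute, $(b_{\mathbf i})_{\mathbf i}$ is symmetric, so it is the coefficient array of the symmetric $m$-linear form associated to $Q$.

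Next I would apply H\"older's inequality over the index set $\mathcal M(m,n)$ with an arbitrary conjugate pair $\tfrac1r+\tfrac1{r'}=1$, obtaining
\[
\Bigl|\sum_{\mathbf i\in\mathcal M(m,n)}a_{\mathbf i}\,b_{\mathbf i}\Bigr|\le |a|_r\,|b|_{r'}.
\]
To bound the two factors I would invoke \eqref{P vs T coef}, which gives $|a|_r\le|P|_r$ and $|b|_{r'}\le|Q|_{r'}$, and then the very definition of the equivalence constants: $|P|_r\le A^m_{q,r}(n)\|P\|_{\mathcal P(^m\ell_q^n)}$ and $|Q|_{r'}\le A^m_{p',r'}(n)\|Q\|_{\mathcal P(^m\ell_{p'}^n)}$. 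Finally, the preceding lemma gives $\|Q\|_{\mathcal P(^m\ell_{p'}^n)}\le\|x\|_{\mathcal H}\|y\|_{\mathcal H}\le 1$ (this is where hypothesis \eqref{IIp} enters). Putting the inequalities together and taking the supremum over $x,y\in B_{\mathcal H}$ yields
\[
\|P(T_1,\dots,T_n)\|_{\mathcal L(\mathcal H)}\le A^m_{q,r}(n)\,A^m_{p',r'}(n)\,\|P\|_{\mathcal P(^m\ell_q^n)},
\]
valid for every $1\le r\le\infty$ (so one may in fact optimize the right-hand side over $r$).

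The argument is essentially routine; the only delicate points, in my view, are bookkeeping ones. First, one must check that the coefficient array $(b_{\mathbf i})$ of $Q$ is genuinely symmetric --- this is where commutativity of the $T_i$ is used a second time --- so that $|b|_{r'}$ really is the $r'$-coefficient norm of the symmetric form attached to $Q$ and \eqref{P vs T coef} applies to $Q$ as stated. Second, one should keep track of the directions of the inequalities in \eqref{P vs T coef} (namely $|T|_r\le|P|_r$), so that no spurious constant is introduced and the bound comes out with the clean product $A^m_{q,r}(n)A^m_{p',r'}(n)$. Beyond this, and the already available passage between the polynomial and multilinear pictures recorded in \eqref{P vs T coef}--\eqref{P vs T unif}, I do not expect any real obstacle.
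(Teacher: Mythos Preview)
Your proposal is correct and follows essentially the same route as the paper: pair the symmetric coefficients of $P$ against $b_{\mathbf i}=\langle T_{i_1}\cdots T_{i_m}x,y\rangle$, apply H\"older over $\mathcal M(m,n)$, and bound the two factors via $|a|_r\le|P|_r\le A^m_{q,r}(n)\|P\|_{\mathcal P(^m\ell_q^n)}$ and $|b|_{r'}\le|Q|_{r'}\le A^m_{p',r'}(n)\|Q\|_{\mathcal P(^m\ell_{p'}^n)}\le A^m_{p',r'}(n)$ using the preceding lemma. (One small remark: despite your opening line, no Grothendieck-type estimate is needed here---only H\"older---and indeed you never invoke one.)
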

\begin{proof}
 Let $a_{\mathbf i}$ be the coefficients of the symmetric $m$-linear  form $a$ associated to $P$ and $x,y$ unit vectors in $\mathcal H$. Then by the previous lemma and the fact that the $r$-norm of the coefficients of $a$ is less than or equal to the $r$-norm of the coefficients of the associated polynomial $P$, we have
 \begin{align*}
  \left|\sum_{\mathbf i \in\mathcal M(m,n)}a_{\mathbf i}\langle T_{i_1}\dots T_{i_m}x,y\rangle\right| & \le \left(\sum_{\mathbf i \in\mathcal M(m,n)}|a_{\mathbf i}|^r\right)^{1/r}\left(\sum_{\mathbf i \in\mathcal M(m,n)}|\langle T_{i_1}\dots T_{i_m}x,y\rangle|^{r'}\right)^{1/r'} \\
  & \le A^{m}_{q,r}(n)\|P\|_{\mathcal P(^m\ell_q^n)}A^{m}_{p',r'}(n).
 \end{align*}
\end{proof}

\begin{remark}
 Taking $p=q=r'$ and using Theorem~\ref{teorema con A} we recover the inequality proved in \cite[Proposition 20]{mantero1979banach}, that is $d(n)\ll n^{(m-1)(\frac1{p'}+\frac1{2})}$ if $p\le2$ and $d(n)\ll n^{(m-1)(\frac1{p}+\frac1{2})}$ if $p\ge2$. Note also that, in the last proposition,  we have bounds that do not depend on $n$ for some combinations of $p$ and $q$, e.g. for $(p,q)=(1,\infty)$.
\end{remark}

\newcommand{\etalchar}[1]{$^{#1}$}


\end{document}